\title{%
    Finite Markov chains coupled to general Markov processes and an application
    to metastability II
}
\author{%
    Thomas G. Kurtz\\
    University of Wisconsin-Madison
    \and
    Jason Swanson\thanks{%
        Supported in part by the VIGRE grant of University of Wisconsin-Madison
        and by NSA grant H98230-09-1-0079.
    }\\
    University of Central Florida
}
\date{January 6, 2021}
\begin{document}

\newtheorem{thm}{Theorem}[section]
\newtheorem{cor}[thm]{Corollary}
\newtheorem{prop}[thm]{Proposition}
\newtheorem{lemma}[thm]{Lemma}
\newtheorem{assum}[thm]{Assumption}
\newtheorem{condition}[thm]{Condition}
\theoremstyle{definition}
\newtheorem{defn}[thm]{Definition}
\newtheorem{expl}[thm]{Example}
\theoremstyle{remark}
\newtheorem{rmk}[thm]{Remark}
\numberwithin{equation}{section}

\def\al{\alpha}
\def\be{\beta}
\def\ga{\gamma}
\def\Ga{\Gamma}
\def\de{\delta}
\def\De{\Delta}
\def\ep{\varepsilon}
\def\eps{\varepsilon}
\def\ze{\zeta}
\def\th{\theta}
\def\ka{\kappa}
\def\la{\lambda}
\def\La{\Lambda}
\def\vpi{\varpi}
\def\si{\sigma}
\def\Si{\Sigma}
\def\ph{\varphi}
\def\om{\omega}
\def\Om{\Omega}

\def\wt{\widetilde}
\def\wh{\widehat}
\def\wch{\widecheck}
\def\ol{\overline}
\def\ds{\displaystyle}

\def\nab{\nabla}
\def\pa{\partial}
\def\To{\Rightarrow}
\def\eqd{\overset{d}{=}}
\def\emp{\emptyset}

\def\pf{\noindent{\bf Proof.} }
\def\qed{\hfill $\Box$}

\providecommand{\flr}[1]{\left\lfloor{#1}\right\rfloor}
\providecommand{\ceil}[1]{\left\lceil{#1}\right\rceil}
\providecommand{\ang}[1]{\left\langle{#1}\right\rangle}


\def\bA{\mathbb{A}}
\def\bB{\mathbb{B}}
\def\bC{\mathbb{C}}
\def\bD{\mathbb{D}}
\def\bE{\mathbb{E}}
\def\bF{\mathbb{F}}
\def\bG{\mathbb{G}}
\def\bH{\mathbb{H}}
\def\bI{\mathbb{I}}
\def\bJ{\mathbb{J}}
\def\bK{\mathbb{K}}
\def\bL{\mathbb{L}}
\def\bM{\mathbb{M}}
\def\bN{\mathbb{N}}
\def\bO{\mathbb{O}}
\def\bP{\mathbb{P}}
\def\bQ{\mathbb{Q}}
\def\bR{\mathbb{R}}
\def\bS{\mathbb{S}}
\def\bT{\mathbb{T}}
\def\bU{\mathbb{U}}
\def\bV{\mathbb{V}}
\def\bW{\mathbb{W}}
\def\bX{\mathbb{X}}
\def\bY{\mathbb{Y}}
\def\bZ{\mathbb{Z}}

\def\bfA{{\bf A}}
\def\bfB{{\bf B}}
\def\bfC{{\bf C}}
\def\bfD{{\bf D}}
\def\bfE{{\bf E}}
\def\bfF{{\bf F}}
\def\bfG{{\bf G}}
\def\bfH{{\bf H}}
\def\bfI{{\bf I}}
\def\bfJ{{\bf J}}
\def\bfK{{\bf K}}
\def\bfL{{\bf L}}
\def\bfM{{\bf M}}
\def\bfN{{\bf N}}
\def\bfO{{\bf O}}
\def\bfP{{\bf P}}
\def\bfQ{{\bf Q}}
\def\bfR{{\bf R}}
\def\bfS{{\bf S}}
\def\bfT{{\bf T}}
\def\bfU{{\bf U}}
\def\bfV{{\bf V}}
\def\bfW{{\bf W}}
\def\bfX{{\bf X}}
\def\bfY{{\bf Y}}
\def\bfZ{{\bf Z}}

\def\cA{\mathcal{A}}
\def\cB{\mathcal{B}}
\def\cC{\mathcal{C}}
\def\cD{\mathcal{D}}
\def\cE{\mathcal{E}}
\def\cF{\mathcal{F}}
\def\cG{\mathcal{G}}
\def\cH{\mathcal{H}}
\def\cI{\mathcal{I}}
\def\cJ{\mathcal{J}}
\def\cK{\mathcal{K}}
\def\cL{\mathcal{L}}
\def\cM{\mathcal{M}}
\def\cN{\mathcal{N}}
\def\cO{\mathcal{O}}
\def\cP{\mathcal{P}}
\def\cQ{\mathcal{Q}}
\def\cR{\mathcal{R}}
\def\cS{\mathcal{S}}
\def\cT{\mathcal{T}}
\def\cU{\mathcal{U}}
\def\cV{\mathcal{V}}
\def\cW{\mathcal{W}}
\def\cX{\mathcal{X}}
\def\cY{\mathcal{Y}}
\def\cZ{\mathcal{Z}}

\def\sA{\mathscr{A}}
\def\sB{\mathscr{B}}
\def\sC{\mathscr{C}}
\def\sD{\mathscr{D}}
\def\sE{\mathscr{E}}
\def\sF{\mathscr{F}}
\def\sG{\mathscr{G}}
\def\sH{\mathscr{H}}
\def\sI{\mathscr{I}}
\def\sJ{\mathscr{J}}
\def\sK{\mathscr{K}}
\def\sL{\mathscr{L}}
\def\sM{\mathscr{M}}
\def\sN{\mathscr{N}}
\def\sO{\mathscr{O}}
\def\sP{\mathscr{P}}
\def\sQ{\mathscr{Q}}
\def\sR{\mathscr{R}}
\def\sS{\mathscr{S}}
\def\sT{\mathscr{T}}
\def\sU{\mathscr{U}}
\def\sV{\mathscr{V}}
\def\sW{\mathscr{W}}
\def\sX{\mathscr{X}}
\def\sY{\mathscr{Y}}
\def\sZ{\mathscr{Z}}

\def\fA{\mathfrak{A}}
\def\fB{\mathfrak{B}}
\def\fC{\mathfrak{C}}
\def\fD{\mathfrak{D}}
\def\fE{\mathfrak{E}}
\def\fF{\mathfrak{F}}
\def\fG{\mathfrak{G}}
\def\fH{\mathfrak{H}}
\def\fI{\mathfrak{I}}
\def\fJ{\mathfrak{J}}
\def\fK{\mathfrak{K}}
\def\fL{\mathfrak{L}}
\def\fM{\mathfrak{M}}
\def\fN{\mathfrak{N}}
\def\fO{\mathfrak{O}}
\def\fP{\mathfrak{P}}
\def\fQ{\mathfrak{Q}}
\def\fR{\mathfrak{R}}
\def\fS{\mathfrak{S}}
\def\fT{\mathfrak{T}}
\def\fU{\mathfrak{U}}
\def\fV{\mathfrak{V}}
\def\fW{\mathfrak{W}}
\def\fX{\mathfrak{X}}
\def\fY{\mathfrak{Y}}
\def\fZ{\mathfrak{Z}}

\maketitle

\begin{abstract}
    We consider a diffusion given by a small noise perturbation of a dynamical
    system driven by a potential function with a finite number of local minima.
    The  classical results of Freidlin and Wentzell show that the time this
    diffusion spends in the domain of attraction of one of these local minima is
    approximately exponentially distributed and hence the diffusion should
    behave approximately like a Markov chain on the local minima. By the work of
    Bovier and collaborators, the local minima can be associated with the small
    eigenvalues of the diffusion generator. In Part I of this work
    \cite{Kurtz2020}, by applying a Markov mapping theorem, we used the
    eigenfunctions of the generator to couple this diffusion to a Markov chain
    whose generator has eigenvalues equal to the eigenvalues of the diffusion
    generator that are associated with the local minima and established explicit
    formulas for conditional probabilities associated with this coupling. The
    fundamental question now becomes to relate the coupled Markov chain to the
    approximate Markov chain suggested by the results of Freidlin and Wentzel.
    In this paper, we take up this question and provide a complete analysis of
    this relationship in the special case of a double-well potential in one
    dimension.

    \bigskip

    \noindent {\bf AMS subject classifications:} Primary 60J60; secondary 60H10,
    60F10, 60J27, 60J28, 34L10

    \noindent {\bf Keywords and phrases:} conditional distributions, coupling,
    eigenfunctions, Freidlin and Wentzell, Markov mapping theorem, Markov
    processes, metastability
\end{abstract}

\section{Introduction}\label{S:intro}

In the interest of self-containment, we will first recap the essential
definitions from Part I of this work \cite{Kurtz2020}. Fix $\ep>0$ and consider
the stochastic process,
\begin{equation}\label{SDE}
    X_{\ep}(t) = X_{\ep}(0)
        - \int_0^t \nab F(X_{\ep}(s)) \,ds + \sqrt {2\ep}\,W(t),
\end{equation}
where $F\in C^3(\bR^d)$ and $W$ is a standard $d$-dimensional Brownian motion.
Let $\ph$ be the solution to the differential equation $\ph'=-\nab F(\ph)$. We
will use $\varphi_x$ to denote the solution with  $\ph_x(0)=x$. The process
$X_{\ep}$ is a small-noise perturbation of the deterministic process $\ph$.

Suppose $F\in C^3(\bR^d)$ and $\lim_{|x|\to\infty}F(x)=\infty$, and that $\cM=
\{x_0,\ldots ,x_m\}$ is the set of local minima of the $F$, with $m\ge 1$. The
points $x_j$ are stable points for the process $\ph$. For $X_{\ep}$, however,
they are not stable. The process $X_{\ep}$ will initially gravitate toward one
of the $x_j$ and move about randomly in a small neighborhood of this point. But
after an exponential amount of time, a large fluctuation of the noise term will
move the process $X_{\ep}$ out of the domain of attraction of $x_j$ and into the
domain of attraction of one of the other minima. We say that each point $x_j$ is
a point of \textit{metastability} for the process $X_{\ep}$.

If $X$ is a cadlag process in a complete, separable metric space $S$ adapted to
a right continuous filtration (assumptions that are immediately satisfied for
all processes considered here) and $H$ is either open or closed, then
$\tau^X_H=\inf\{t>0:X(t)\text{ or }X(t-)\in H\}$ is a stopping time (see, for
example, \cite[Proposition 1.5]{EK}). If $x\in S$, let
$\tau^X_x=\tau^X_{\{x\}}$.  We may sometimes also write $\tau^X(H)$, and if the
process is understood, we may omit the superscript.

Let
\begin{equation}\label{domatt}
    D_j = \{x \in \bR^d: \lim_{t\to\infty}\ph_x(t) = x_j\}
\end{equation}
be the domains of attraction of the local minima. It is well-known (see, for
example, \cite{FW}, \cite[Theorem 3.2]{BEGK}, \cite[Theorems 1.2 and 1.4]{BGK},
and \cite{E}) that as $\ep \to 0$, $\tau^{X_{\ep}}(D_j^c)$ is asymptotically
exponentially distributed under $P^{x_j}$.  It is therefore common to
approximate the process $X_{\ep}$ by a continuous time Markov chain on the set
$\cM$ (or equivalently on $\{0,\ldots ,m\}$).

In this project, for each $\ep>0$, we wish to capture this approximate Markov
chain behavior by coupling $X_{\ep}$ to a continuous time Markov chain,
$Y_{\ep}$, on $\{0,\ldots ,m\}$. We refer to the indexed collection of
coupled processes, $\{(X_{\ep},Y_{\ep}):\ep>0\}$ as a \textit{coupling
sequence}.

In \cite{Kurtz2020}, we developed a general coupling procedure that goes
beyond the specific case of interest here. It is a construction that builds a
coupling between a Markov process on a complete and separable metric space and a
continuous-time Markov chain where the generators of the two processes have
common eigenvalues. The coupling is done in such a way that observations of the
chain yield quantifiable conditional probabilities about the process.

We then applied this construction to the special case of a reversible diffusion
on $\bR^d$ driven by a potential function and a small white noise perturbation.
We summarize here the results in this special case. Assume there exist constants
$a_i>0$ and $c_i>0$ such that $a_2<2a_1-2$, and  
\begin{align}
    c_1|x|^{a_1} - c_2 &\le |\nab F(x)|^2 \le c_3|x|^{a_2}+c_4,
        \label{superquad1}\\
    c_1|x|^{a_1} - c_2
        &\le (|\nab F(x)| - 2\De F(x))^2 \le c_3|x|^{a_2}+c_4.
        \label{superquad2}
\end{align}
Let
\[
    A = \{(f, -\ep\wt Hf): f \in C_c^\infty(\bR^d)\}
\]
be the generator for \eqref{SDE}, and let $(-\la_0,\eta_0),\ldots,
(-\la_m,\eta_m)$ be the first $m+1$ eigenvalues and eigenfunctions of $A$.
By \cite[Proposition 3.7]{Kurtz2020}, the functions $\eta_k$ are continuous and
bounded. We may therefore choose a matrix, $Q\in\bR^{(m+1)\times(m+1)}$, and
vectors, $\xi^{(1)},\ldots,\xi^{(m)}$, such that
\begin{enumerate}[(i)]
    \item $Q$ is the generator of a continuous-time Markov chain with state
    space $E_0=\{0, 1, \ldots, m\}$,

    \item $\xi^{(k)}$ is a right eigenvector of $Q$ with eigenvalue $-\la_k$,
    and

    \item for $0\le i\le m$, the functions,
    \[
        \al_i(x) = 1 + \sum_{k=1}^m\xi_i^{(k)}\eta_k(x),
    \]
    are strictly positive.
\end{enumerate}
We then choose a probability measure, $p=(p_0,\ldots,p_m)$, on $E_0$, define the
measure $\nu$ on $\bR^d\times E_0$ by
\begin{equation}\label{nudef}
    \nu(\Ga \times \{i\})
        = p_i\al(i,\Ga), \quad \Ga \in \cB(\bR^d),
\end{equation}
and let $(X_\ep,Y_\ep)$ be the cadlag Markov process on $\bR^d\times E_0$ with
initial
distribution $\nu$ and generator,
\begin{equation}\label{Bgen}
    Bf(x,i) = Af(x,i)
        + \sum_{j\ne i}Q_{ij}\frac {\al_j(x)}{\al_i(x)}(x)(f(x,j) - f(x,i)).
\end{equation}
Note that all of these objects ($A$, $\la_k$, $\eta_k$, $Q$, $\xi^{(k)}$, $p$,
and so on) depend on $\ep$, though this dependence is suppressed in the notation
for readability.

By \cite[Theorem 3.8]{Kurtz2020}, the process $X_\ep$ solves \eqref{SDE}, the
process $Y_\ep$ has generator $Q$, and
\begin{equation}\label{mainRd}
    P(X(t) \in \Ga \mid Y(t) = j) = \int_\Ga \al_j(x)\,\vpi(dx),
\end{equation}
for all $t\ge0$, all $0\le j\le m$, and all $\Ga\in\cB(E)$.

In this way, for each $\ep>0$, we create a coupling, $(X_\ep,Y_\ep)$. We
referred to the indexed collection of coupled processes, $\{(X_{\ep},Y_
{\ep}):\ep>0\}$, as a \textit{coupling sequence}. Our objective is to
investigate the possibility of constructing a coupling sequence which satisfies
both
\begin{equation}\label{tracking-req}
    P(X_{\ep}(t) \in D_j \mid Y_{\ep}(t) = j) \to 1
\end{equation}
and
\begin{equation}\label{rate-match-req}
    E^i[\tau_j^{Y_\ep}] \sim E^{x_i}[\tau^{X_\ep}_{B_{\rho}(x_0)}]
\end{equation}
as $\ep\to0$, for all $i$ and $j$, where $B_{\rho}(x)$ is the ball of radius
$\rho$ centered at $x$.

In the current paper, we consider this question in the case of a double-well
potential in one dimension. That is, suppose $d=1$ and $\cM=\{x_0,x_1\}$, where
$x_0<0<x_1$. Let $F$ be decreasing on $(-\infty ,x_0)$ and $(0,x_1)$, and
increasing on $(x_0,0)$ and $(x_1,\infty )$, and satisfy $F(x_0)<F(x_1)$. Then
the domains of attraction are $D_0=(-\infty ,0)$ and $D_1=(0,\infty )$. There
are many possible coupling sequences, so for each such sequence, we can ask if
it satisfies any of the following:
\begin{align}
    &P(X_\ep(t) < 0 \mid Y_\ep(t) = 0) \to 1,
        \label{trackdeep}\\
    &P(X_\ep(t) > 0 \mid Y_\ep(t) = 1) \to 1,
        \label{trackshallow}\\
    &E^1[\tau^{Y_\ep}_0] \sim E^{x_1}[\tau^{X_\ep}_{B_{\rho}(x_0)}],
        \label{time2deep}\\
    &E^0[\tau^{Y_\ep}_1] \sim E^{x_0}[\tau^{X_\ep}_{B_{\rho}(x_1)}]
        \label{time2shallow},
\end{align}
as $\ep\to 0$, where $0<\rho <|x_0|\wedge x_1$.

Let $-\la_\ep$ be the second eigenvalue of the generator of $X_\ep$. It is known
(see, for example, \cite{Sugiura1995,Sugiura2001} or  \cite{BEGK,BGK}), that in
\eqref{evasym} and \eqref{ttasym}, we have
\begin{align*}
    E^{x_1}[\tau^{X_\ep}_{B_\rho(x_0)}]
        &\sim \frac {2\pi}{|F''(0)F''(x_1)|^{1/2}} e^{(F(0)-F(x_1))/\ep}
        \sim \frac 1{\la_\ep},\\
    E^{x_0}[\tau^{X_\ep}_{B_\rho(x_1)}]
        &\sim \frac {2\pi}{|F''(0)F''(x_0)|^{1/2}} e^{(F(0)-F(x_0))/\ep}.
\end{align*}
Thus, \eqref{time2deep} and \eqref{time2shallow} are equivalent to
\eqref{t-todeep} and \eqref{t-fromdeep}, respectively. Moreover, Theorem
\ref{T:halfmain} shows that, in our coupling construction, \eqref{trackdeep} is
equivalent to the assertion that, given $Y(t)=0$, the distribution of $X(t)$ is
asymptotically equivalent to the stationary distribution, conditioned to be on
$(-\infty ,0)$. Theorem \ref{T:halfmain2} gives the analogous equivalency for
\eqref{trackshallow}.

In Section \ref{S:asympcoup}, we will show that, in our coupling construction,
\eqref{trackshallow} implies \eqref{time2deep}, which implies \eqref{trackdeep},
and \eqref{time2shallow} implies \eqref{time2deep}. We also show by example that
there are no other implications among these conditions. For example, we can
couple $X_\ep$ and $Y_\ep$ so that \eqref{trackdeep}, \eqref{time2deep}, and
\eqref{time2shallow} are satisfied, but \eqref{trackshallow} is not. In other
words, it is possible to build the Markov chain with asymptotically the same
transition rates as the process, but the two do not remain synchronized, in the
sense that \eqref{trackshallow} fails. Or, as another example, we can couple the
processes so that \eqref{trackdeep}-\eqref{time2deep} are satisfied, but
\eqref{time2shallow} is not. In other words, we can have a coupling where the
Markov chain accurately tracks the diffusion, but the transition rates of the
two processes are not the same.

In the case of the double-well potential, for fixed $\ep>0$, the dynamics of the
coupling $(X_\ep,Y_\ep)$ are uniquely determined by two parameters,
$\xi_{1,\ep}$ and $\xi_{2,\ep}$ (see Lemma \ref{L:matcond}). If we identify
coupling sequences whose parameters are asymptotically equivalent as $\ep\to 0$,
then there is a unique coupling sequence satisfying
\eqref{trackdeep}-\eqref{time2shallow}. Heuristically, we build this sequence by
choosing the $\xi$'s so that $\al_j\approx c_{j,\ep}1_{D_j}$. More specifically,
we choose them so that $\al_0=-\eta_1 /\eta_1(\infty)+1$ and
$\al_1=\eta_1/|\eta_1(-\infty)|+1$. We then prove sharp enough bounds on the
behavior of $\eta_1$ to show that the approximation $\al_j\approx
c_{j,\ep}1_{D_j}$ is sufficiently accurate.

The outline of the paper is as follows. In Section \ref{S:order}, we address the
issue of how the minima should be ordered so that they correspond to the
eigenvalues of the generator of the diffusion. This is a necessary prerequisite
for attaining the asymptotic behavior in \eqref{tracking-req} and 
\eqref{rate-match-req}. In Section \ref{S:eigstruc}, we specialize to the case
of the double-well potential in $d=1$. We begin there with the study the
structure of the second eigenfunction. In particular, we narrow down the
location of the nodal point, show that the eigenfunction is asymptotically flat
near the minima, and establish key estimates on the behavior of the
eigenfunction near the saddle point. Then, in Section \ref{S:asympcoup}, we use
these results to give a complete analysis of our coupling sequences for the
double-well potential.

\section{Ordering the local minima}\label{S:order}

Heretofore, no mention has been made of the order in which the local minima,
$\cM = \{x_0,\ldots,x_m\}$, are listed. No particular order is necessary in
order to construct a coupling sequence. But if that sequence is to exhibit the
behavior in \eqref{tracking-req} and \eqref{rate-match-req}, then the minima
should be ordered so that they correspond with the eigenvalues of $A$.

To describe this ordering, we first establish some notation and terminology. For
any two sets $A,B\subset\bR^d$, define the set of paths from $A$ to $B$ as
\[
    \cP^*(A,B) = \{\om \in C([0,1];\bR^d): \om(0) \in A, \, \om(1) \in B\}.
\]
Given $F:\bR^d\to\bR$, the height of the saddle, or communication height,
between $A$ and $B$ is defined as
\[
    \wh F(A,B) = \inf_{\om\in\cP^*(A,B)} \sup_{t\in [0,1]} F(\om(t)).
\]
The set of minimal paths from $A$ to $B$ is
\[
    \cP(A,B) = \{\om\in\cP^*(A,B): \sup_{t\in [0,1]} F(\om(t)) = \wh F(A,B)\}.
\]
A gate, $G(A,B)$, is a minimal subset of $\{z\in\bR^d:F(z)=\wh F(A,B)\}$ such
that all minimal paths intersect $G(A,B)$. In general, $G(A,B)$ is not unique.
The set of saddle points, $\cS(A,B)$, is the union of all gates.

\begin{assum}\label{asmF2}
    \begin{enumerate}[(i)]
        \item   For $x,y\in\cM$, $G(x,y)$ is unique and consists of a finite set
                of isolated points $\{z_i^*(x,y)\}$.
        \item   The Hessian matrix of $F$ is non-degenerate at each $x\in\cM$
                and at each saddle point $z_i^*(x,y)$.
        \item   The minima $\cM=\{x_0,\ldots,x_m\}$ can be labeled in such a way
                that, with $\cM_k=\{x_0,\ldots,x_k\}$, each saddle point
                $z^*(x_k,\cM_{k-1})$ is unique, the Hessian matrix of $F$ at
                $z^*(x_k,\cM_{k-1})$ is non-degenerate, and
                \begin{equation}\label{sadd}
                    \wh F(x_k,\cM_k\setminus x_k) - F(x_k)
                        < \wh F(x_i,\cM_k\setminus x_i) - F(x_i),
                \end{equation}
                for all $0\le i<k\le m$.
    \end{enumerate}
\end{assum}

We shall assume our potential function $F$ satisfies Assumption \ref{asmF2}, and
that the minima are ordered as in (iii).

\section{Structure of the second eigenfunction}\label{S:eigstruc}

\subsection{Tools and preliminary results}

From this point forward, we take $d=1$. Note that
$\ep\eta_k''-F'\eta_k'=-\la_k\eta_k$ for all integers $k\ge 0$. We will make use
of the fact that the eigenfunctions satisfy the integral equations in the
following lemma.

\begin{lemma}\label{L:eigenintrep}
    For any $k\in\bN$,
    \begin{align}
        \eta_k(x) &= \eta_k(\infty)
            - \frac {\la_k}\ep\int_x^\infty\int_x^u
            \exp\left({\frac{F(v)-F(u)}\ep}\right)\eta_k(u)\,dv\,du
            \label{eigenintrep1}\\
        &= \eta_k(-\infty)
            -\frac{\la_k}\ep\int_{-\infty}^x\int_u^x
            \exp\left({\frac{F(v)-F(u)}\ep}\right)\eta_k(u)\,dv\,du.
            \label{eigenintrep2}
    \end{align}
\end{lemma}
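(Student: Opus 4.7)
The plan is to derive both representations by rewriting the eigenvalue equation $\ep\eta_k''-F'\eta_k'=-\la_k\eta_k$ as a first-order relation in divergence form and then integrating twice, using Fubini to recast the iterated integrals in the asserted form.

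First I would observe that the identity
\[
    \frac d{du}\bigl(\eta_k'(u)\,e^{-F(u)/\ep}\bigr)
    =\frac 1\ep\bigl(\ep\eta_k''(u)-F'(u)\eta_k'(u)\bigr)e^{-F(u)/\ep}
    =-\frac{\la_k}\ep\,\eta_k(u)\,e^{-F(u)/\ep}
\]
is the key structural fact. Integrating this identity over $[x,R]$ and letting $R\to\infty$, the left-hand side becomes $\eta_k'(\infty)e^{-F(\infty)/\ep}-\eta_k'(x)e^{-F(x)/\ep}$, while the right-hand side converges because $\eta_k$ is bounded (by Proposition 3.7 of Part I) and $F(u)\to\infty$ under the assumption $\lim_{|x|\to\infty}F(x)=\infty$ together with the super-quadratic growth hypothesis \eqref{superquad1}. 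This yields the first-order representation
\[
    \eta_k'(x)=\frac{\la_k}\ep\int_x^\infty \eta_k(u)\exp\!\Bigl(\frac{F(x)-F(u)}\ep\Bigr)\,du,
\]
once one knows $\eta_k'(u)e^{-F(u)/\ep}\to 0$ as $u\to\infty$.

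Next I would integrate this formula for $\eta_k'(v)$ over $[x,\infty)$. Writing
\[
    \eta_k(\infty)-\eta_k(x)
    =\int_x^\infty\eta_k'(v)\,dv
    =\frac{\la_k}\ep\int_x^\infty\!\int_v^\infty \eta_k(u)\exp\!\Bigl(\frac{F(v)-F(u)}\ep\Bigr)du\,dv,
\]
and applying Tonelli to the region $\{x\le v\le u<\infty\}$ converts the inner iterated integral to $\int_x^\infty\!\int_x^u$, yielding \eqref{eigenintrep1}. Formula \eqref{eigenintrep2} follows by the mirror-image argument on $(-\infty,x]$, integrating the same divergence identity from $-\infty$ to $x$ and then integrating $\eta_k'$ from $-\infty$ to $x$.

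The main obstacle, and the only nontrivial point, is the justification of the boundary terms: namely that $\eta_k'(u)e^{-F(u)/\ep}\to 0$ as $|u|\to\infty$ and that $\eta_k(\pm\infty)$ exist as finite limits. The first of these follows from the divergence identity: the function $u\mapsto \eta_k'(u)e^{-F(u)/\ep}$ is monotone on each half-line minus a bounded set (its derivative has the sign of $-\la_k\eta_k(u)e^{-F(u)/\ep}$, which has constant sign once $u$ is past the nodal point, at least for $k=1$; for general $k$ one uses boundedness of $\eta_k$ against the integrability of $e^{-F/\ep}$ guaranteed by the growth assumptions), so it has a limit, and the limit must be $0$ because $\eta_k\,e^{-F/\ep}$ is integrable and the decay rate $e^{-F(u)/\ep}$ forces any nonzero limit of the product to be incompatible with integrability of its derivative. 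Once this limit is established, existence of $\eta_k(\pm\infty)$ follows by applying Tonelli to the (now convergent) double integral and observing that $\eta_k(x)$ differs from a quantity that converges as $x\to\pm\infty$ by an absolutely convergent tail integral. All remaining manipulations are routine applications of Fubini/Tonelli on the monotone region $\{v\le u\}$ (respectively $\{u\le v\}$).
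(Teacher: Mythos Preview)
Your approach is correct and is essentially the ``forward'' version of the paper's argument: you integrate the ODE twice in divergence form to derive the representation, whereas the paper defines $y(x)$ to be the right-hand side of \eqref{eigenintrep1}, differentiates twice to check that $\ep y'' - F'y' = -\la_k\eta_k$, and concludes that $y-\eta_k$ is an eigenfunction for $\la_0=0$, hence constant. The paper's route has the virtue that it sidesteps the need to argue directly that $\eta_k'(u)e^{-F(u)/\ep}\to 0$ or that $\eta_k(\pm\infty)$ exist: once $y$ is shown to satisfy the same equation and $y(\infty)=\eta_k(\infty)$ (which falls out of the convergence of the double integral), these facts are obtained as corollaries rather than as prerequisites. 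Your handling of the boundary term is fine (the derivative of $\eta_k'(u)e^{-F(u)/\ep}$ is absolutely integrable by boundedness of $\eta_k$ and integrability of $e^{-F/\ep}$, hence the limit exists; a nonzero limit would force $|\eta_k'|\to\infty$, contradicting boundedness of $\eta_k$), so neither method is strictly superior.

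The one place where you should add detail is the absolute convergence of the double integral $\int_x^\infty\int_x^u e^{(F(v)-F(u))/\ep}\,dv\,du$, which you need both to justify Fubini (your integrand changes sign, so Tonelli alone does not suffice) and to conclude that $\eta_k(\pm\infty)$ exist. This is not entirely trivial: the paper obtains it by applying L'H\^opital's rule together with the growth bound \eqref{superquad1} to show that $e^{-F(u)/\ep}\int_x^u e^{F(v)/\ep}\,dv \le C u^{-\al}$ for large $u$ and some $\al>1$. Your phrase ``absolutely convergent tail integral'' is where this estimate needs to go.
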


\begin{proof}
    Fix $k\in\bN$. Since $\eta_k$ is bounded by Proposition \cite[Proposition
    3.7]{Kurtz2020}, we may choose $C_1>0$ such that $|\eta_k(x)|\le C_1$ for
    all $x\in\bR$. Now fix $x\in\bR$. Since $a_1>2$, we may choose
    $\al\in(1,a_1/2)$. By \cite[Lemma 3.3]{Kurtz2020}, assumptions 
    \eqref{superquad1} and \eqref{superquad2} imply that
    \begin{equation}\label{superquad3}
        \wt c_1|x|^{\wt a_1} - \wt c_2 \le |F(x)|
            \le \wt c_3|x|^{\wt a_2} + \wt c_4,
    \end{equation}
    where $\wt a_i = a_i/2+1$. It follows that 
    $\lim_{u\to\infty}u^{-\al}e^{F(u)/\ep}=\infty$. Also by \eqref{superquad1},
    for $u$ sufficiently large, $|u^{-\al}F'(u)|\ge C|u|^{a_1/2-\al}$ for some
    $C>0$. Hence, by L'H\^optal's rule,
    \[
        \lim_{u\to\infty}\frac{\int_x^u e^{F(v)/\ep}\,dv}{u^{-\al}e^{F(u)/\ep}}
            = \lim_{u\to\infty}\frac 1{-\al u^{-(\al+1)} + u^{-\al}F'(u)} = 0,
    \]
    and so we may choose $C_2>0$ such that $\int_x^u e^{F(v)/\ep}\,dv\le C_2
    u^{-\al}e^{F(u)/\ep}$ for all $u\ge x$. Therefore,
    \[
        \int_x^{\infty}\int_x^u \left|{
                \exp\left({\frac{F(v)-F(u)}\ep}\right)\eta_k(u)
            }\right|\,dv\,du \le C_1C_2\int_x^\infty u^{-\al}\,du < \infty,
    \]
    and so the right-hand side of \eqref{eigenintrep1} is well-defined.  

    Let
    \[
        y(x) = \eta_k(\infty)
            - \frac {\la_k}\ep\int_x^\infty\int_x^u
            \exp\left({\frac{F(v)-F(u)}\ep}\right)\eta_k(u)\,dv\,du.
    \]
    Then
    \[
        y'(x) = \frac{\la_k}\ep\int_x^{\infty}
            \exp\left({\frac{F(x)-F(u)}\ep}\right)\eta_k(u)\,du,
    \]
    and
    \[
        y''(x) = -\frac{\la_k}\ep\eta_k(x)
            + F'(x)\frac{\la_k}{\ep^2}\int_x^\infty
            \exp\left({\frac{F(x)-F(u)}\ep}\right)\eta_k(u)\,du.
    \]
    Thus, $\ep y''-F'y'=-\la_k\eta_k=\ep\eta_k''-F'\eta_k'$, so that $y-\eta_k$
    is an eigenfunction corresponding to $\la_0$. That is, $y$ and $\eta_k$
    differ by a constant. But $y(\infty)=\eta_k(\infty)$, so $y=\eta_k$ and this
    proves \eqref{eigenintrep1}.

    By replacing $F$ with $x\mapsto F(-x)$, equation \eqref{eigenintrep1} gives
    \[
        \eta_k(-x) = \eta_k(-\infty)
            - \frac{\la_k}\ep\int_x^\infty\int_x^u
            \exp\left({\frac{F(-v)-F(-u)}\ep}\right)\eta_k(-u)\,dv\,du,
    \]
    which gives
    \begin{align*}
        \eta_k(x) &= \eta_k(-\infty)
            - \frac{\la_k}\ep\int_{-x}^\infty\int_{-u}^x
            \exp\left({\frac{F(v')-F(-u)}\ep}\right)\eta_k(-u)\,dv'\,du\\
        &= \eta_k(-\infty)
            - \frac{\la_k}\ep\int_{-\infty}^x\int_{u'}^x
            \exp\left({\frac{F(v')-F(u')}\ep}\right)\eta_k(u')\,dv'\,du',
    \end{align*}
    proving \eqref{eigenintrep2}.
\end{proof}

We now assume that for some fixed $\wt x_0<0<\wt x_1$:
\begin{enumerate}[(i)] 
    \item   $F$ is strictly decreasing on $(-\infty ,\wt x_0)$ and $(0,\wt
            x_1)$, and strictly increasing on $(\wt x_0,0)$ and $(\wt
            x_1,\infty)$.
    \item   $F''(\wt x_0)>0$, $F''(0)<0$, $F''(\wt x_1)>0$.
    \item   $F(\wt x_0)\ne F(\wt x_1)$.
\end{enumerate}
Then $\cM=\{\wt x_0,\wt x_1\}$ and $m=1$. If $F(\wt x_0)<F(\wt x_1)$, then
\[
    \wh F(\wt x_1,\{\wt x_0\}) - F(\wt x_1) = F(0) - F(\wt x_1)
        < F(0) - F(\wt x_0) = \wh F(\wt x_0,\{\wt x_1\}) - F(\wt x_0),
\]
which would imply $x_0=\wt x_0$, and $x_1=\wt x_1$. On the other hand, if $F(\wt
x_1)<F(\wt x_0)$, then $x_0=\wt x_1$ and $x_1=\wt x_0$. For now, we will not
assume either ordering of the local minima, so that our assumptions are
symmetric under the reflection $x\mapsto -x$. Because of this, results that are
stated in terms of $\wt x_0$ can be applied to $\wt x_1$ by replacing $F(x)$
with $F(-x)$.

Let $\eta =\eta_1$ and $\la=\la_1$. By Courant's nodal domain theorem
\cite[Section VI.6, p.454]{Courant1953}, replacing $\eta$ by $-\eta$ if
necessary, there exists $r=r_\ep\in\bR$ such that
\[
    \eta (x)\begin{cases}
        < 0 &\text{if $x < r_\ep$},\\
        = 0 &\text{if $x = r_\ep$},\\
        > 0 &\text{if $x > r_\ep$}.
    \end{cases}
\]
It therefore follows from Lemma \ref{L:eigenintrep} that $\eta$ is strictly
increasing.  

By \cite[Theorem 1.2]{BGK},
\begin{equation}\label{evasym}
    \la = \frac{|F''(0)F''(x_1)|^{1/2}}{2\pi}\,e^{-(F(0)-F(x_1))/\ep}
        (1+O(\ep^{1/2}|\log\ep|)).
\end{equation}
By \cite[(3.3)]{BEGK}, we have
\begin{equation}\label{ttasym}
    E^{x_j}[\tau^X_{B_\rho(x_{1-j})}]
        \sim \frac{2\pi}{|F''(0)F''(x_j)|^{1/2}}\,e^{(F(0)-F(x_j))/\ep},
\end{equation}
for $0<\rho <|\wt x_0|\wedge |\wt x_1|$. And the following special case of
\cite[Proposition 3.3]{BGK} gives us a way to estimate the shape of the
eigenfunction.  

\begin{thm}\label{T:dirshape}
    Let $h(y)=P^y(\tau^X_{(x_0-\ep,x_0+\ep)}<\tau^X_{r_\ep})$ and
    $\phi(y)=|\eta(y)|/|\eta(x_0+\ep)|$. Then there exists $C,\al,\ep_0>0$ such
    that
    \[
        h(y) \le \phi(y) \le h(y)(1 + C\ep^{\al/2}),
    \]
    for all $y<r_\ep$ and all $\ep\in(0,\ep_0)$.
\end{thm}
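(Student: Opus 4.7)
The strategy is to view $\phi$ on $(-\infty,r_\ep)$ as a small-$\la$ perturbation of $h$ via Feynman-Kac. Writing $L=\ep\pa^2-F'\pa$ for the generator and $\psi=-\eta$ (positive and strictly decreasing on $(-\infty,r_\ep)$ by the monotonicity of $\eta$ noted after Lemma \ref{L:eigenintrep}), one has $L\psi=-\la\psi$, so $\phi=\psi/\psi(x_0+\ep)$ solves $L\phi=-\la\phi$ on $(x_0+\ep,r_\ep)$ with $\phi(x_0+\ep)=1$, $\phi(r_\ep)=0$. Meanwhile $h$ is $L$-harmonic on the same interval with identical boundary values, and a one-dimensional geometric argument (the process cannot reach $r_\ep$ from the left half-line without first crossing $B_\ep(x_0)$) shows that $h\equiv 1$ on $(-\infty,x_0+\ep]$.

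For the lower bound, I will apply Feynman-Kac to $u=\phi-h$ on $(x_0+\ep,r_\ep)$: the relation $Lu=-\la\phi$ with zero boundary data gives
\[
    u(y)=\la\,E^y\Bigl[\int_0^{\tau}\phi(X_s)\,ds\Bigr]\ge 0,
\]
where $\tau$ is the exit time from the interval. On the left region, the monotonicity of $\eta$ gives $\phi(y)\ge\phi(x_0+\ep)=1=h(y)$, so $h\le\phi$ throughout.

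For the upper bound, the same identity gives $\phi(y)=h(y)+\la E^y[\int_0^\tau\phi(X_s)\,ds]$. Since $\phi\le 1$ on $(x_0+\ep,r_\ep)$ (being monotone-decreasing from $1$ to $0$), the error is bounded by $\la E^y[\tau]$. By \eqref{evasym}, $\la$ is exponentially small, while the explicit representation
\[
    h(y)=\frac{\int_y^{r_\ep}e^{F(z)/\ep}\,dz}{\int_{x_0+\ep}^{r_\ep}e^{F(z)/\ep}\,dz}
\]
together with Laplace's method near the saddle (using that $r_\ep$ is close to $0$ and $F''(0)<0$) provides a sharp lower bound on $h(y)$ in terms of $r_\ep-y$. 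A parallel Laplace analysis of $E^y[\tau]$ should then yield $\la E^y[\tau]/h(y)=O(\ep^{\al/2})$ uniformly in $y\in(x_0+\ep,r_\ep)$. On the left region $y\le x_0+\ep$, the upper bound $\phi\le 1+C\ep^{\al/2}$ will follow from the integral representation \eqref{eigenintrep2}: the exponentially small $\la$ combined with the boundedness of $\eta$ makes $\psi(y)/\psi(x_0+\ep)-1$ super-polynomially small, hence negligible compared to $\ep^{\al/2}$.

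The hard part will be making the upper bound uniform as $y\uparrow r_\ep$, where $h(y)$ itself vanishes. The precise exponent $\al/2$ emerges from carefully matching leading-order Laplace asymptotics of the numerator of $h(y)$ and of $E^y[\tau]$ near the saddle, exploiting the quadratic profile of $F$ around $0$ and the convergence of the nodal point $r_\ep$ to $0$. Once these matching asymptotics are in hand, combining the estimates on the two regions completes the proof.
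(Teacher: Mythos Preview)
The paper does not prove this statement; it is quoted as a special case of \cite[Proposition~3.3]{BGK}, so there is no in-paper argument to compare against. Your lower bound $h\le\phi$ is correct: the Feynman--Kac identity you write down (equivalently the representation $\phi(y)=E^y[e^{\la\tau};X_\tau=x_0+\ep]$ on $(x_0+\ep,r_\ep)$) gives it immediately, and the left region $y\le x_0+\ep$ is handled as you say via monotonicity together with \eqref{eigenintrep2}.

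The upper bound on $(x_0+\ep,r_\ep)$ is where the proposal is a sketch rather than a proof. Bounding $\phi\le 1$ in the integral yields $\phi(y)-h(y)\le\la\,E^y[\tau]$, so you must show $\la\,E^y[\tau]/h(y)=O(\ep^{\al/2})$ uniformly in $y$; you label this ``the hard part'' and assert it follows from Laplace asymptotics without carrying it out. Two concrete problems. First, you invoke ``convergence of $r_\ep$ to $0$'' and ``the quadratic profile of $F$ around $0$'', but nothing about $r_\ep$ has been established at this point; Corollary~\ref{C:nodeloc} comes later (it does not use Theorem~\ref{T:dirshape}, so you could prove it first, but you have not), and even then it only gives $r_\ep\in(\xi-\de,\de)$ with $\xi<0$---the paper's own remark after Lemma~\ref{L:midint} in fact suggests $r_\ep\to\xi$, not $0$, so the saddle profile at $0$ is the wrong local model. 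Second, as $y\uparrow r_\ep$ both $E^y[\tau]$ and $h(y)$ vanish, and one computes
\[
    \lim_{y\uparrow r_\ep}\frac{E^y[\tau]}{h(y)}
        =\frac1\ep\int_{x_0+\ep}^{r_\ep}
            \Bigl(\int_{x_0+\ep}^{z}e^{F(w)/\ep}\,dw\Bigr)e^{-F(z)/\ep}\,dz,
\]
whose size you must control with no a priori information on $r_\ep$. This can be done, but it is not the one-line Laplace step you suggest. A cleaner route---and one that sidesteps the location of $r_\ep$ entirely---is to use the multiplicative form $\phi(y)/h(y)=E^y[e^{\la\tau}\mid X_\tau=x_0+\ep]$ and bound the Laplace transform of the hitting time of $x_0+\ep$ under the Doob $h$-transform; this is automatically uniform in $y$ and is closer in spirit to the capacity-based machinery that underlies the BGK result you are trying to reproduce.
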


To apply this result, we will use the following two lemmas, which formulate the
Freidlin and Wentzell results in our specific case.  

\begin{lemma}\label{L:escape}
    Let $a<\wt a<\wt x_0<\wt b<b<0$ and fix $\de>0$. Then there exists $\ep_0>0$
    such that
    \[
        \exp\left({\frac{1-\de}\ep(F(a)\wedge F(b) - F(\wt x_0))}\right)
            \le E^x[\tau^X_{(a,b)^c}]
            \le \exp\left({
                    \frac{1+\de}\ep(F(a)\wedge F(b) - F(\wt x_0)
                )}\right),
    \]
    for all $\wt a\le x\le\wt b$ and all $\ep\in(0,\ep_0)$. The analogous result
    also holds when $0<a<\wt a<\wt x_1<\wt b<b$.
\end{lemma}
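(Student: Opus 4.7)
The plan is to use the classical representation of the mean exit time for a one-dimensional diffusion in terms of the scale function, together with Laplace asymptotics. Since $u(x) := E^x[\tau^X_{(a,b)^c}]$ solves $\ep u'' - F'u' = -1$ on $(a,b)$ with $u(a) = u(b) = 0$, setting $s(x) := \int_a^x e^{F(z)/\ep}\,dz$ we obtain the representation
\begin{equation*}
    u(x) = \frac{1}{\ep}\int_a^b G(x,y)\, e^{-F(y)/\ep}\,dy,
    \qquad
    G(x,y) = \frac{(s(x\wedge y) - s(a))(s(b) - s(x \vee y))}{s(b) - s(a)}.
\end{equation*}
Because the lemma asks only for logarithmic asymptotics with factor $(1 \pm \de)/\ep$ in the exponent, any polynomial prefactor in $\ep$ can be absorbed for $\ep$ small. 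The proof thus reduces to extracting the leading exponential order of $u(x)$.

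For the upper bound, I would assume without loss of generality $F(b) \le F(a)$. Cancel the denominator via $s(x\wedge y) - s(a) \le s(b) - s(a)$, so $G(x,y) \le s(b) - s(x \vee y)$. Since $F$ is increasing on $[\wt x_0, b]$, we have $s(b) - s(x \vee y) \le s(b) - s(\wt x_0) \le (b - \wt x_0) e^{F(b)/\ep}$, giving $G(x,y) \le (b - \wt x_0) e^{F(b)/\ep}$. A standard Laplace estimate yields $\int_a^b e^{-F(y)/\ep}\,dy \le C\,\ep^{1/2}\, e^{-F(\wt x_0)/\ep}$, so $u(x) \le C'\,\ep^{-1/2}(b - \wt x_0)\, e^{(F(b) - F(\wt x_0))/\ep}$, which is bounded by $e^{(1 + \de)(F(a)\wedge F(b) - F(\wt x_0))/\ep}$ for $\ep$ sufficiently small.

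For the lower bound, restrict the $y$-integral to a small fixed neighborhood $N \subset [\wt a, \wt b]$ of $\wt x_0$. For $x \in [\wt a, \wt b]$ and $y \in N$, both $x \wedge y$ and $x \vee y$ lie in $[\wt a, \wt b]$, so $s(x \wedge y) - s(a) \ge s(\wt a) - s(a)$ and $s(b) - s(x \vee y) \ge s(b) - s(\wt b)$. Laplace expansion at each endpoint (using $F'(a) < 0$ and $F'(b) > 0$) gives $s(\wt a) - s(a) \ge c_1 \ep\, e^{F(a)/\ep}$ and $s(b) - s(\wt b) \ge c_2 \ep\, e^{F(b)/\ep}$, while $s(b) - s(a) \le (b - a)\, e^{(F(a) \vee F(b))/\ep}$. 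Combining, $G(x,y) \ge c_3 \ep^2\, e^{(F(a)\wedge F(b))/\ep}$. A Gaussian expansion around $\wt x_0$ gives $\int_N e^{-F(y)/\ep}\,dy \ge c_4\, \ep^{1/2}\, e^{-F(\wt x_0)/\ep}$, whence $u(x) \ge c_5\, \ep^{3/2}\, e^{(F(a)\wedge F(b) - F(\wt x_0))/\ep}$ uniformly on $[\wt a, \wt b]$, which suffices.

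The only subtle point in the calculation is the cancellation inside $G$ that produces the smaller barrier height $F(a) \wedge F(b)$ rather than the naive sum $F(a) + F(b)$: the denominator $s(b) - s(a)$ is dominated by the larger of the two exponentials $e^{F(a)/\ep}$, $e^{F(b)/\ep}$, which cancels the corresponding factor in the numerator of $G$. The analogous result near $\wt x_1$ follows from the reflection $x \mapsto -x$ already noted in the paragraph preceding the lemma.
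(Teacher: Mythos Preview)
Your approach is genuinely different from the paper's. The paper's proof is a two-line appeal to the Freidlin--Wentzell exit-time theorem (Theorem~\ref{T:FandW} in the appendix): since $\ep\log E^x[\tau^X_{(a,b)^c}]\to F(a)\wedge F(b)-F(\wt x_0)$ uniformly on $[\wt a,\wt b]$, both inequalities follow at once. Your route via the one-dimensional Green's function and Laplace asymptotics is self-contained and avoids the large-deviations machinery entirely; the trade-off is that it is specific to $d=1$ and requires more computation.

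Your lower bound is correct as written. The upper bound, however, has a gap: the inequality $s(b)-s(x\vee y)\le s(b)-s(\wt x_0)$ requires $x\vee y\ge\wt x_0$, which fails whenever both $x$ and $y$ lie in $[\wt a,\wt x_0)$. This case cannot be discarded, and bounding $G(x,y)$ uniformly in $y$ before integrating does not recover the correct exponent when $F(\wt a)>F(b)$ (still assuming $F(b)\le F(a)$). The repair is to keep the $y$-integral: after your first step, swap the order of integration to get
\[
    u(x)\le\frac1\ep\int_x^b e^{F(z)/\ep}\left(\int_a^z e^{-F(y)/\ep}\,dy\right)dz,
\]
and then split at $z=\wt x_0$. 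For $z<\wt x_0$ the inner integral is at most $(z-a)e^{-F(z)/\ep}$, so this piece contributes only $O(\ep^{-1})$; for $z\ge\wt x_0$ the inner integral is at most $C\sqrt\ep\,e^{-F(\wt x_0)/\ep}$, while $\int_{\wt x_0}^b e^{F(z)/\ep}\,dz\le(b-\wt x_0)e^{F(b)/\ep}$ since $F$ is increasing there. Together these give the desired upper bound with a polynomial prefactor that the $(1+\de)$ slack absorbs.
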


\begin{proof}
    By Theorem \ref{T:FandW}, $\ep\log E^x[\tau^X_{(a,b)^c}]\to L:=F(a)\wedge
    F(b)-F(\wt x_0)$ as $\ep\to 0$, uniformly in $x$ on $[\wt a,\wt b]$. Thus,
    there exists $\ep_0$ such that $\ep\in (0,\ep_0)$ implies $\ep\log
    E^x[\tau_{(a,b)^c}]\le (1+\de)L$, which gives the upper bound. The lower
    bound is deduced similarly.  
\end{proof}

\begin{lemma}\label{L:whichside}
    Let $a<\wt x_0<b<0$ or $0<a<\wt x_1<b$ and define $G=(a,b)$. Assume $F(a)\ne
    F(b)$ and choose $y\in\{a,b\}$ such that $F(y)=F(a)\vee F(b)$. Then, for all
    compact $K\subset G$ and all $\ga>0$, there exists $\ep_0>0$ such that
    \[
        \exp\left({-\frac{|F(a) - F(b)| + \ga}\ep}\right)
            \le P^x(X(\tau^X_{G^c}) = y)
            \le \exp\left({-\frac{|F(a) - F(b)| - \ga}\ep}\right),
    \]
    for all $x\in K$ and all $\ep\in(0,\ep_0)$.
\end{lemma}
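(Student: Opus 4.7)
The plan is to bypass the large-deviation machinery and instead use the explicit integral formula for the exit distribution of a one-dimensional gradient diffusion. First I would observe that $u(x):=P^x(X(\tau_{G^c})=b)$ is the unique bounded solution on $[a,b]$ of $\ep u''-F'u'=0$ with $u(a)=0$ and $u(b)=1$; rewriting the ODE as $(u'e^{-F/\ep})'=0$ and integrating twice yields
\[
    P^x(X(\tau_{G^c})=b)=\frac{\int_a^x e^{F(s)/\ep}\,ds}{\int_a^b e^{F(s)/\ep}\,ds},\qquad P^x(X(\tau_{G^c})=a)=\frac{\int_x^b e^{F(s)/\ep}\,ds}{\int_a^b e^{F(s)/\ep}\,ds}.
\]
With these formulas in hand, the lemma reduces to Laplace-type asymptotics on two explicit integrals.

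Without loss of generality suppose $F(a)>F(b)$, so $y=a$. The shape of $F$ on $[a,b]$ (decreasing on $[a,\wt x_0]$, increasing on $[\wt x_0,b]$, with $F(a)>F(b)$) forces the unique maximum on $[a,b]$ to sit at the boundary point $a$, where $F'(a)<0$ is non-degenerate, so Laplace's method at a boundary maximum gives $\int_a^b e^{F/\ep}\,ds=\ep|F'(a)|^{-1}e^{F(a)/\ep}(1+o(1))$. For the upper bound I would estimate $\int_x^b e^{F/\ep}\,ds\le(b-a)e^{M_x/\ep}$ with $M_x:=\max_{[x,b]}F$, controlling $M_x\le F(b)$ uniformly over $K$ via compactness and the monotone structure of $F$ on each side of $\wt x_0$; this gives $P^x(X(\tau_{G^c})=a)\le C\ep^{-1}e^{-(F(a)-F(b))/\ep}$, and the polynomial prefactor $\ep^{-1}$ is absorbed into $e^{\ga/\ep}$ once $\ep<\ep_0$ is small. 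For the lower bound I would restrict the numerator to $[b-\de,b]$ and Taylor-expand $F$ at $b$ to get $\int_x^b e^{F/\ep}\,ds\ge\de\,e^{(F(b)-C\de)/\ep}$, combined with the trivial bound $\int_a^b e^{F/\ep}\,ds\le(b-a)e^{F(a)/\ep}$; choosing $\de$ small relative to $\ga$ delivers the lower bound, and the case $F(b)>F(a)$ is handled symmetrically by interchanging the roles of $a$ and $b$.

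The main obstacle will be the uniformity in $x\in K$. The denominator estimate is independent of $x$ and needs only standard uniform Laplace asymptotics, but the numerator estimate requires $M_x\le F(b)$ to hold uniformly over $K$; this rests on the compactness of $K\subset G$ together with the single-well monotone structure of $F$ about $\wt x_0$, and it is the one step where one must be careful about the location of $K$ relative to the higher-barrier endpoint $y$. Once these uniform Laplace estimates are in place, all non-exponential corrections fit inside the $e^{\pm\ga/\ep}$ slack for $\ep$ sufficiently small, and both bounds follow directly from the explicit ratio formula.
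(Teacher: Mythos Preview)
Your scale-function approach is a genuinely different and more elementary route than the paper's, which instead computes the Freidlin--Wentzell quantities $M_G$ and $M_G(x,a)$ and invokes the general exit-distribution estimate (Theorem~\ref{T:FandW2}). The explicit ratio formula makes the $x$-dependence completely transparent, and your lower bound goes through exactly as you describe.

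That transparency, however, exposes a real problem at precisely the step you yourself flag. In the case $F(a)>F(b)$, let $c\in(a,\wt x_0)$ be the unique point with $F(c)=F(b)$. Your claim $M_x=\max_{[x,b]}F\le F(b)$ holds only for $x\in[c,b)$; for $x\in(a,c)$ the maximum sits at the left endpoint and $M_x=F(x)>F(b)$. Your own ratio formula then yields
\[
    P^{x}(X(\tau_{G^c})=a)\ \sim\ \frac{|F'(a)|}{|F'(x)|}\,e^{-(F(a)-F(x))/\ep},
\]
with exponent $F(a)-F(x)$ strictly smaller than $F(a)-F(b)$, so for any $\ga<F(x)-F(b)$ the stated upper bound fails. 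This is not a defect of your method but of the lemma as written: the upper bound cannot hold on an arbitrary compact $K\subset G$. The paper's proof contains the parallel slip --- in the case $a<x<\wt x_0$ the middle term of the displayed minimum should read $F(b)-F(\wt x_0)+F(a)-F(x)$ rather than $F(b)-F(\wt x_0)+F(a)-F(\wt x_0)$, and with the corrected term one finds $M_G(x,a)-M_G=2(F(a)-F(x))$ whenever $F(x)>F(b)$. A clean fix is to add the hypothesis $F(x)\le F(a)\wedge F(b)$ for all $x\in K$; under that restriction your argument is complete.
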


\begin{proof}
    We prove only the case where $a<\wt x_0<b$ and $F(a)>F(b)$, so that $y=a$.
    The proofs of the other cases are similar. We use Theorem \ref{T:FandW2},
    Proposition \ref{P:FandW}, and Lemma \ref{L:FandW}. Note that, according to
    the discussion preceding Theorem \ref{T:FandW2}, we have $V_G(x,y)=V(x,y)$
    for all $x,y\in [a,b]$.  

    Fix $x\in K$. In this case,
    \[
        M_G = V_G(\{\wt x_0\}, \{a,b\}) = V_G(\wt x_0, a) \wedge V_G(\wt x_0, b)
            = 2(F(b) - F(\wt x_0)),
    \]
    and
    \begin{align*}
        M_G(x,a) &= \min\{V_G(\wt x_0,x) + V_G(x,a),
            V_G(\wt x_0,\{a,b\}) + V_G(x,a),
            V_G(x,\wt x_0) + V_G(\wt x_0,a)\}\\
        &= \min\{2(F(x) - F(\wt x_0)) + V_G(x,a),
            2(F(b) - F(\wt x_0)) + V_G(x,a),
            2(F(a) - F(\wt x_0))\}
    \end{align*}
    If $a<x<\wt x_0$, then $V_G(x,a)=2(F(a)-F(x))$, so that
    \begin{align*}
        M_G(x,a) &= 2\min\{F(a) - F(\wt x_0),
            F(b) - F(\wt x_0) + F(a) - F(\wt x_0),
            F(a) - F(\wt x_0)\}\\
        &= 2(F(a) - F(\wt x_0)).
    \end{align*}
    If $\wt x_0\le x<b$, then $V_G(x,a)=2(F(a)-F(\wt x_0))$, so that
    \begin{align*}
        M_G(x,a) &= 2\min\{F(x) + F(a) - 2F(\wt x_0),
            F(b) + F(a) - 2F(\wt x_0),
            F(a) - F(\wt x_0)\}\\
        &= 2(F(a) - F(\wt x_0)).
    \end{align*}
    Thus, $M_G(x,a)-M_G=2(F(a)-F(b))$, and the result follows from Theorem
    \ref{T:FandW2}.
\end{proof}

\subsection{Location of the nodal point}

Our first order of business is to identify an interval in which the nodal point
(that is, the zero of the second eigenfunction) is asymptotically located. The
essential feature of the interval is that it is bounded away from the minima as
$\ep\to 0$.

The statement of this result is Corollary \ref{C:nodeloc}. To prove this result,
we need four lemmas, all concerning stopping times of $X$.

\begin{lemma}\label{L:infrominf}
    There exists $R>0$ such that
    $\sup\{E^x[\tau^X_K]:x\in\bR^d,\ep\in(0,1)\}<\infty$, where $K=\ol{B_R(0)}$.
     
\end{lemma}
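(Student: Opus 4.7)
Since we have specialized to $d=1$, the plan is to construct an explicit bounded Lyapunov function $u$ for the generator $A$ outside a sufficiently large interval $K=[-R,R]$, satisfying $Au=-1$ on $K^c$ and $u\le C$ with $C$ independent of $\ep$, and to read off the desired bound on $E^x[\tau^X_K]$ via Dynkin's formula. The one-dimensional structure allows $u$ to be written explicitly as an iterated integral, and the assumption $a_1>2$ (combined with the growth bounds \eqref{superquad1}) provides exactly enough decay in the drift to make $u$ bounded uniformly in $\ep\in(0,1)$.

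Fix $R>0$ large enough that $c_1R^{a_1}>c_2$, so that $g(y):=(c_1|y|^{a_1}-c_2)^{1/2}$ is positive and strictly increasing on $\{|y|\ge R\}$. By \eqref{superquad1}, $|F'(y)|\ge g(y)$ on this set; and since $F(\pm\infty)=\infty$, by enlarging $R$ if necessary, $F'(y)\ge g(y)$ on $[R,\infty)$ and $F'(y)\le -g(|y|)$ on $(-\infty,-R]$. For $x>R$, define
\[
    u(x) = \int_R^x e^{F(y)/\ep}\int_y^\infty \frac{1}{\ep}\,e^{-F(z)/\ep}\,dz\,dy,
\]
which satisfies $\ep u''-F'u'=-1$ on $(R,\infty)$ and $u(R)=0$ by direct differentiation.

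The key step, and the main obstacle, is the uniform-in-$\ep$ bound on $u$: the integrand contains factors $e^{\pm F/\ep}$ which each diverge as $\ep\to 0$, and one must extract their cancellation. Since $F'(z)\ge g(z)\ge g(y)$ for $z\ge y\ge R$, integration by parts (monotonicity) gives
\[
    \int_y^\infty \frac{1}{\ep}\,e^{-F(z)/\ep}\,dz
        \le \frac{1}{g(y)}\int_y^\infty \frac{F'(z)}{\ep}\,e^{-F(z)/\ep}\,dz
        = \frac{e^{-F(y)/\ep}}{g(y)}.
\]
Multiplying by $e^{F(y)/\ep}$ shows the inner integrand of $u(x)$ is bounded by $g(y)^{-1}$, and since $g(y)\ge c\,y^{a_1/2}$ with $a_1/2>1$,
\[
    0 \le u(x) \le \int_R^\infty \frac{dy}{g(y)} =: C_0 < \infty,
\]
uniformly in $x>R$ and $\ep\in(0,1)$.

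Finally, applying Dynkin's formula to $u$ up to the stopping time $\tau^X_R\wedge\tau^X_{[-N,N]^c}\wedge t$ and letting $N,t\to\infty$ (using boundedness of $u$ to pass to the limit and non-explosion of $X_\ep$), one obtains $E^x[\tau^X_R]\le u(x)\le C_0$. Because $X$ is continuous and $x>R$, $\tau^X_K=\tau^X_R$, giving $E^x[\tau^X_K]\le C_0$. An entirely mirrored construction, replacing $\int_y^\infty$ with $\int_{-\infty}^y$ and the interval $[R,\infty)$ with $(-\infty,-R]$, handles $x<-R$ with the same hypothesis $a_1>2$ and yields a bound $C_0'$; combined with $E^x[\tau^X_K]=0$ on $K$, this proves $\sup_{x\in\bR,\,\ep\in(0,1)}E^x[\tau^X_K]<\infty$.
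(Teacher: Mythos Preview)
Your argument is correct and follows a genuinely different route from the paper's proof. The paper applies It\^o's formula to $F(X(t))$, performs a DDS time change on the martingale part to obtain a Brownian motion with drift $\wh W$, and then shows $[M]_{\si_n}\le\wt\si_n$, where $\si_n$ is the first time $F(X)$ drops below $n$ and $\wt\si_n$ is the corresponding hitting time of $\sqrt{2\ep}\,\wh W$. Using $|\nabla F(x)|^2\ge C_3 F(x)^r$ with $r=a_1/\wt a_2>1$ (which requires the upper growth bound \eqref{superquad3} on $F$ as well as the lower bound on $|\nabla F|$), they obtain the recursive estimate $E^x[\si_n]\le 2C_3^{-1}n^{-r}$ and sum the resulting convergent series.

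By contrast, you exploit the one-dimensional setting to write down the explicit solution of $Au=-1$ on each half-line and bound it directly. Your approach is more elementary and uses strictly fewer hypotheses: only the lower bound $|F'(y)|\ge(c_1|y|^{a_1}-c_2)^{1/2}$ and $a_1>2$ are needed, whereas the paper's argument also invokes the upper bound on $F$ through $\wt a_2$. The price you pay is that your construction is specific to $d=1$ (the paper's proof is written so as to work in $\bR^d$), but since the lemma sits in the section that has already specialized to $d=1$, this is no loss here.
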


\begin{proof}
    In this proof, for $r>0$, let
    $\si_r=\tau^{F(X)}_{(-\infty,r]}=\inf\{t\ge0:F(X(t))\le r\}$.

    Choose $C_1,C_2,L>0$ such that  
    \begin{enumerate}[(i)] 
        \item   $V(x)\ge C_1|x|^{a_1}$,
        \item   $C_1|x|^{a_1}\le |\nab F(x)|^2\le C_2|x|^{a_2}$, and
        \item   $C_1|x|^{\wt a_1}\le F(x)\le C_2|x|^{\wt a_2}$,
    \end{enumerate}
    for all $|x|>L$, where $\wt a_j$ are as in \eqref{superquad3}. Choose $R>L$
    such that
    \[
        I := (1 \vee \sup_{|x| \le L} F(x), C_1 R^{\wt a_1}]
            \cap \bN \neq \emptyset,
    \]
    and choose $b\in I$.  

    Suppose $\om\in \{\tau_K>t\}$. Then, for all $s\le t$, we have that
    $|X(s)|>R>L$, and so it follows that $F(X(s))\ge C_1|X(s)|^{\wt
    a_1}>C_1R^{\wt a_1}\ge b$. Thus, $\om\in \{\si_b>t\}$, and we have shown
    that $\tau_K\le\si_b$ a.s. It therefore suffices to show that $E^x[\si_b]$
    is bounded above by a constant that does not depend on $x$ or $\ep$.

    Fix $\ep\in (0,1)$. Let $r=a_1/\wt a_2$ and $C_3=C_1C_2^{-r}$. We will first
    prove that if $x\in\bR^d$, $n\in\bN$, and $b\le n<F(x)\le n+1$, then 
    \begin{equation}\label{indtool}
        E^x[\si_n] \le 2C_3^{-1}n^{-r}.
    \end{equation}
    Let $x$ and $n$ satisfy the assumptions. Using It\^o's rule, we can write
    \[
        F(X(t)) = F(x) + \sqrt{2\ep}\,M(t) - 2\ep\int_0^t \psi (X(s))\,ds,
            P^x\text{-a.s.}
    \]
    where $M(t)=\int_0^t\nab F(X(s))\,dW(s)$ and $\psi=\ep V+|\nab F|^2/(4\ep)$.
    Let $\wt W(s)=M(T(s))$, where the stopping time $T(s)$ is defined by
    $T(s)=\inf\{t\ge 0:[M]_t>s\}$. By \cite[Theorem 3.4.6]{Karatzas1991}, $\wt
    W$ is a standard Brownian motion, and $M(t)=\wt W([M]_t)$. Moreover, by
    \cite[Problem 3.4.5]{Karatzas1991}, $s<[M]_t$ if and only if $T(s)<t$, and
    $[M]_{T(s)}=s$ for all $s\ge 0$.

    Let
    \[
        \wh W(t) = \wt W(t) - \frac 1{2\sqrt {2\ep}}\,t,
    \]
    and define $\wt\si_n=\tau^{\sqrt {2\ep}\,\wh W}_{(-\infty ,n-F(x)]}=\inf
    \{t\ge 0:\wh W(t)\le (n-F(x))/\sqrt {2\ep}\}$. We will prove that
    $[M]_{\si_n}\le\wt\si_n$ a.s. Note that 
    \begin{align*}
        \{\wt\si_n < [M]_{\si_n}\}
            &= \bigcup_{s\in\bQ}\left({
                \{s < [M]_{\si_n}\} \cap \left\{{
                \wh W(s) \le \frac{n - F(x)}{\sqrt{2\ep}}
                }\right\}
            }\right)\\
        &=  \bigcup_{s\in\bQ}\left({
            \{T(s) < \si_n\} \cap \left\{{
                \wh W([M]_{T(s)}) \le\frac{n - F(x)}{\sqrt{2\ep}}
            }\right\}
        }\right).
    \end{align*}
    On the event $\{T(s)<\si_n\}$, we have, for all $u\le T(s)$,
    \begin{equation}\label{pretaun}
        F(X(u)) > n \ge b > \sup_{|x|\le L}F(x),
    \end{equation}
    where the first inequality comes from the definition of $\si_n$. It follows
    that $|X(u)|>L$. Thus, by (i), we have $V(X(u))>0$, and so
    $\psi(X(u))>|\nab F(X(u))|^2/(4\ep)$. Hence,
    \begin{align*}
        n < F(X(T(s))) &\le F(x) + \sqrt{2\ep}M(T(s))
            - \frac12\int_0^{T(s)} |\nab F(X(u))|^2\,du\\
        &= F(x) + \sqrt{2\ep}\,\wt WB([M]_{T(s)}) - \frac12[M]_{T(s)}\\
        &= F(x) + \sqrt{2\ep}\,\wh W([M]_{T(s)}).
    \end{align*}
    Therefore, $\wh W([M]_{T(s)})>(n-F(x))/\sqrt{2\ep}$ a.s.~on the event
    $\{T(s)<\si_n\}$, which shows that $P(\wt\si_n<[M]_{\si_n})=0$.

    Note that for all $|x|>L$, we have
    \[
        |\nab F(x)|^2 \ge C_1|x|^{a_1} = C_1(|x|^{\wt a_2})^{a_1/\wt a_2}
            \ge C_1(C_2^{-1}F(x))^{a_1/\wt a_2} = C_3F(x)^r.
    \]
    Thus, as in \eqref{pretaun}, we obtain
    \[
        \wt\si_n \ge [M]_{\si_n} = \int_0^{\si_n} |\nab F(X(u))|^2\,du
            \ge C_3\int_0^{\si_n} F(X(u))^r\,du \ge C_3n^r\si_n.
    \]
    Hence, using \cite[Exercise 3.5.10]{Karatzas1991}, which gives the Laplace
    transform of $\wt\si_n$, we have
    \[
        E^x[\si_n] \le C_3^{-1}n^{-r}E^x[\wt\si_n]
            = 2C_3^{-1}n^{-r}(F(x) - n)
            \le 2C_3^{-1}n^{-r},
    \]
    which proves \eqref{indtool}. It now follows by induction and the Markov
    property that
    \[
        E^x[\si_b] \le 2C_3^{-1}\sum_{j=b}^n j^{-r},
    \]
    whenever $b\le n<F(x)\le n+1$. Since
    \[
        \wt a_2 = \frac {a_2}2 + 1 < \frac{2a_1-2}2 + 1 = a_1,
    \]
    it follows that $r>1$. Hence, $C_4:=\sum_{j=b}^\infty j^{-r}<\infty$. Since
    $\si_b=0$, $P^x$-a.s., whenever $F(x)\le b$, we have that $E^x[\si_b] \le
    2C_3^{-1}C_4$ for all $x\in\bR^d$.
\end{proof}

\begin{lemma}\label{L:intomin}
    Let $x<\wt x_0$. Then there exists $\ep_0>0$ such that
    \[
        \sup\{E^y[\tau^X_x]: y < x, \ep \in (0,\ep_0)\} < \infty .
    \]
\end{lemma}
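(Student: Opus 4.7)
The plan is to pass from $y$ to $x$ in two stages. First, using Lemma~\ref{L:infrominf}, I pick $R$ so large that $R>|x|$, $-R<\wt x_0$, and $C_*:=\sup\{E^y[\tau^X_K]:y\in\bR,\ep\in(0,1)\}<\infty$, where $K=[-R,R]$. For $y<-R$, path-continuity forces $X(\tau^X_K)=-R$, so by the strong Markov property $E^y[\tau^X_x]\le C_*+E^{-R}[\tau^X_x]$; it therefore suffices to bound $E^y[\tau^X_x]$ uniformly in $\ep$ for $y\in[-R,x]$. The second stage exploits the strong rightward drift $-F'(z)>0$ on $(-\infty,\wt x_0)$ via a Dynkin-type argument combined with a scale-function estimate.

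\emph{Drift estimate.} Fix $\de>2(x+R)$, and set $\mu=\inf_{z\in[-R-\de,x]}(-F'(z))>0$, using that $F$ is strictly decreasing on $(-\infty,\wt x_0)\supset[-R-\de,x]$. For $y\in[-R,x]$, let $\tau=\tau^X_x\wedge\tau^X_{-R-\de}$. Applying It\^o's formula to $f(z)=z$ up to $\tau\wedge n$, and using that $X(s)\in[-R-\de,x]$ on $[0,\tau]$ while $X(\tau\wedge n)\le x$,
\[
    \mu\,E^y[\tau\wedge n]
        \le E^y\!\left[\int_0^{\tau\wedge n}-F'(X(s))\,ds\right]
        = E^y[X(\tau\wedge n)] - y
        \le x-y,
\]
so letting $n\to\infty$ yields $E^y[\tau]\le(x+R)/\mu$.

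\emph{Scale function and renewal.} The scale function $s(z)=\int_0^z e^{F(v)/\ep}\,dv$ makes $s(X)$ a local martingale, and optional stopping at $\tau$ gives
\[
    P^{-R}(\tau^X_{-R-\de}<\tau^X_x)
        = \frac{s(x)-s(-R)}{s(x)-s(-R-\de)}
        \le \frac{(x+R)\,e^{F(-R)/\ep}}{\de\,e^{F(-R)/\ep}}
        = \frac{x+R}{\de} \le \tfrac12,
\]
uniformly in $\ep$, because $F$ is decreasing on $[-R-\de,x]$. By the strong Markov property at $\tau$, combined with the continuity bound $E^{-R-\de}[\tau^X_{-R}]\le C_*$,
\[
    E^{-R}[\tau^X_x]
        \le E^{-R}[\tau] + \tfrac12\bigl(C_*+E^{-R}[\tau^X_x]\bigr),
\]
giving $E^{-R}[\tau^X_x]\le 2(x+R)/\mu+C_*$; the analogous bound for arbitrary $y\in[-R,x]$ follows by the same argument. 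The main subtlety is keeping all constants uniform in $\ep$: the dominant factor $e^{F(-R)/\ep}$ cancels in the scale-function ratio, while $\mu$ and $C_*$ are $\ep$-independent.
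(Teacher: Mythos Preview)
Your argument shares the paper's renewal skeleton: use Lemma~\ref{L:infrominf} to reduce to starting points in $[-R,x]$, bound the exit time from a finite interval to the left of $x$, show the probability of exiting on the wrong (left) side is at most $1/2$, and close with the strong Markov property. The difference lies in how the two key estimates are obtained. The paper bounds the exit time $E^{-R}[\tau^X_{J^c}]$ by invoking Theorem~\ref{T:cantdance} and bounds the leftward-exit probability $p_\ep$ via Lemma~\ref{L:whichside}, both Freidlin--Wentzell large-deviations statements with their own hypotheses to check. You replace these with a direct It\^o drift bound and the explicit one-dimensional scale function; this is more elementary and self-contained, and the cancellation of the dominant factor $e^{F(-R)/\ep}$ in the scale-function ratio is a particularly clean way to get an $\ep$-uniform bound on the exit probability.

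One caveat: the claim $\mu=\inf_{[-R-\de,x]}(-F')>0$ does not follow from ``$F$ strictly decreasing on $(-\infty,\wt x_0)$'' alone; a $C^3$ strictly decreasing function can have isolated derivative zeros (for instance $F'(z)\sim -(z-z_0)^2$ near some $z_0$). If such a $z_0$ lay in $(-\infty,x)$ your drift bound would collapse --- and in fact the lemma itself would fail there, since a scaling argument shows the crossing time past $z_0$ is of order $\ep^{-1/3}$. The intended setting, implicit also in the paper's own proof, is that $\wt x_0,0,\wt x_1$ are the only critical points of $F$; under that reading $F'<0$ throughout $(-\infty,\wt x_0)$ and your argument is complete.
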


\begin{proof}
    Choose $R>|x|$ as in Lemma \ref{L:infrominf}, so that there exists $C_1>0$
    such that $E^y[\tau^X_{-R}]\le C_1$ for all $y<-R$ and all $\ep\in (0,1)$.  

    Suppose $-R<x<\wt x_0$ and $\ep\in (0,1)$. Let $J=(-R-1,x)$. Since
    $\tau^X_{J^c}\le\tau^X_x$ $P^{-R}$-a.s., the strong Markov property gives
    \[
        E^{-R}[\tau^X_x]
            = E^{-R}[\tau^X_{J^c}] + E^{-R}[E^{X(\tau^X_{J^c})}[\tau^X_x]]
            = E^{-R}[\tau^X_{J^c}] + p_{\ep}E^{-R-1}[\tau^X_x],
    \]
    where $p_{\ep}=P^{-R}(X(\tau^X_{J^c})=-R-1)$. Also by the strong Markov
    property and Lemma \ref{L:infrominf},
    \[
        E^{-R-1}[\tau^X_x] = E^{-R-1}[\tau^X_{-R}] + E^{-R}[\tau^X_x]
            \le C_1 + E^{-R}[\tau^X_x].
    \]
    Thus,
    \[
        E^{-R}[\tau^X_x]
            \le \frac{E^{-R}[\tau^X_{J^c}] + p_{\ep}C_1}{1-p_{\ep}}.
    \]
    By Theorem \ref{T:cantdance}, there exists $C_2>0$, $T>0$, and $\ep_0\in
    (0,1)$ such that for all $\ep\in (0,\ep_0)$,
    \[
        E^{-R}[\tau^X_{J^c}] = \int_0^{\infty} P(\tau^X_{J^c}>t)\,dt\le T
            + \int_T^{\infty} e^{-\ep^{-2}C_2(t-T)}\,dt
            \le T + \frac {\ep_0^2}{C_2} =: C_3.
    \]
    Choose $0<r<|\wt x_0|$ such that $F(\wt x_0+r)<F(-R-1)$, and choose
    $\ga<F(-R-1)-F(\wt x_0+r)$.  By Lemma \ref{L:whichside}, making $\ep_0$
    smaller, if necessary, we have
    \[
        p_{\ep} \le P^{-R}(X(\tau^X_{(-R-1,\wt x_0+r)^c}) = -R - 1) \le 
            \exp\left({-\frac{F(-R - 1) - F(\wt x_0 + r) - \ga}\ep}\right),
    \]
    for all $\ep\in(0,\ep_0)$. By making $\ep_0$ even smaller, if necessary, we
    have $p_{\ep}<1/2$ for all $\ep\in(0,\ep_0)$. Thus,
    \[
        E^{-R}[\tau^X_x] \le 2C_3 + C_1 =: C_4,
    \]
    for all $\ep\in(0,\ep_0)$.

    Now, if $y<-R<x<x_0$, then
    \[
        E^y[\tau^X_x] = E^y[\tau^X_{-R}] + E^{-R}[\tau^X_x] \le C_1 + C_4,
    \]
    for all $\ep\in(0,\ep_0)$, and if $-R\le y<x<x_0$, then
    \[
        C_4 \ge E^{-R}[\tau^X_x] = E^{-R}[\tau^X_y] + E^y[\tau^X_x]
            \ge E^y[\tau^X_x],
    \]
    for all $\ep\in(0,\ep_0)$.
\end{proof}

\begin{lemma}\label{L:pastmin}
    For all $\wt x_0<x<0$ and all $\de>0$, there exists $C>0$ and $\ep_0>0$ such
    that for all $0<\ep<\ep_0$ and all $y<x$, we have
    \[
        E^y[\tau^X_x]
            \le C\exp\left({\frac {1 + \de}\ep(F(x) - F(\wt x_0))}\right).
    \]
\end{lemma}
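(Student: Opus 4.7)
The strategy is a cycle-decomposition argument. Starting from $y<x$, the diffusion is quickly pulled into the well at $\wt x_0$, and the expected time to hit $x$ should be dominated by the Freidlin--Wentzell cost $\exp(L/\ep)$, where $L:=F(x)-F(\wt x_0)>0$. I would estimate this by counting excursions out of an interval slightly larger than $[\wt x_0,x]$: each excursion has a controlled mean duration (Lemma \ref{L:escape}) and exits past $x$ with a quantifiable probability (Lemma \ref{L:whichside}); after a failed excursion the process returns to a reference point in bounded time (Lemma \ref{L:intomin}).

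Given $\de>0$, I would first pick $a_1<a<\wt x_0$, $x<x_1<0$, and $\de',\ga>0$ so small that $F(a_1)<F(x)<F(x_1)$ and
\[
    (F(x_1)-F(x)) + \de'(F(a_1)-F(\wt x_0)) + \ga < \de L;
\]
this is possible by the continuity of $F$ at $x$ and at $\wt x_0$. Set $G=(a_1,x_1)$. Then Lemma \ref{L:escape}, applied with $\wt a=a$ and $\wt b=x$, yields for all sufficiently small $\ep$ and every $y'\in[a,x]$,
\[
    E^{y'}[\tau^X_{G^c}]
        \le M := \exp\bigl((1+\de')(F(a_1)-F(\wt x_0))/\ep\bigr),
\]
and Lemma \ref{L:whichside}, applied with $K=[a,x]$ (compact and contained in $G$) and with the high side being $x_1$, yields
\[
    P^{y'}(X(\tau^X_{G^c})=x_1)
        \ge p := \exp\bigl(-(F(x_1)-F(a_1)+\ga)/\ep\bigr).
\]
Finally, Lemma \ref{L:intomin} with the point ``$x$'' there taken to be $a$ provides a constant $C_0$ with $E^{y'}[\tau^X_a]\le C_0$ uniformly in $y'<a$ and in small $\ep$.

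Next I would set up the cycle times. Let $T_0=\tau^X_a$ if $y<a$ and $T_0=0$ otherwise, and recursively set $S_k=\inf\{t\ge T_k:X(t)\notin G\}$, stopping at the first $N$ with $X(S_N)=x_1$, and otherwise $T_{k+1}=\inf\{t\ge S_k:X(t)=a\}$. By path continuity, since $X(S_N)=x_1>x$, we have $\tau^X_x\le S_N$. By construction $X(T_k)\in[a,x]$ for every $k\le N$, so the strong Markov property combined with the three bounds above gives $P^y(N\ge k)\le(1-p)^k$ together with uniform per-cycle bounds $E[S_k-T_k\mid\cF_{T_k}]\le M$ and $E[T_{k+1}-S_k\mid\cF_{S_k},N\ge k+1]\le C_0$. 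A standard Wald-type summation then yields
\[
    E^y[\tau^X_x] \le E^y[S_N] \le C_0 + \tfrac{1}{p}(M+C_0),
\]
and a direct computation of $M/p$ uses the parameter inequality above to conclude that this is at most $C\exp((1+\de)L/\ep)$ for a suitable constant $C$, uniformly in $y<x$.

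The main obstacle is the bookkeeping: one must ensure that at each restart time $T_k$ the process lies in a fixed compact set (namely $[a,x]$) on which both Lemma \ref{L:escape} and Lemma \ref{L:whichside} deliver bounds independent of $k$, so that the excursion counts and durations can be combined into a geometric-type sum. A related subtlety is that after a failed excursion the process exits $G$ at $a_1$ and may wander well below $a_1$ before returning to $a$; Lemma \ref{L:intomin} is exactly what is needed to control this return time uniformly. The two boundary cases $y<a$ and $y\in[a,x)$ are handled together, since $T_0$ is defined to put the process in $[a,x]$ in both.
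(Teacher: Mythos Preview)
Your argument is correct, but it takes a genuinely different route from the paper's. You place the exit window $(a_1,x_1)$ tightly around $[\wt x_0,x]$, so that each excursion is short (cost $\exp((1+\de')(F(a_1)-F(\wt x_0))/\ep)$) but succeeds only with the exponentially small probability $p$; the Wald-type sum over $\sim 1/p$ cycles then reconstitutes the correct exponent $(1+\de)L/\ep$ via your parameter inequality. The paper instead puts the left endpoint far out, taking $J=(-R-1,x)$ with $R$ large enough that $F(-R-1)>F(x)$. Then the exit at $x$ is the \emph{easy} side: Lemma~\ref{L:whichside} gives $p_\ep:=P^{-R}(X(\tau^X_{J^c})=-R-1)\le 1/2$, and a single application of Lemma~\ref{L:escape} already yields $E^{-R}[\tau^X_{J^c}]\le\exp((1+\de)L/\ep)$. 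The recursion is then the one-line identity from the proof of Lemma~\ref{L:intomin}, giving $E^{-R}[\tau^X_x]\le 2E^{-R}[\tau^X_{J^c}]+C_1$, with Lemma~\ref{L:infrominf} controlling the return from $-R-1$ to $-R$. Your approach is closer in spirit to the classical Freidlin--Wentzell cycle picture and makes the trade-off between excursion length and success probability explicit; the paper's choice sidesteps the geometric sum entirely by making the ``failure'' probability bounded away from~$1$, at the price of invoking Lemma~\ref{L:infrominf} to handle the far-left tail.
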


\begin{proof}
    Suppose $\wt x_0<x<0$ and fix $\de>0$. Choose $R>|\wt x_0|$ as in Lemma
    \ref{L:infrominf}, so that there exists $C_1>0$ such that
    $E^y[\tau^X_{-R}]\le C_1$ for all $y<-R$ and all $\ep\in (0,1)$. By making
    $R$ larger, if necessary, we may assume $F(x)<F(-R-1)$. Let $J:=(-R-1,x)$.
    As in the proof of Lemma \ref{L:intomin},
    \[
        E^{-R}[\tau^X_x]
            \le \frac{E^{-R}[\tau^X_{J^c}] + p_{\ep}C_1}{1-p_{\ep}},
    \]
    where $p_{\ep}=P^{-R}(X(\tau^X_{J^c})=-R-1)$. Using Lemma \ref{L:whichside},
    we may choose $\ep_0>0$ such that $p_{\ep}\le 1/2$ for all
    $\ep\in(0,\ep_0)$, giving
    \[
        E^{-R}[\tau^X_x] \le 2E^{-R}[\tau^X_{J^c}] + C_1.
    \]
    As in the proof of Lemma \ref{L:intomin}, if $y<-R$, then
    \[
        E^y[\tau^X_x] = E^y[\tau^X_{-R}] + E^{-R}[\tau^X_x]
            \le E^{-R}[\tau^X_x] + C_1,
    \]
    and if $-R\le y$, then
    \[
        E^y[\tau^X_x] \le E^{-R}[\tau^X_y] + E^y[\tau^X_x]
            = E^{-R}[\tau^X_x] \le E^{-R}[\tau^X_x]+C_1.
    \]
    Thus,
    \[
        E^y[\tau^X_x] \le 2E^{-R}[\tau^X_{J^c}]+2C_1,
    \]
    for all $y<x$ and all $\ep\in(0,\ep_0)$.

    By Lemma \ref{L:escape}, making $\ep_0$ smaller if necessary, we have
    \[
        E^{-R}[\tau^X_{J^c}]
            \le \exp\left({\frac {1+\de}{2\ep}(F(x)-F(\wt x_0)})\right),
    \]
    for all $\ep\in(0,\ep_0)$, which proves the lemma with $C=2+2C_1$.
\end{proof}

\begin{lemma}\label{L:quasistat}
    Let $\vpi_{\eta}(dx)=|\eta (x)|1_{(-\infty ,r_{\ep})}(x)\vpi(dx)$ and
    $\wh\vpi=\vpi_{\eta}((-\infty,r_{\ep}))^{-1}\vpi_{\eta}$. It then follows
    that $P^{\wh\vpi}(\tau^X_{r_{\ep}}>t)=e^{-\la t}$ for all $t\ge 0$.
\end{lemma}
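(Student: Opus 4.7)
The proof follows the classical link between a Dirichlet eigenfunction of $A$ vanishing at $r_\ep$ and the quasi-stationary distribution of $X$ killed at $\{r_\ep\}$. The plan has two steps. First, I will establish the pointwise identity
\begin{equation*}
    E^x\bigl[|\eta(X(t))|1_{\{\tau^X_{r_\ep} > t\}}\bigr]
        = e^{-\la t}|\eta(x)|
        \quad\text{for all } x < r_\ep.
\end{equation*}
Second, I will integrate this against $\vpi$ on $(-\infty, r_\ep)$ and use reversibility of $\vpi$ to recognize the resulting integral as $\vpi_\eta((-\infty, r_\ep))\,P^{\wh\vpi}(\tau^X_{r_\ep} > t)$.

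For the first step, apply It\^o's formula to $e^{\la t}\eta(X(t))$. The drift and diffusion coefficients of $X$ together with the eigenvalue relation $\ep\eta'' - F'\eta' = -\la\eta$ cause the bounded-variation part to cancel, leaving
\begin{equation*}
    d\bigl(e^{\la t}\eta(X(t))\bigr)
        = \sqrt{2\ep}\,e^{\la t}\eta'(X(t))\,dW(t).
\end{equation*}
Thus $e^{\la t}\eta(X(t))$ is a local martingale; stopped at $t\wedge\tau^X_{r_\ep}$ it is bounded on $[0,T]$ by $e^{\la T}\|\eta\|_\infty$, which is finite by \cite[Proposition 3.7]{Kurtz2020}, so it is a true martingale. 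Optional stopping gives $\eta(x) = E^x[e^{\la(t\wedge\tau^X_{r_\ep})}\eta(X(t\wedge\tau^X_{r_\ep}))]$. Since $\eta(r_\ep) = 0$ by continuity, the $\{\tau^X_{r_\ep}\le t\}$ contribution vanishes. On $\{\tau^X_{r_\ep} > t\}$, continuity of $X$ together with $x<r_\ep$ force $X(t)<r_\ep$, so $\eta(X(t)) = -|\eta(X(t))|$ and $\eta(x) = -|\eta(x)|$. Rearranging yields the claimed identity.

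For the second step, $\vpi \propto e^{-F/\ep}\,dx$ is reversible for $X$, so $(X(s))_{s\in[0,t]}$ and $(X(t-s))_{s\in[0,t]}$ have the same joint law under $P^\vpi$; consequently, for any bounded measurable $f,g$ and any path functional $H$ invariant under time reversal on $[0,t]$,
\begin{equation*}
    \int f(x) E^x[g(X(t))H]\,\vpi(dx)
        = \int g(x) E^x[f(X(t))H]\,\vpi(dx).
\end{equation*}
Applying this with $f = 1_{(-\infty,r_\ep)}$, $g = |\eta|\,1_{(-\infty,r_\ep)}$, and $H = 1_{\{\tau^X_{r_\ep} > t\}}$, and noting that on $\{H = 1\}$ the path stays in $(-\infty, r_\ep)$, gives
\begin{equation*}
    \int_{-\infty}^{r_\ep} E^x\bigl[|\eta(X(t))|1_{\{\tau^X_{r_\ep} > t\}}\bigr]\vpi(dx)
        = \int_{-\infty}^{r_\ep}|\eta(x)|\,P^x(\tau^X_{r_\ep} > t)\,\vpi(dx).
\end{equation*}
The left side equals $e^{-\la t}\vpi_\eta((-\infty, r_\ep))$ by step one, while the right side equals $\vpi_\eta((-\infty, r_\ep))P^{\wh\vpi}(\tau^X_{r_\ep} > t)$; dividing through gives the conclusion.

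The main subtlety will be a clean justification of reversibility for the path-dependent functional $\{\tau^X_{r_\ep} > t\}$; alternatively one can bypass path reversal by invoking self-adjointness of the Dirichlet killed semigroup on $L^2(\vpi|_{(-\infty, r_\ep)})$, which is immediate from the symmetry of $A$ with respect to $\vpi$.
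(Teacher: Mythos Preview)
Your proof is correct but follows a different route from the paper's. The paper works with the killed semigroup $P^I_t$ on $I=(-\infty,r_\ep)$: it approximates $1_I$ by smooth $\ph_n$ vanishing at $r_\ep$, differentiates $h_n(t)=E^{\wh\vpi}[\ph_n(X^I(t))]$ in $t$ via the Kolmogorov equation $\partial_t P^I_t\ph_n=A^IP^I_t\ph_n$, and then integrates by parts against $\eta$ with respect to $\vpi$ (i.e.\ uses self-adjointness of $A$ on $L^2(\vpi)$) to obtain $h_n'(t)=-\la h_n(t)$, whence $h_n(t)=h_n(0)e^{-\la t}\to e^{-\la t}$. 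Your argument instead establishes the pointwise martingale identity $E^x[|\eta(X(t))|1_{\{\tau>t\}}]=e^{-\la t}|\eta(x)|$ via It\^o and then passes from this to the desired statement by time-reversal of the stationary diffusion. Both proofs rest on the same two ingredients---the eigenfunction equation and reversibility with respect to $\vpi$---but package them differently: the paper's version is analytic (generator plus integration by parts) while yours is probabilistic (It\^o plus path reversal). Your Step~1 is arguably cleaner, since it avoids any regularity questions about $P^I_t\ph_n$ and boundary terms; on the other hand, the paper's approach sidesteps the need to justify time-reversal for the path functional $\{\tau>t\}$, which you correctly flag as the main technical point. Your suggested alternative---symmetry of the killed semigroup on $L^2(\vpi|_I)$---is precisely what the paper uses, so the two arguments converge there.
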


\begin{proof}
    Let $I=(-\infty,r_{\ep})$. Let $X^I$ denote $X$ killed upon leaving $I$.
    Note that $X^I$ with $X^I(0)=x$ solves the martingale problem for
    $(A^I,\de_x)$, where $A^I=\{(f,Af):f\in C_c^{\infty}(\bR),f(r)=0\}$. Choose
    $\ph_n\in C_c^{\infty}(\bR)$ such that $0\le\ph_n\le 1$, $\ph_n(r)=0$, and
    $\ph_n\to 1_I$ pointwise. Then
    \[
        P^{\wh\vpi}(\tau_r>t) = P^{\wh\vpi}(X^I(t) \in I)
            = E^{\wh\vpi}[1_I(X^I(t))]
            = \lim_{n\to\infty}h_n(t),
    \]
    where $h_n(t)=E^{\wh\vpi}[\ph_n(X^I(t))]$. Let $P^I_tf(x)=E^x[f(X^I(t))]$.
    Fix $t\ge 0$ and let $\psi_n=P^I_t\ph_n$. Then
    \[
        h_n(t) = \int_I\psi_n\,d\wh\vpi
            = -\frac 1{\vpi_{\eta}(I)}\int_I\psi_n\eta\,d\vpi,
    \]
    so that
    \begin{multline*}
        h_n'(t) = -\frac 1{\vpi_{\eta}(I)}\int_I (A^I\psi_n)\eta\,d\vpi
            = -\frac 1{\vpi_{\eta}(I)}
                \int_I (\ep\psi_n'' - F'\psi_n')\eta\,d\vpi\\
            = -\frac 1{\vpi_{\eta}(I)}\int_I \psi_n(\ep\eta'' - F'\eta')\,d\vpi
            = \frac\la{\vpi_{\eta}(I)}\int_I \psi_n\eta\,d\vpi
            = -\la h_n(t).
    \end{multline*}
    Thus, $h_n(t)=h_n(0)e^{-\la t}$. Note that $h_n(0)=\int_I\ph_n\,d\wh\vpi \to
    \wh\vpi(I)=1$ as $n\to\infty$. It therefore follows that $P^{\wh\vpi}
    (\tau_r > t)=e^{-\la t}$.
\end{proof}

\begin{thm}\label{T:nodeloc}
    Let $x\in (\wt x_0,0)$ satisfy $F(x)-F(\wt x_0)<F(0)-F(x_1)$. Then there
    exists $\ep_0>0$ such that for all $0<\ep<\ep_0$, we have $x<r_{\ep}$.
\end{thm}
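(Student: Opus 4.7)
The plan is to argue by contradiction. Suppose there is a sequence $\ep_n \downarrow 0$ along which $r_{\ep_n} \le x$; I will derive an upper bound on $1/\la$ that is inconsistent with the asymptotic formula \eqref{evasym}.

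The first step is to invoke Lemma \ref{L:quasistat}, which identifies a quasi-stationary distribution $\wh\vpi$ supported on $(-\infty, r_\ep)$ satisfying $E^{\wh\vpi}[\tau^X_{r_\ep}] = 1/\la$. Since $X$ has continuous paths, whenever $y < r_\ep \le x$ the path of $X$ started at $y$ must cross $r_\ep$ before reaching $x$, so $\tau^X_{r_\ep} \le \tau^X_x$ almost surely. Integrating against $\wh\vpi$ (whose support lies inside $(-\infty, x)$ under our contradiction hypothesis) yields
\[
    \frac{1}{\la} \;=\; E^{\wh\vpi}[\tau^X_{r_\ep}] \;\le\; \sup_{y < x} E^y[\tau^X_x].
\]

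The second step is to apply Lemma \ref{L:pastmin}. Given the hypothesis $F(x) - F(\wt x_0) < F(0) - F(x_1)$, I would choose $\de > 0$ so small that $(1+\de)(F(x) - F(\wt x_0)) < F(0) - F(x_1)$; Lemma \ref{L:pastmin} then produces a constant $C>0$ such that $\sup_{y<x} E^y[\tau^X_x] \le C\exp((1+\de)(F(x) - F(\wt x_0))/\ep)$ for all sufficiently small $\ep$. On the other hand, \eqref{evasym} gives $1/\la \ge c\exp((F(0) - F(x_1))/\ep)$ for some $c > 0$ and all small $\ep$. By the choice of $\de$, the right-hand sides differ by a factor that tends to $0$ exponentially fast, which contradicts the chain of inequalities above once $\ep$ is small enough.

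The main obstacle is really packaging the Freidlin--Wentzell escape-time estimate into a form that compares cleanly with the asymptotic decay rate of $\la$. Both the quasi-stationarity identity (Lemma \ref{L:quasistat}), which converts the analytic quantity $1/\la$ into a probabilistic hitting time, and the uniform exit-time upper bound (Lemma \ref{L:pastmin}), which holds for every starting point $y < x$, have already done this work. Given them, the argument reduces to a pair of elementary exponential inequalities, and the role of the hypothesis $F(x) - F(\wt x_0) < F(0) - F(x_1)$ is simply to guarantee that there is slack to absorb the $(1+\de)$ factor from Lemma \ref{L:pastmin}.
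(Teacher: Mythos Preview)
Your proposal is correct and follows essentially the same approach as the paper: both argue by contradiction, use Lemma~\ref{L:quasistat} to identify $1/\la$ with $E^{\wh\vpi}[\tau^X_{r_\ep}]$, bound this above by $\sup_{y<x}E^y[\tau^X_x]$ via path continuity, apply Lemma~\ref{L:pastmin} with a $\de$ chosen to exploit the strict inequality in the hypothesis, and then compare against the lower bound on $1/\la$ coming from \eqref{evasym}. The only cosmetic difference is that you phrase the contradiction along a sequence $\ep_n\downarrow 0$ whereas the paper fixes an $\ep_0$ and argues for each $\ep<\ep_0$.
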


\begin{proof}
    Choose $\de>0$ such that $(1+\de)(F(x)-F(\wt x_0))<F(0)-F(x_1)$. By Lemma
    \ref{L:pastmin}, there exists $\ep_0>0$ and $C_1>0$ such that
    \[
        E^y[\tau^X_x]
            \le C_1\exp\left({\frac{1 + \de}\ep(F(x)-F(\wt x_0)})\right),
    \]
    for all $\ep\in(0,\ep_0)$ and all $y<x$. By \eqref{evasym}, there exists a
    constant $C_2>0$, not depending on $\ep$, such that $\la\le
    C_2e^{-(F(0)-F(x_1))/\ep}$. By making $\ep_0$ smaller if necessary, we may
    assume
    \[
        \ep\log(C_1C_2) < F(0) - F(x_1) - (1 + \de)(F(x) - F(\wt x_0)),
    \]
    for all $\ep\in(0,\ep_0)$.

    Fix $\ep<\ep_0$. Suppose $r_{\ep}\le x$. By Lemma \ref{L:quasistat},
    \begin{multline*}
        C_2^{-1}\exp\left({\frac1\ep(F(0) - F(x_1))}\right) \le \la^{-1}
            = E^{\wh\vpi}[\tau^X_{r_\ep}]
            = \int_{-\infty}^{r_\ep}E^y[\tau^X_{r_\ep}]\wh\vpi(dy)\\
        \le \int_{-\infty}^{r_\ep}E^y[\tau^X_x]\wh\vpi(dy)
            \le \sup_{y<r_{\ep}}E^y[\tau^X_x]
            \le \sup_{y<x}E^y[\tau^X_x]
            \le C_1\exp\left({\frac{1 + \de}\ep(F(x) - F(\wt x_0))}\right),
    \end{multline*}
    which implies
    \[
        \exp\left({
                \frac{F(0) - F(x_1) - (1 + \de)(F(x) - F(\wt x_0))}\ep
            }\right) \le C_1C_2,
    \]
    a contradiction.
\end{proof}

\begin{cor}\label{C:nodeloc}
    Suppose $F(\wt x_0)<F(\wt x_1)$, so that $x_0=\wt x_0$ and $x_1=\wt x_1$.
    Choose $\xi\in(x_0,0)$ such that $F(\xi)-F(x_0)=F(0)-F(x_1)$. Then for all
    $\de>0$, there exists $\ep_0>0$ such that $r_\ep\in(\xi -\de,\de)$ for all
    $0<\ep<\ep_0$.
\end{cor}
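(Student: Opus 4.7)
My plan is to split the corollary into the two one-sided bounds $r_\ep > \xi - \de$ and $r_\ep < \de$, obtaining each from the same reasoning used to prove Theorem~\ref{T:nodeloc}, but applied on the appropriate half-line.

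For the lower bound, pick any $x_\star \in (\xi - \de, \xi)$. Since $F$ is strictly increasing on $(x_0, 0)$ and, by construction, $F(\xi) - F(x_0) = F(0) - F(x_1)$, we have $F(x_\star) - F(x_0) < F(0) - F(x_1)$. Theorem~\ref{T:nodeloc} applied at $x_\star$ then yields $\ep_1 > 0$ such that $r_\ep > x_\star > \xi - \de$ for all $\ep \in (0, \ep_1)$.

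For the upper bound, the first task is to state and prove the mirror counterparts of Lemmas~\ref{L:pastmin} and~\ref{L:quasistat} on the right half-line: for any $x_\star \in (0, \wt x_1)$ and any $\de' > 0$ there exist $C, \ep_1 > 0$ with
\[
    E^y[\tau^X_{x_\star}] \le C\exp\!\left(\frac{1+\de'}{\ep}(F(x_\star) - F(\wt x_1))\right)
\]
for all $y > x_\star$ and $\ep \in (0, \ep_1)$; and, with $\vpi_\eta^+(dx) = \eta(x)1_{(r_\ep, \infty)}(x)\vpi(dx)$ and $\wh\vpi^+ = \vpi_\eta^+((r_\ep,\infty))^{-1}\vpi_\eta^+$, one has $P^{\wh\vpi^+}(\tau^X_{r_\ep} > t) = e^{-\la t}$. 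Both go through verbatim after the substitution $x \mapsto -x$, using the reflection symmetry of the standing hypotheses on $F$ emphasized just before the statement of Lemma~\ref{L:eigenintrep}; the killed martingale-problem setup in Lemma~\ref{L:quasistat} works equally well on $I = (r_\ep, \infty)$ with cutoffs $\ph_n$ of growing compact support converging to $1_I$.

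Given these mirrors, fix $x_\star \in (0, \de \wedge \wt x_1)$. Since $F$ is strictly decreasing on $(0, \wt x_1)$ and $x_1 = \wt x_1$, we have $F(x_\star) - F(\wt x_1) < F(0) - F(\wt x_1) = F(0) - F(x_1)$, so we may choose $\de' > 0$ small enough that $(1+\de')(F(x_\star) - F(\wt x_1)) < F(0) - F(x_1)$. Assume toward a contradiction that $r_\ep \ge x_\star$. For any $y > r_\ep$, the process started at $y$ must cross $r_\ep$ before reaching $x_\star$, so $\tau^X_{r_\ep} \le \tau^X_{x_\star}$ under $P^y$. Combining the two mirrors gives
\[
    \la^{-1} = E^{\wh\vpi^+}[\tau^X_{r_\ep}] \le \sup_{y > x_\star} E^y[\tau^X_{x_\star}] \le C\exp\!\left(\frac{(1+\de')(F(x_\star) - F(\wt x_1))}{\ep}\right),
\]
which, together with $\la^{-1} \ge C_2^{-1} e^{(F(0) - F(x_1))/\ep}$ from \eqref{evasym}, produces a contradiction for $\ep$ sufficiently small, forcing $r_\ep < x_\star < \de$. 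The only real obstacle is the routine bookkeeping needed to verify that the proofs of Lemmas~\ref{L:pastmin} and~\ref{L:quasistat} transcribe faithfully to the positive half-line under $x \mapsto -x$; conceptually there is nothing new beyond the argument already used in Theorem~\ref{T:nodeloc}.
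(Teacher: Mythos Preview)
Your proposal is correct and follows essentially the same approach as the paper. The only difference is one of packaging: for the upper bound the paper simply applies Theorem~\ref{T:nodeloc} as a black box to the reflected potential $x\mapsto F(-x)$ (so that the nodal point becomes $-r_\ep$ and one takes $x=-\de$), whereas you unwind that reflection and restate the mirror versions of Lemmas~\ref{L:pastmin} and~\ref{L:quasistat} on the right half-line before rerunning the contradiction argument; the content is identical.
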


\begin{proof}
    Without loss of generality, we may assume $\xi -\de>x_0$ and $\de<x_1$.
    Taking $x=\xi -\de$ in Theorem \ref{T:nodeloc}, we may choose $\ep_1$ such
    that $\xi -\de<r_\ep$ for all $\ep<\ep_1$. For the upper bound on $r_\ep$,
    we apply Theorem \ref{T:nodeloc} to $x\mapsto F(-x)$. In this case, the
    theorem says that if $x\in(-x_1,0)$ satisfies $F(-x)-F(x_1)<F(0)-F(x_0)$,
    then there exists $\ep_2>0$ such that $x<\wt r_\ep$ for all $\ep<\ep_2$,
    where $\wt r_\ep$ is the nodal point of $x\mapsto-\eta(-x)$, that is, $\wt
    r_\ep=-r_\ep$. Taking $x=-\de$ and $\ep_0=\ep_1\wedge\ep_2$ finishes the
    proof.
\end{proof}

\subsection{Behavior near the minima}

Corollary \ref{C:nodeloc} divides the domain of the second eigenfunction,
$\eta$, into three intervals:  two infinite half-lines that each contain one of
the two minima, and a bounded interval separating the half-lines that contains
the nodal point. Our next order of business is to show that $\eta$ is
asymptotically flat on the infinite half-lines. Theorem \ref{T:stepest} gives
this result for the half-line containing $\wt x_0$. Applying Theorem
\ref{T:stepest} to $x\mapsto F(-x)$ gives the result for the half-line
containing $\wt x_1$.

We begin with a lemma. Recall $a_j,\wt a_j$ and $c_j,\wt c_j$ from 
\eqref{superquad1}, \eqref{superquad2}, and \eqref{superquad3}. In
applying this lemma, note that
\[
    \frac{\wt a_2}{\wt a_1}
        = \frac{a_2 + 2}{a_1 + 2}
        < \frac{2a_1}{a_1 + 2}
        < \frac {a_1}2,
\]
where the first inequality comes from $a_2<2a_1-2$ and the second from $a_1>2$.  

\begin{lemma}\label{L:Ftail}
    Let $x\in(\wt x_0,0)$. Suppose $p$ satisfies
    \[
        \frac2{a_1} < p < \frac{\wt a_1}{\wt a_2} \le 1.
    \]
    Then there exists $u_0<-1$ and $C>0$ such that
    \[
        e^{-F(u)/\ep}\int_u^x e^{F(v)/\ep}\,dv \le C\ep|u|^{-pa_1/2},
    \]
    for all $u<u_0$ and all $\ep>0$.
\end{lemma}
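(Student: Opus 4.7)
Let $R(u) := \int_u^x e^{(F(v) - F(u))/\ep}\,dv$; this is the quantity to be bounded. My plan is to split the integral at the midpoint $v = u/2$ (which, for $u$ negative enough, lies in $(u,\wt x_0)$) and estimate the two pieces separately.

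\emph{First piece ($v \in [u,u/2]$, a Laplace-type boundary layer).} On this interval $|v| \ge |u|/2$, so by \eqref{superquad1}, for $|u|$ sufficiently large one has $|F'(s)| \ge c\,|u|^{a_1/2}$ for all $s \in [u,u/2]$. Since $F$ is strictly decreasing on $(-\infty,\wt x_0)$, integration gives $F(v) - F(u) \le -c\,|u|^{a_1/2}(v-u)$, whence
\[
    \int_u^{u/2} e^{(F(v)-F(u))/\ep}\,dv
    \le \int_0^{|u|/2} e^{-c\,|u|^{a_1/2}\,t/\ep}\,dt
    \le \frac{\ep}{c\,|u|^{a_1/2}}
    \le C\,\ep\,|u|^{-pa_1/2},
\]
where the final step uses $p < 1$.

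\emph{Second piece ($v \in [u/2,x]$, exponentially small tail).} On $[u/2,\wt x_0]$, $F$ is decreasing, so $F(v) \le F(u/2)$. On $[\wt x_0,x]$, $F(v) \le F(x)$, and by \eqref{superquad3} we have $F(u/2) \gtrsim |u|^{\wt a_1} \to \infty$ while $F(x)$ is a fixed constant, so $F(v) \le F(u/2)$ on the entire interval $[u/2,x]$ for $|u|$ large. Applying the Step~1 estimate at $v = u/2$ gives $F(u/2) - F(u) \le -\tilde c\,|u|^{1+a_1/2}$, hence
\[
    \int_{u/2}^x e^{(F(v)-F(u))/\ep}\,dv
    \le 2|u|\,\exp\!\left(-\tilde c\,|u|^{1+a_1/2}/\ep\right).
\]

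\emph{Uniformity in $\ep$.} It remains to show that this tail is $\le C\,\ep\,|u|^{-pa_1/2}$ for \emph{all} $\ep > 0$. Writing $z := \tilde c\,|u|^{1+a_1/2}/\ep$ and substituting, the required inequality reduces to
\[
    \frac{2z}{C\,\tilde c} \le e^z\,|u|^{(1-p)a_1/2}, \qquad z > 0.
\]
The elementary bound $e^z \ge e\,z$ (valid for all $z > 0$) further reduces this to $2/(e\,C\,\tilde c) \le |u|^{(1-p)a_1/2}$. Since $(1-p)a_1/2 > 0$, this holds for $|u|$ large enough, uniformly in $z$ (equivalently, uniformly in $\ep$). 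Choosing $u_0$ negative enough to validate all three ``$|u|$ large'' conditions and combining the two pieces yields the lemma.

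\emph{Main obstacle.} The tricky step is the uniformity in $\ep$ in the tail estimate. A naive split into ``small $\ep$'' (where exponential decay clearly works) and ``large $\ep$'' (where the trivial bound $R(u)\le x-u \le 2|u|$ is compared to $C\ep|u|^{-pa_1/2}$) leaves a middle range uncovered when $p$ is close to $1$. The observation $e^z \ge ez$ closes that gap in a single stroke, using only that $(1-p)a_1/2 > 0$; this avoids any need for iterated integration by parts or super/sub-solution ODE comparison arguments.
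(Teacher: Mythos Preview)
Your proof is correct and follows the same broad strategy as the paper---split the integral at an intermediate point, use the lower bound on $|F'|$ from \eqref{superquad1} on the inner piece, and kill the outer piece by an exponential-versus-polynomial comparison---but the implementation differs in several ways.

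The paper splits at $v=-|u|^p$ (not at $u/2$) and handles the inner piece via Cauchy's generalized mean value theorem applied to $G(u)=\int_u^x e^{F/\ep}$ and $H(u)=e^{F(u)/\ep}$: this yields $G(u)/H(u)\le G(-|u|^p)/H(u)+\ep/|F'(\theta)|$ for some $\theta\in(u,-|u|^p)$, and since $|\theta|\ge|u|^p$ one gets the target power $\ep|u|^{-pa_1/2}$ directly. Your direct integration of the derivative bound instead gives the sharper $\ep|u|^{-a_1/2}$, which you then relax to $\ep|u|^{-pa_1/2}$ using $p\le1$. For the outer piece the paper needs $F(-|u|^p)-F(u)\lesssim-|u|^{\wt a_1}$, which it extracts from \eqref{superquad3} \emph{and this is precisely where the hypothesis $p<\wt a_1/\wt a_2$ is used} (one needs $|u|^{p\wt a_2}=o(|u|^{\wt a_1})$). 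Your split at $u/2$ gets the corresponding decay $F(u/2)-F(u)\lesssim-|u|^{1+a_1/2}$ straight from the Step~1 derivative bound, so you never invoke \eqref{superquad3} and never use $p<\wt a_1/\wt a_2$---only $p<1$. In that sense your argument proves a marginally stronger statement with less input. The final uniformity-in-$\ep$ step is equivalent in spirit: the paper appeals to boundedness of $y\mapsto y^pe^{-\wt c_1y/4}$ on $[0,\infty)$, you to the tangent-line inequality $e^z\ge ez$; both amount to $\sup_{z>0} z\,e^{-cz}<\infty$.
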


\begin{proof}
    Choose $t<\wt x_0$ such that $F(t)=F(x)$. Using \eqref{superquad1}, we may
    choose $u_0<-1$ and $C'>0$ such that
    \begin{enumerate}[(i)] 
        \item   $-|u_0|^p<t$,\label{item1}
        \item   $F(\th)>0$ and $|F'(\th)|\ge C'|\th|^{a_1/2}$, for all
                $\th<-|u_0|^p$, and\label{item2}
        \item   $\ds{\wt c_3|u|^{p\wt a_2-\wt a_1}<\frac{\wt c_1}2}$ and $
                \ds{\wt c_4-\wt c_2\le\frac{\wt c_1}4|u|^{\wt a_1}}$, for all
                $u<u_0$.\label{item3}
    \end{enumerate}
    Let $G(u)=\int_u^xe^{F(v)/\ep}\,dv$ and $H(u)=e^{F(u)/\ep}$. Fix $u<u_0$ and
    let $v=-|u|^p<-|u_0|^p$.  Note that $u<v$.

    By Cauchy's generalized law of the mean,
    \[
        \frac{G(u) - G(v)}{H(u) - H(v)} = \frac{G'(\th)}{H'(\th)},
    \]
    for some $u<\th<v$. From this, we get
    \begin{align*}
        \frac{G(u)}{H(u)} &= \frac {G(v)}{H(u)} + \frac {G'(\th)}{H'(\th)}
            \left({1 - \frac {H(v)}{H(u)}}\right)\\
        &= \frac{G(v)}{H(u)} + \frac\ep{|F'(\th)|}
            \left({1 - \frac {H(v)}{H(u)}}\right)\\
        &\le \frac{G(v)}{H(u)} + \frac\ep{|F'(\th)|}.
    \end{align*}
    By \eqref{item2},
    \[
        \frac\ep{|F'(\th)|} \le \frac\ep{C'|\th|^{a_1/2}}
            \le \frac\ep{C'|v|^{a_1/2}}
            = \frac\ep{C'|u|^{pa_1/2}}.
    \]
    It therefore suffices to show that
    \begin{equation}\label{laststep}
        \frac{G(v)}{H(u)} \le C''\ep|u|^{-pa_1/2},
    \end{equation}
    for some constant $C''$ that does not depend on $u$ or $\ep$.

    By \eqref{superquad3},
    \begin{align*}
        F(v) - F(u) &\le \wt c_3|v|^{\wt a_2} + \wt c_4
                - \wt c_1|u|^{\wt a_1} - \wt c_2\\
        &= (\wt c_3|u|^{p\wt a_2-\wt a_1} - \wt c_1)|u|^{\wt a_1}
                + \wt c_4 - \wt c_2
            \le -\frac{\wt c_1}4 |u|^{\wt a_1},
    \end{align*}
    where the last inequality comes from \eqref{item3}. By \eqref{item1}, we
    have $v<t$, so that $F(v)>F(w)$ for all $w\in (v,x)$. Hence,
    \begin{align*}
        \frac{G(v)}{H(u)} = \int_v^x e^{(F(w)-F(u))/\ep}\,dw
            &\le |v|e^{(F(v)-F(u))/\ep}\\
        &\le |u|^p\exp\left({-\frac{\wt c_1}{4\ep}|u|^{\wt a_1}}\right)\\
        &= \left({\ep|u|^{-pa_1/2}}\right) \frac1\ep |u|^{p\wt a_1}
            \exp\left({-\frac{\wt c_1}{4\ep}|u|^{\wt a_1}}\right).
    \end{align*}
    Since $x\mapsto x^pe^{-\wt c_1x/4}$ is bounded on $[0,\infty)$, this proves
    \eqref{laststep}.
\end{proof}

\begin{thm}\label{T:stepest} 
    Let $x\in(\wt x_0,0)$ satisfy $F(x)-F(\wt x_0)<F(0)-F(x_1)$. Then there
    exists $C>0$ and $\ep_0>0$ such that for all $0<\ep<\ep_0$,
    \begin{equation}\label{stepest}
        \left|{1 - \frac{\eta(x)}{\eta(-\infty)}}\right|
            \le \frac C\ep\exp\left({
                -\frac1\ep(F(0) - F(x_1) - F(x) + F(\wt x_0))
            }\right).
    \end{equation}
\end{thm}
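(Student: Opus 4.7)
The plan is to start from the integral representation \eqref{eigenintrep2}, which rearranges to
\[
1 - \frac{\eta(x)}{\eta(-\infty)}
= \frac{\la}{\ep\,\eta(-\infty)}\int_{-\infty}^{x}\!\int_u^{x}\exp\!\left(\frac{F(v)-F(u)}{\ep}\right)\eta(u)\,dv\,du.
\]
Since the hypothesis $F(x)-F(\wt x_0)<F(0)-F(x_1)$ is exactly the hypothesis of Theorem \ref{T:nodeloc}, we have $x<r_\ep$ for $\ep$ small enough, so $\eta(u)<0$ on $(-\infty,x]$ and, by the monotonicity of $\eta$ noted after Lemma \ref{L:eigenintrep}, $|\eta(u)|\le|\eta(-\infty)|$ there. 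Cancelling signs yields
\[
\left|1 - \frac{\eta(x)}{\eta(-\infty)}\right|
\le \frac{\la}{\ep}\,J(\ep),\qquad
J(\ep):=\int_{-\infty}^{x}\!\int_u^{x}e^{(F(v)-F(u))/\ep}\,dv\,du.
\]
Since \eqref{evasym} supplies $\la\le C e^{-(F(0)-F(x_1))/\ep}$, the theorem will follow once I prove $J(\ep)\le C' e^{(F(x)-F(\wt x_0))/\ep}$ for all sufficiently small $\ep$.

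To estimate $J(\ep)$, write $K(u):=e^{-F(u)/\ep}\int_u^{x}e^{F(v)/\ep}\,dv$ and split the $u$-integral as $(-\infty,u_0]\cup[u_0,\wt x_0]\cup[\wt x_0,x]$, where $u_0$ is the threshold supplied by Lemma \ref{L:Ftail} with some admissible $p$. On $(-\infty,u_0]$, Lemma \ref{L:Ftail} gives $K(u)\le C\ep|u|^{-pa_1/2}$ with $pa_1/2>1$, so this part contributes $O(\ep)$. On $[u_0,\wt x_0]$, where $F$ is decreasing on $[u,\wt x_0]$ and increasing on $[\wt x_0,x]$, split the inner integral at $\wt x_0$ and use
\[
\int_u^{\wt x_0}e^{F(v)/\ep}\,dv\le(\wt x_0-u)\,e^{F(u)/\ep},\qquad
\int_{\wt x_0}^{x}e^{F(v)/\ep}\,dv\le(x-\wt x_0)\,e^{F(x)/\ep},
\]
to obtain $K(u)\le(\wt x_0-u)+(x-\wt x_0)\,e^{(F(x)-F(u))/\ep}\le C e^{(F(x)-F(\wt x_0))/\ep}$, since $F(u)\ge F(\wt x_0)$ on this interval. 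On $[\wt x_0,x]$, $F$ is increasing on $[u,x]$, so directly $K(u)\le(x-u)\,e^{(F(x)-F(u))/\ep}\le(x-\wt x_0)\,e^{(F(x)-F(\wt x_0))/\ep}$. Integrating each piece over a bounded $u$-interval (and absorbing the $O(\ep)$ tail contribution into the exponential, which diverges as $\ep\to0$) gives the desired bound on $J(\ep)$, and combining with the bound on $\la$ produces \eqref{stepest}.

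The main obstacle, and the reason Lemma \ref{L:Ftail} was proved beforehand, is controlling the tail $u\to-\infty$: the superquadratic growth of $F$ must be turned into an integrable estimate for $K(u)$, and the choice $p\in(2/a_1,\wt a_1/\wt a_2]$ is exactly what makes $|u|^{-pa_1/2}$ integrable at infinity. Once that tail is tamed, the remaining work on the bounded interval $[u_0,x]$ is routine Laplace-style bookkeeping, with the peak of the integrand of $K$ controlled at the critical point $\wt x_0$.
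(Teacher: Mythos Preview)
Your proposal is correct and follows essentially the same approach as the paper: start from \eqref{eigenintrep2}, use Theorem \ref{T:nodeloc} and the monotonicity of $\eta$ to replace $|\eta(u)|$ by $|\eta(-\infty)|$, bound $\la$ via \eqref{evasym}, control the tail $u\to-\infty$ with Lemma \ref{L:Ftail}, and handle the bounded $u$-range by elementary monotonicity bounds on $F$. The only cosmetic difference is that the paper partitions the bounded region at the auxiliary point $t<\wt x_0$ with $F(t)=F(x)$ (so that $F(v)\le F(u)$ holds on the whole inner integral for $u\le t$), whereas you partition at $\wt x_0$ and split the inner integral there as well; both yield the same $e^{(F(x)-F(\wt x_0))/\ep}$ bound.
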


\begin{proof}
    Again by \eqref{evasym}, there exists a constant $C_1>0$, not depending on
    $\ep$, such that $\la\le C_1e^{-(F(0)-F(x_1))/\ep}$.

    Let $\ep_0$ be as in Theorem \ref{T:nodeloc}, and let $\ep\in(0,\ep_0)$.
    Choose $t<\wt x_0$ such that $F(t)=F(x)$. By Theorem \ref{T:nodeloc},
    $x<r_\ep$. Since $\eta$ is increasing, $\eta(u)<0$ for all $u\le x$.
    Therefore, by \eqref{eigenintrep2},
    \begin{align*}
        0 < \eta(x) - \eta(-\infty) &= \frac\la\ep\int_{-\infty}^x\int_u^x
            e^{(F(v)-F(u))/\ep}|\eta(u)|\,dv\,du\\
        &\le \frac\la\ep|\eta(-\infty)|\int_{-\infty}^x\int_u^x
            e^{(F(v)-F(u))/\ep}\,dv\,du.
    \end{align*}
    Thus,
    \begin{align}
        \left|{1 - \frac{\eta(x)}{\eta(-\infty)}}\right|
            &\le \frac\la\ep\int_{-\infty}^x\int_u^x e^{(F(v)-F(u))/\ep}\,dv\,du
            \notag\\
        &\le \frac{C_1}\ep\,e^{-(F(0)-F(x_1))/\ep}\int_{-\infty}^x\int_u^x
            e^{(F(v)-F(u))/\ep}\,dv\,du.\label{stepestmid}
    \end{align}
    Choose $p$ as in Lemma \ref{L:Ftail}. Then there exist $u_0<0$ and $C_2>0$
    such that
    \[
        \int_{-\infty}^{u_0}\int_u^x e^{(F(v)-F(u))/\ep}\,dv\,du \le C_3\ep,
    \]
    where $C_3=C_2|u_0|^{1-pa_1/2}/(pa_1/2-1)$. By the proof of Lemma
    \ref{L:Ftail}, we have $u_0<t$, and so
    \[
        \int_{u_0}^t\int_u^x e^{(F(v)-F(u))/\ep}\,dv\,du
            \le \int_{u_0}^t (x - u)\,du\le |u_0|^2.
    \]
    Lastly,
    \[
        \int_t^x\int_u^x e^{(F(v)-F(u))/\ep}\,dv\,du
            \le \int_t^x\int_u^x e^{(F(x)-F(\wt x_0))/\ep}\,dv\,du
            \le |u_0|^2e^{(F(x)-F(\wt x_0))/\ep}.
    \]
    Thus,
    \begin{align*}
        \int_{-\infty}^x\int_u^x e^{(F(v)-F(u))/\ep}\,dv\,du
            &\le C_3\ep + |u_0|^2 + |u_0|^2e^{(F(x)-F(\wt x_0))/\ep}\\
        &\le C_4e^{(F(x)-F(\wt x_0))/\ep},
    \end{align*}
    where $C_4=(C_3\ep_0+|u_0|^2)e^{-(F(x)-F(\wt x_0))/\ep_0}+|u_0|^2$. Finally,
    combining this with \eqref{stepestmid}, we obtain \eqref{stepest}, where
    $C=C_1C_4$.
\end{proof}

\subsection{Behavior near the nodal point}

From this point forward, for definiteness, we assume $F(\wt x_0)<F(\wt x_1)$, so
that $x_0=\wt x_0$ and $x_1=\wt x_1$.

Having shown that $\eta$ is asymptotically flat near the minima, we would now
like to show that it behaves, weakly, like a simple function that is constant on
the domains of attraction defined in \eqref{domatt}. That is, we want to show
that $\int_{D_0}\eta\,d\vpi\sim\eta (x_0)\vpi(D_0)$ and
$\int_{D_1}\eta\,d\vpi\sim\eta(x_1)\vpi(D_1)$. (Note that we cannot use Theorem
\ref{T:Lapmeth} since $\eta$ depends on $\ep$.) Combined with
$\int\eta\,d\vpi=0$, this would give us the relative magnitudes of $\eta(x_0)$
and $\eta(x_1)$. By Theorem \ref{T:stepest}, this is equivalent to understanding
the relative magnitudes of $\eta(-\infty)$ and $\eta(\infty)$, respectively.

\begin{lemma}\label{L:bulkint}
    Choose $\de\in(0,x_1)$ such that $\xi-\de\in(x_0,0)$. Let $k$ be a positive
    integer and let $g:\bR\to\bR$ be bounded. If $g$ is continuous at $x_0$ and
    $x_1$, then
    \begin{equation}\label{bulkint1}
        \int_{-\infty}^{\xi -\de} g(x)|\eta(x)|^k e^{-F(x)/\ep}\,dx
            \sim g(x_0)|\eta (-\infty)|^k\sqrt{
                \frac{2\pi\ep}{F''(x_0)}}\,e^{-F(x_0)/\ep
            },
    \end{equation}
    and
    \begin{equation}\label{bulkint2}
        \int_\de^\infty g(x)|\eta (x)|^k e^{-F(x)/\ep}\,dx
            \sim g(x_1)|\eta(\infty)|^k
                \sqrt{\frac{2\pi\ep}{F''(x_1)}}\,e^{-F(x_1)/\ep},
    \end{equation}
    as $\ep\to0$.
\end{lemma}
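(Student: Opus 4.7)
My plan is to carry out a Laplace asymptotic in which the $\ep$-dependent factor $|\eta|^k$ is first ``frozen'' near $x_0$ (resp.\ $x_1$) using Theorem \ref{T:stepest}, reducing to a standard Laplace integral. Since the reflection $x \mapsto -x$ converts \eqref{bulkint2} into \eqref{bulkint1} and simultaneously converts Theorem \ref{T:stepest} into its analogue near $\wt x_1$, I would only treat \eqref{bulkint1}. I would begin by choosing $\de' > 0$ small enough that $[x_0 - \de', x_0 + \de'] \subset (-\infty, \xi - \de)$, $x_0 + \de' \in (\wt x_0, 0)$, and $F(x_0 + \de') - F(x_0) < F(0) - F(x_1)$ (so that Theorem \ref{T:stepest} applies at $x_0 + \de'$), and then split
\[
    \int_{-\infty}^{\xi - \de} \;=\; \int_{-\infty}^{x_0 - \de'}
        \;+\; \int_{x_0 - \de'}^{x_0 + \de'}
        \;+\; \int_{x_0 + \de'}^{\xi - \de},
\]
aiming to show the central integral yields the stated asymptotic and the two tails are exponentially smaller than $\sqrt\ep\, e^{-F(x_0)/\ep}$.

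For the central integral, the key observation is that, by Corollary \ref{C:nodeloc}, $r_\ep > x_0 + \de'$ for all small $\ep$, so $\eta < 0$ on $[x_0 - \de', x_0 + \de']$; since $\eta$ is strictly increasing, $|\eta|$ is decreasing there, giving $|\eta(x_0 + \de')| \le |\eta(x)| \le |\eta(-\infty)|$. Theorem \ref{T:stepest} then forces $|\eta(x_0 + \de')|/|\eta(-\infty)| = 1 + o(1)$ (exponentially fast in $\ep$), so $|\eta(x)|^k = |\eta(-\infty)|^k(1 + o(1))$ uniformly on the central interval. Combined with the continuity of $g$ at $x_0$ (so $g(x) = g(x_0) + o(1)$ uniformly) and the usual Laplace asymptotic $\int_{x_0 - \de'}^{x_0 + \de'} e^{-F(x)/\ep}\,dx \sim \sqrt{2\pi\ep/F''(x_0)}\, e^{-F(x_0)/\ep}$ (valid since $F$ attains a nondegenerate strict minimum at $x_0$ on this interval), this produces the claimed leading term.

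For the outer tails I would use that $F$ is increasing on $(x_0, 0)$ and decreasing on $(-\infty, x_0)$, so $F(x) \ge F(x_0 \pm \de') > F(x_0)$ on the respective tails. The right tail $[x_0 + \de', \xi - \de]$, which has finite length and on which $g$, $|\eta|^k$ are bounded, is immediately $O(|\eta(-\infty)|^k e^{-(F(x_0) + c)/\ep})$ with $c = F(x_0 + \de') - F(x_0) > 0$. For the infinite left tail $(-\infty, x_0 - \de']$, I would split the exponent as $F(x)/\ep = F(x)/(2\ep) + F(x)/(2\ep)$, bound $e^{-F(x)/(2\ep)} \le e^{-F(x_0 - \de')/(2\ep)}$ on one copy, and use the superquadratic bound \eqref{superquad3} together with $\ep \le \ep_0 \le 1$ to control $\int_{-\infty}^{x_0 - \de'} e^{-F(x)/(2\ep)}\,dx$ by its value at $\ep = 1$, which is finite. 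This gives a bound of the form $O(|\eta(-\infty)|^k e^{-(F(x_0) + c')/\ep})$ for some $c' > 0$, again negligible.

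The main conceptual obstacle is exactly the one flagged in the discussion preceding the lemma: the integrand is $\ep$-dependent through $\eta$, so Theorem \ref{T:Lapmeth} cannot be invoked directly. Theorem \ref{T:stepest} is the tool designed to overcome this; its exponentially small error is absorbed into the $o(1)$ correction to the Laplace leading term, after which the remaining estimates are routine. The only point demanding care is ensuring that the tail integral on $(-\infty, x_0 - \de']$ does not blow up as $\ep \to 0$, which is handled cleanly by halving the exponent and invoking \eqref{superquad3}.
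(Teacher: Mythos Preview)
Your plan is correct and would yield the lemma, but it is substantially more elaborate than the paper's argument. The key observation you are missing is that the sandwich $|\eta(\xi-\de)|\le|\eta(x)|\le|\eta(-\infty)|$ already holds on the \emph{entire} interval $(-\infty,\xi-\de)$, not just on a small window around $x_0$: by Corollary~\ref{C:nodeloc} one has $r_\ep>\xi-\de$ for small $\ep$, so $\eta$ is negative and increasing on all of $(-\infty,\xi-\de)$. Moreover, $\xi-\de$ itself satisfies the hypothesis of Theorem~\ref{T:stepest} (because $F(\xi-\de)-F(x_0)<F(\xi)-F(x_0)=F(0)-F(x_1)$), so $|\eta(\xi-\de)|\sim|\eta(-\infty)|$. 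The paper therefore reduces to $g\ge 0$ via $g=g^+-g^-$, sandwiches $|\eta|^k$ globally, and applies Theorem~\ref{T:Lapmeth} once to $\int_{-\infty}^{\xi-\de}g(x)e^{-F(x)/\ep}\,dx$; there is no three-piece decomposition and no separate tail estimate, since the Laplace theorem already absorbs the infinite tail.

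Two small points about your version: (i) the sentence ``$g(x)=g(x_0)+o(1)$ uniformly'' is not correct on the fixed window $[x_0-\de',x_0+\de']$; what you actually need is to factor out $|\eta(-\infty)|^k$ via the uniform estimate $|\eta(x)|^k=|\eta(-\infty)|^k(1+o(1))$ and then apply Theorem~\ref{T:Lapmeth} to $\int g\,e^{-F/\ep}$ directly (the continuity of $g$ enters only through that theorem); (ii) in the left-tail bound, ``control by its value at $\ep=1$'' requires $F\ge 0$ there, which is not assumed---you should first subtract the minimum $F(x_0-\de')$ before invoking monotonicity in $\ep$. Both are easily repaired and do not affect the overall validity of your plan.
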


\begin{proof}
    By writing $g=g^{+}-g^{-}$, $g^{+}$ and $g^{-}$ nonnegative, we may assume
    without loss of generality that $g$ is nonnegative. By Corollary
    \ref{C:nodeloc} and the fact that $\eta$ is increasing, we have that, for
    $\ep$ sufficiently small, $|\eta(x)|\le|\eta(-\infty)|$ for all
    $x\in(-\infty,\xi -\de)$. Thus,
    \[
        \int_{-\infty}^{\xi -\de} g(x)|\eta (x)|^k e^{-F(x)/\ep}\,dx
            \le |\eta(-\infty)|^k
                \int_{-\infty}^{\xi -\de} g(x)e^{-F(x)/\ep}\,dx.
    \]
    Similarly,
    \[
        \int_{-\infty}^{\xi -\de} g(x)|\eta (x)|^k e^{-F(x)/\ep}\,dx
            \ge |\eta(\xi -\de)|^k
                \int_{-\infty}^{\xi -\de} g(x)e^{-F(x)/\ep}\,dx.
    \]
    Hence, by Theorem \ref{T:stepest},
    \[
        \int_{-\infty}^{\xi -\de} g(x)|\eta (x)|^k e^{-F(x)/\ep}\,dx
            \sim |\eta(-\infty)|^k
                \int_{-\infty}^{\xi -\de} g(x)e^{-F(x)/\ep}\,dx.
    \]
    By Theorem \ref{T:Lapmeth}, this proves \eqref{bulkint1}. Replacing $F$ with
    $x\mapsto F(-x)$, Theorem \ref{T:stepest} shows that
    $\eta(\de)\sim\eta(\infty)$. Thus, the same argument can be used to obtain
    \eqref{bulkint2}.
\end{proof}

\begin{lemma}\label{L:midint}
    There exists $\de_0>0$ such that for all $\de\in(0,\de_0)$,
    \[
        \int_{\xi -\de}^\de \eta(x)e^{-F(x)/\ep}\,dx
            = o\left({
                \int_{-\infty}^{\xi -\de} |\eta(x)|e^{-F(x)/\ep}\,dx
                + \int_\de^\infty |\eta(x)|e^{-F(x)/\ep}\,dx
            }\right),
    \]
    as $\ep\to0$.
\end{lemma}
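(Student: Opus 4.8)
The plan is to reduce the statement to two ``overshoot'' estimates by exploiting the identity $\int_{\bR}\eta\,e^{-F/\ep}\,dx=0$. This identity is $\int\eta\,d\vpi=0$, and it also follows from Lemma \ref{L:eigenintrep}: differentiating \eqref{eigenintrep1} and \eqref{eigenintrep2} gives $\eta'(x)=\frac{\la}{\ep}\int_x^\infty e^{(F(x)-F(u))/\ep}\eta(u)\,du=-\frac{\la}{\ep}\int_{-\infty}^x e^{(F(x)-F(u))/\ep}\eta(u)\,du$, and equating the two and cancelling $e^{F(x)/\ep}$ yields $\int_{\bR}e^{-F/\ep}\eta=0$. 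Since $\eta<0$ on $(-\infty,r_\ep)$ and $\eta>0$ on $(r_\ep,\infty)$, this forces
\[
Z:=\int_{-\infty}^{r_\ep}|\eta|\,e^{-F/\ep}\,dx=\int_{r_\ep}^{\infty}|\eta|\,e^{-F/\ep}\,dx.
\]
By Corollary \ref{C:nodeloc}, $\xi-\de<r_\ep<\de$ once $\ep$ is small, so splitting the middle integral at $r_\ep$ gives
\[
\int_{\xi-\de}^{\de}\eta\,e^{-F/\ep}\,dx=-\int_{\xi-\de}^{r_\ep}|\eta|\,e^{-F/\ep}\,dx+\int_{r_\ep}^{\de}|\eta|\,e^{-F/\ep}\,dx,
\]
while the right-hand side of the lemma equals $2Z-\int_{\xi-\de}^{r_\ep}|\eta|e^{-F/\ep}\,dx-\int_{r_\ep}^{\de}|\eta|e^{-F/\ep}\,dx$. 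Hence it is enough to show
\[
\int_{\xi-\de}^{r_\ep}|\eta|\,e^{-F/\ep}\,dx=o(Z)\qquad\text{and}\qquad\int_{r_\ep}^{\de}|\eta|\,e^{-F/\ep}\,dx=o(Z),
\]
for then the numerator above is $o(Z)$ and the denominator is $2Z-o(Z)\ge Z$. I would take $\de_0=(\xi-x_0)\wedge x_1$ so that Corollary \ref{C:nodeloc} and Lemma \ref{L:bulkint} apply, and record (from Lemma \ref{L:bulkint} with $g\equiv1,k=1$ on $(-\infty,\xi-\de)$ and on $(\de,\infty)$) that for small $\ep$ and some $c_0>0$ one has $Z\ge c_0\sqrt\ep\,|\eta(-\infty)|e^{-F(x_0)/\ep}$ and $Z\ge c_0\sqrt\ep\,\eta(\infty)e^{-F(x_1)/\ep}$.

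The first overshoot is straightforward. Monotonicity of $\eta$ gives $|\eta|\le|\eta(-\infty)|$ on $(-\infty,r_\ep)$, and on $[\xi-\de,r_\ep]$ one checks $F\ge m_\de:=F(\xi-\de)\wedge F(\de)$ by distinguishing $r_\ep<0$ from $r_\ep\ge0$ and using that $F$ increases on $(x_0,0)$ and decreases on $(0,x_1)$. Since $F(\xi-\de)>F(x_0)$ and $F(\de)>F(x_1)>F(x_0)$, we have $m_\de>F(x_0)$, whence $\int_{\xi-\de}^{r_\ep}|\eta|e^{-F/\ep}\,dx\le(2\de-\xi)|\eta(-\infty)|e^{-m_\de/\ep}=o(\sqrt\ep\,|\eta(-\infty)|e^{-F(x_0)/\ep})=o(Z)$.

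The second overshoot is the heart of the matter, and I would split it at the saddle: $\int_{r_\ep}^{\de}=\int_{r_\ep}^{0\vee r_\ep}+\int_{0\vee r_\ep}^{\de}$. On $[0\vee r_\ep,\de]\subseteq[0,\de]$ we have $F\ge F(\de)>F(x_1)$ and $0\le\eta\le\eta(\de)\le\eta(\infty)$, so $\int_{0\vee r_\ep}^{\de}|\eta|e^{-F/\ep}\,dx\le\de\,\eta(\infty)e^{-F(\de)/\ep}=o(\sqrt\ep\,\eta(\infty)e^{-F(x_1)/\ep})=o(Z)$. The remaining piece is nonempty only if $r_\ep<0$, where $(r_\ep,0)\subseteq(x_0,0)$ and $F$ is increasing; here I use the differentiated representation from the proof of Lemma \ref{L:eigenintrep}, $\ep\,\eta'(s)e^{-F(s)/\ep}=\la\int_s^\infty\eta(u)e^{-F(u)/\ep}\,du\le\la Z$ for $s\ge r_\ep$ (the tail integral is at most $\int_{r_\ep}^\infty\eta e^{-F/\ep}=Z$ because $\eta\ge0$ on $(r_\ep,\infty)$). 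Then, since $\eta(r_\ep)=0$ and $F(s)\le F(y)$ for $r_\ep\le s\le y\le0$,
\[
\eta(y)=\int_{r_\ep}^{y}\eta'(s)\,ds\le\frac{\la}{\ep}Z\int_{r_\ep}^{y}e^{F(s)/\ep}\,ds\le\frac{\la}{\ep}Z\,(y-r_\ep)e^{F(y)/\ep},
\]
so $\eta(y)e^{-F(y)/\ep}\le\frac{\la}{\ep}Z(y-r_\ep)$ and $\int_{r_\ep}^{0}\eta\,e^{-F/\ep}\,dy\le\frac{\la}{2\ep}Z\,r_\ep^2\le\frac{(\de-\xi)^2}{2}\frac{\la}{\ep}Z$. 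By \eqref{evasym}, $\la/\ep\to0$, so this is $o(Z)$; combining the two overshoot bounds and dividing by $2Z-o(Z)$ completes the argument. I expect the main obstacle to be exactly this last piece: near $r_\ep$ the weight $e^{-F/\ep}$ is not exponentially small next to the deep-well mass (it may happen that $F(r_\ep)<F(x_1)$), so the crude bound $|\eta|\le\eta(\de)$ is too lossy there, and one has to use that $\eta$ itself is small near its zero --- the derivative identity supplies exactly the gain factor $\la/\ep\to0$ needed.
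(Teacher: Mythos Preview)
Your argument is correct and takes a genuinely different route from the paper's. The paper attacks the hardest piece---the region $(r_\ep,0)$ where $e^{-F/\ep}$ is not small against the deep-well mass---by invoking the probabilistic comparison of $\eta$ with hitting probabilities (Theorem~\ref{T:dirshape}) and then the Freidlin--Wentzell exit estimate (Lemma~\ref{L:whichside}) to obtain the pointwise bound \eqref{HMScompare} on $|\eta|$. You sidestep all of that machinery with the purely analytic identity $\ep\,\eta'(s)e^{-F(s)/\ep}=\la\int_s^\infty\eta\,e^{-F/\ep}\le\la Z$, which after integrating from $r_\ep$ gives $\eta(y)e^{-F(y)/\ep}\le\frac{\la}{\ep}Z(y-r_\ep)$ on $(r_\ep,0)$; the needed smallness then falls out of $\la/\ep\to0$ from \eqref{evasym}. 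This is more elementary and arguably cleaner for the lemma as stated. Two remarks on what each approach buys: first, your decomposition actually also yields $\int_{\xi-\de}^{\de}|\eta|\,e^{-F/\ep}\,dx=o(Z)$ (since you bound each half in absolute value), which is the form the paper later reuses in Theorem~\ref{T:bulkintII} via \eqref{midint3}; second, the paper's pointwise estimate \eqref{HMScompare} is strictly stronger than your integrated bound and is what underlies the subsequent Remark that $\eta/\eta(\infty)\to1_{(0,\infty)}$ pointwise---your derivative trick does not directly give that.
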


\begin{proof}
    Without loss of generality, we may assume $F(x_0)=0$. Let
    $\ga=(F(0)-F(x_1))/4>0$.  By the continuity of $F$, we may choose $\de_0>0$
    such that $F(-\de_0)>F(x_1)$ and
    \begin{equation}\label{starrev1}
        F(-\de/2) - F(x_0 - \de) - F(x_1) > 2\ga,
    \end{equation}
    for all $\de\in(0,\de_0)$.

    Let $\de\in(0,\de_0)$ be arbitrary. By Theorem \ref{T:dirshape} applied to
    $x\mapsto F(-x)$, there exists $\de'>0$ and $0<\ep_0<x_1$ such that
    \[
        |\eta(x)| \le (1 + \de')|\eta(x_1 - \ep)|P^x(\tau^X_{x_1-\ep}
            < \tau^X_{r_\ep}),
    \]
    for all $x\in(\xi -\de,-\de)\cap(r_\ep,\infty)$ and all $\ep\in(0,\ep_0)$.
    For any such $x$ and $\ep$, since $X$ is continuous and
    \[
        x_0 - \de < r_\ep < x < -\de/2 < x_1 - \ep,
    \]
    it follows that on $\{\tau^X_{x_1-\ep}<\tau^X_{r_\ep}\}$, we have
    $\tau^X_{-\de/2}<\tau^X_{x_0-\de}$, $P^x$-a.s. Hence,
    \[
        |\eta(x)| \le (1 + \de')|\eta (x_1 - \ep)|
            P^x(\tau^X_{-\de/2}<\tau^X_{x_0-\de}).
    \]
    By making $\ep_0$ smaller, if necessary, and using Theorem \ref{T:stepest}
    applied to $x\mapsto F(-x)$, this gives
    \[
        |\eta(x)| \le (1 + \de')^2|\eta(\infty)|
            P^x(\tau^X_{-\de/2}<\tau^X_{x_0-\de}),
    \]
    for all $x\in(\xi -\de,-\de)\cap(r_\ep,\infty)$ and all $\ep\in(0,\ep_0)$.
    By \eqref{starrev1}, we may apply Lemma \ref{L:whichside}, so that by making
    $\ep_0$ smaller, if necessary, we obtain
    \begin{equation}\label{HMScompare}
        |\eta(x)| \le (1 + \de')^2|\eta(\infty)|\exp\left({
                -\frac1\ep(F(-\de/2) - F(x_0 - \de) - 2\ga),
            }\right)
    \end{equation}
    for all $x\in(\xi-\de,-\de)\cap(r_\ep,\infty)$ and all $\ep\in(0,\ep_0)$. By
    \eqref{starrev1}, for fixed $x\in(\xi -\de,-\de)\cap(r_\ep,\infty)$ and
    $\ep\in(0,\ep_0)$, we may write
    \[
        |\eta(x)| \le (1 + \de')^2|\eta(\infty)|e^{-F(x_1)/\ep}.
    \]
    For fixed $x\in(-\infty,r_\ep]$, by the monotonicity of $\eta$, we have
    $|\eta(x)|\le|\eta(-\infty)|$. Therefore, for all $x\in(\xi -\de,-\de)$ and
    all $\ep\in(0,\ep_0)$, we have
    \[
        |\eta(x)| \le (1 + \de')^2(|\eta(-\infty)|
            + |\eta(\infty)|e^{-F(x_1)/\ep}).
    \]
    By Proposition \ref{P:Lapmeth}, after making $\ep_0$ smaller, if necessary,
    we have
    \begin{align}
        \int_{\xi -\de}^{-\de} |\eta(x)|e^{-F(x)/\ep}\,dx
            &\le (1 + \de')^2(|\eta(-\infty)|
            + |\eta(\infty)|e^{-F(x_1)/\ep})
            \int_{\xi -\de}^{-\de} e^{-F(x)/\ep}\,dx\notag\\
        &\le (1 + \de')^3(|\eta(-\infty)|
            + |\eta(\infty)|e^{-F(x_1)/\ep})\frac\ep{F'(\xi -\de)}.
            \label{midint1}
    \end{align}
    Let $m=\min\{F'(\xi -\de),F'(-\de),|F'(\de)|\}$. Choose $c\in \{-\de,\de\}$
    such that $F(c)=F(-\de)\wedge F(\de)$. By Proposition \ref{P:Lapmeth}, by
    making $\ep_0$ smaller, if necessary, we also have
    \begin{align}
        \int_{-\de}^\de |\eta(x)|e^{-F(x)/\ep}\,dx
            &\le (|\eta(-\infty)|
            + |\eta(\infty)|)\left({
                \int_{-\de}^0 e^{-F(x)/\ep}\,dx + \int_0^{\de} e^{-F(x)/\ep}\,dx
            }\right)\notag\\
        &\le (1 + \de')(|\eta(-\infty)|
            + |\eta(\infty)|)\frac{2\ep}m e^{-F(c)/\ep}\notag\\
        &\le (1 + \de')(|\eta(-\infty)|
            + |\eta(\infty)|e^{-F(x_1)/\ep})\frac{2\ep}m\label{midint2}
    \end{align}
    Combining \eqref{midint1} and \eqref{midint2} gives
    \begin{equation}\label{midint3}
        \int_{\xi -\de}^\de |\eta(x)|e^{-F(x)/\ep}\,dx
            \le (1 + \de')^3(|\eta(-\infty)|
            + |\eta(\infty)|e^{-F(x_1)/\ep})\frac{3\ep}m.
    \end{equation}
    Using Lemma \ref{L:bulkint}, again making $\ep_0$ smaller, if necessary, we
    have 
    \begin{align*}
        \int_{\xi-\de}^{\de} &|\eta(x)|e^{-F(x)/\ep}\,dx\\
        &\le (1 + \de')^4\left({
                \sqrt{\frac{F''(x_0)}{2\pi\ep}}
                    \int_{-\infty}^{\xi-\de}|\eta(x)|e^{-F(x)/\ep}\,dx
            }\right.\\
        &\qquad\qquad\qquad\qquad \left.{
            {} + \sqrt{\frac{F''(x_1)}{2\pi\ep}}
                \int_\de^\infty|\eta(x)|e^{-F(x)/\ep}\,dx
            }\right)
            \frac{3\ep}{m}\\
        &\le \frac{3\ep^{1/2}(1+\de')^4\sqrt{F''(x_0)\vee F''(x_1)}}{m}
            \left({\int_{-\infty}^{\xi-\de} |\eta(x)|e^{-F(x)/\ep}\,dx
            + \int_\de^\infty |\eta(x)|e^{-F(x)/\ep}\,dx}\right),
    \end{align*}
    which completes the proof.
\end{proof}

\begin{rmk}
    Although we have narrowed down the location of the nodal point, $r_\ep$, to
    the interval $(\xi-\de,\de)$, the work in \cite{Huisinga2004} suggests that
    the nodal point actually converges to $\xi$. Moreover, the caption to
    \cite[Fig.~3]{Huisinga2004}, states that a step function with discontinuity
    at $\xi$ is a candidate limit for $\eta$ as $\ep\to0$. However,
    \eqref{HMScompare} shows that $\eta(x)=o(\eta(\infty))$ for all $x<0$. In
    fact, together with Theorem \ref{T:stepest} applied to $x\mapsto F(-x)$, it
    follows that $\eta/\eta(\infty)\to1_{(0,\infty)}$, pointwise on
    $\bR\setminus\{0\}$.
\end{rmk}

\begin{prop}\label{P:relinf}
    We have
    \[
        \frac{\eta(\infty)}{|\eta(-\infty)|}
            \sim \sqrt{\frac{F''(x_1)}{F''(x_0)}}\,e^{(F(x_1)-F(x_0))/\ep},
    \]
    as $\ep\to 0$.
\end{prop}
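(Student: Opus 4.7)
The plan is to exploit the orthogonality of $\eta=\eta_1$ against the constant function $\eta_0\equiv 1$ in $L^2(\vpi)$. Since the generator $A=\ep\wt H$ is self-adjoint on $L^2(\vpi)$ and $\eta_0,\eta_1$ correspond to distinct eigenvalues $\la_0=0$ and $\la_1=\la>0$, we have $\int\eta\,d\vpi=0$, which (dropping the irrelevant normalization) gives
\[
    \int_{-\infty}^\infty \eta(x)\,e^{-F(x)/\ep}\,dx = 0.
\]

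Next I would fix a small $\de>0$ satisfying the hypotheses of both Lemma \ref{L:bulkint} and Lemma \ref{L:midint} (and with $\xi-\de\in(x_0,0)$), and split this integral into the three pieces $(-\infty,\xi-\de)$, $(\xi-\de,\de)$, $(\de,\infty)$. Corollary \ref{C:nodeloc} guarantees that for all sufficiently small $\ep$ the nodal point $r_\ep$ lies in $(\xi-\de,\de)$, so $\eta<0$ on $(-\infty,\xi-\de)$ and $\eta>0$ on $(\de,\infty)$. Applying Lemma \ref{L:bulkint} with $g\equiv1$ and $k=1$ then yields
\[
    \int_{-\infty}^{\xi-\de}\eta(x)\,e^{-F(x)/\ep}\,dx
        \sim -|\eta(-\infty)|\sqrt{\frac{2\pi\ep}{F''(x_0)}}\,e^{-F(x_0)/\ep},
\]
and
\[
    \int_\de^\infty\eta(x)\,e^{-F(x)/\ep}\,dx
        \sim \eta(\infty)\sqrt{\frac{2\pi\ep}{F''(x_1)}}\,e^{-F(x_1)/\ep}.
\]
Lemma \ref{L:midint} tells me that the middle piece is $o$ of the sum of the absolute values of these two outer pieces.

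Combining these and rearranging, I would set
\[
    R_\ep := \frac{\eta(\infty)}{|\eta(-\infty)|}\,\sqrt{\frac{F''(x_0)}{F''(x_1)}}\,e^{-(F(x_1)-F(x_0))/\ep},
\]
so that the orthogonality identity becomes $R_\ep-1=o(1)+o(R_\ep)$, forcing $R_\ep\to 1$. This is exactly the claim.

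The only non-routine point is verifying the orthogonality $\int\eta\,d\vpi=0$, which rests on identifying $\la_0=0$, $\eta_0\equiv 1$ and on the self-adjointness of $A$ with respect to $\vpi$; all of this is standard for the reversible diffusion \eqref{SDE} and was established in Part~I. Once the orthogonality is in hand, the rest is just bookkeeping with Lemmas \ref{L:bulkint} and \ref{L:midint}. There is no real obstacle; the proof is a one-line consequence of the structural estimates already proved in this section.
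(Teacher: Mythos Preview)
Your proposal is correct and follows essentially the same route as the paper: split $\int\eta\,e^{-F/\ep}\,dx=0$ at $\xi-\de$ and $\de$, use Lemma~\ref{L:bulkint} with $g\equiv1$, $k=1$ for the two outer pieces and Lemma~\ref{L:midint} to control the middle, then read off $|\ka_{1,\ep}|\sim|\ka_{2,\ep}|$. The only cosmetic difference is that the paper first proves $|\ka_{1,\ep}|\sim|\ka_{2,\ep}|$ and then invokes Lemma~\ref{L:bulkint}, whereas you substitute the Laplace asymptotics first and then solve for $R_\ep$; the content is identical.
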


\begin{proof}
    Choose $\de$ such that Lemma \ref{L:bulkint} and Lemma \ref{L:midint} hold.
    Let
    \begin{align*}
        \ka_{1,\ep} &= \int_{-\infty}^{\xi-\de} \eta(x)e^{-F(x)/\ep}\,dx
            = -\int_{-\infty}^{\xi-\de} |\eta (x)|e^{-F(x)/\ep}\,dx,\\
        \ka_{2,\ep} &= \int_\de^\infty \eta(x)e^{-F(x)/\ep}\,dx
            =\int_\de^\infty |\eta(x)|e^{-F(x)/\ep}\,dx,\\
        \ka_{3,\ep} &= \int_{\xi-\de}^\de \eta(x)e^{-F(x)/\ep}\,dx.
    \end{align*}
    Since $\int_\bR \eta(x)e^{-F(x)/\ep}\,dx=0$, we have that
    $|\ka_{1,\ep}|=|\ka_{2,\ep}|+\ka_{3,\ep}$. By Lemma \ref{L:midint}, we also
    have that $\ka_{3,\ep}=o(|\ka_{1,\ep}|+|\ka_{2,\ep}|)$.

    Since $\ka_{3,\ep}=o(|\ka_{1,\ep}|+|\ka_{2,\ep}|)$, there exists $\ep_0>0$
    such that $|\ka_{1,\ep}|+|\ka_{2,\ep}|>0$ and
    \[
        \frac{|\ka_{3,\ep}|}{|\ka_{1,\ep}| + |\ka_{2,\ep}|} < 1,
    \]
    for all $\ep\in(0,\ep_0)$. Hence, for any such $\ep$, we may write
    \[
        \frac{2\left({
                \frac {\ka_{3,\ep}}{|\ka_{1,\ep}| + |\ka_{2,\ep}|}
            }\right)}
            {1 - \left({
                \frac{\ka_{3,\ep}}{|\ka_{1,\ep}| + |\ka_{2,\ep}|}
            }\right)}
            = \frac{2\ka_{3,\ep}}{|\ka_{1,\ep}| + |\ka_{2,\ep}| - \ka_{3,\ep}}
            = \frac{\ka_{3,\ep}}{|\ka_{2,\ep}|},
    \]
    which implies $|\ka_{2,\ep}|>0$ for all such $\ep$, and also shows that
    $\ka_{3, \ep}/|\ka_{2,\ep}| \to 0$ as $\ep\to0$. Therefore,
    $|\ka_{1,\ep}|/|\ka_{2,\ep}| = 1 + \ka_{3,\ep}/|\ka_{2,\ep}| \to 1$ as
    $\ep\to0$. That is, $|\ka_{1,\ep}| \sim|\ka_{2,\ep}|$. Applying Lemma
    \ref{L:bulkint} finishes the proof.
\end{proof}

In the following theorem, we improve the results of Lemma \ref{L:bulkint} in the
case $k=1$, to extend the intervals of integration to include the entire domains
of attraction.

\begin{thm}\label{T:bulkintII}
    If $g\in L^\infty(\bR)$ is continuous at $x_0$ and $x_1$, then
    \begin{equation}\label{bulkintII1}
        \int_{-\infty}^0 g(x)\eta(x)e^{-F(x)/\ep}\,dx
            \sim g(x_0)\eta(-\infty)\sqrt{
                    \frac{2\pi\ep}{F''(x_0)}
                }\,e^{-F(x_0)/\ep},
    \end{equation}
    and
    \begin{equation}\label{bulkintII2}
        \int_0^\infty g(x)\eta(x)e^{-F(x)/\ep}\,dx
            \sim g(x_1)\eta(\infty)\sqrt{
                    \frac{2\pi\ep}{F''(x_1)}
                }\,e^{-F(x_1)/\ep},
    \end{equation}
    as $\ep\to0$, provided the integrals exist for sufficiently small $\ep$.
    Consequently,
    \begin{equation}\label{wt-eta-int}
        \int g\eta\,d\vpi \sim (g(x_0) - g(x_1))\eta(-\infty),
    \end{equation}
    as $\ep\to0$.
\end{thm}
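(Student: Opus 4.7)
The plan is to prove \eqref{bulkintII1} by splitting the integral at $\xi-\de$:
\[
    \int_{-\infty}^0 g\eta e^{-F/\ep}\,dx
        = \int_{-\infty}^{\xi-\de} g\eta e^{-F/\ep}\,dx
            + \int_{\xi-\de}^0 g\eta e^{-F/\ep}\,dx,
\]
choosing $\de>0$ small enough that both Lemma \ref{L:bulkint} and Lemma \ref{L:midint} apply. By Corollary \ref{C:nodeloc}, $r_\ep\in(\xi-\de,\de)$ for sufficiently small $\ep$, so $\eta(x)=-|\eta(x)|$ on $(-\infty,\xi-\de]$. Writing $g=g^{+}-g^{-}$ and applying Lemma \ref{L:bulkint} with $k=1$ to each nonnegative part yields
\[
    \int_{-\infty}^{\xi-\de} g\eta e^{-F/\ep}\,dx
        \sim g(x_0)\eta(-\infty)\sqrt{\frac{2\pi\ep}{F''(x_0)}}\,e^{-F(x_0)/\ep},
\]
which already has the target form. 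It thus remains to show that the middle piece $\int_{\xi-\de}^0 g\eta e^{-F/\ep}\,dx$ is of strictly lower order than the bulk piece.

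Bounding the middle integral by $\|g\|_\infty\int_{\xi-\de}^\de |\eta|e^{-F/\ep}\,dx$, Lemma \ref{L:midint} gives that this is $o(A_\ep+B_\ep)$, where $A_\ep=\int_{-\infty}^{\xi-\de}|\eta|e^{-F/\ep}\,dx$ and $B_\ep=\int_\de^\infty|\eta|e^{-F/\ep}\,dx$. The crucial observation is that $A_\ep\sim B_\ep$: by Lemma \ref{L:bulkint}, the ratio $B_\ep/A_\ep$ is asymptotic to $(\eta(\infty)/|\eta(-\infty)|)\sqrt{F''(x_0)/F''(x_1)}\,e^{(F(x_0)-F(x_1))/\ep}$, which tends to $1$ by Proposition \ref{P:relinf}. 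Hence the middle piece is $o(A_\ep)$, matching the order of the bulk piece, and \eqref{bulkintII1} follows. The proof of \eqref{bulkintII2} is identical after splitting $(0,\infty)=(0,\de)\cup(\de,\infty)$ and using $\eta>0$ on $(\de,\infty)$.

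For the consequence \eqref{wt-eta-int}, recall that $\vpi(dx)=Z_\ep^{-1}e^{-F(x)/\ep}\,dx$ with $Z_\ep=\int_\bR e^{-F/\ep}\,dx$. By Laplace's method (Theorem \ref{T:Lapmeth}) and the assumption $F(x_0)<F(x_1)$, $Z_\ep\sim\sqrt{2\pi\ep/F''(x_0)}\,e^{-F(x_0)/\ep}$. Dividing \eqref{bulkintII1} by $Z_\ep$ directly contributes $g(x_0)\eta(-\infty)$. For \eqref{bulkintII2}, Proposition \ref{P:relinf} gives
\[
    \eta(\infty)\sqrt{\frac{2\pi\ep}{F''(x_1)}}\,e^{-F(x_1)/\ep}
        \sim |\eta(-\infty)|\sqrt{\frac{2\pi\ep}{F''(x_0)}}\,e^{-F(x_0)/\ep}
        \sim -\eta(-\infty)\,Z_\ep,
\]
so dividing by $Z_\ep$ contributes $-g(x_1)\eta(-\infty)$. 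Summing yields \eqref{wt-eta-int}.

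The main delicacy is in the second paragraph: Lemma \ref{L:midint} controls the middle piece only by $A_\ep+B_\ep$, and a priori one might fear $B_\ep\gg A_\ep$ because of the exponential factor $e^{(F(x_0)-F(x_1))/\ep}$. It is only after invoking Proposition \ref{P:relinf} -- which encodes the orthogonality $\int\eta\,d\vpi=0$ through the precise relative magnitudes of $\eta(\pm\infty)$ -- that the cancellation $A_\ep\sim B_\ep$ emerges and renders the middle piece negligible against the bulk piece alone.
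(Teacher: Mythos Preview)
Your proof is correct and follows essentially the same route as the paper: split at $\xi-\de$ (and $\de$), handle the bulk pieces with Lemma \ref{L:bulkint}, show the middle piece is negligible via Lemma \ref{L:midint} together with Proposition \ref{P:relinf}, and deduce \eqref{wt-eta-int} from Theorem \ref{T:Lapmeth} and Proposition \ref{P:relinf}. The only technical difference is that the paper reaches back into the proof of Lemma \ref{L:midint} for the explicit bound \eqref{midint3} and then applies Proposition \ref{P:relinf} to it, whereas you use only the stated conclusion of Lemma \ref{L:midint} and then invoke $A_\ep\sim B_\ep$ (via Lemma \ref{L:bulkint} and Proposition \ref{P:relinf}) to reduce $o(A_\ep+B_\ep)$ to $o(A_\ep)$; your version is slightly more self-contained.
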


\begin{proof}
    Without loss of generality, we may assume $F(x_0)=0$. Choose $\de$ so that
    Lemma \ref{L:midint} applies. By \eqref{midint3} and Proposition
    \ref{P:relinf},
    \[
        \left|{\int_{\xi-\de}^0 g(x)\eta(x)e^{-F(x)/\ep}\,dx}\right|
            \le \|g\|_\infty(1 + \de')^4\left({
                1 + \sqrt{\frac{F''(x_1)}{F''(x_0)}}
            }\right)|\eta(-\infty)|\frac{6\ep}m,
    \]
    for $\ep$ sufficiently small, where
    $m=\min\{F'(\xi-\de),F'(-\de),|F'(\de)|\}$. Thus, to prove
    \eqref{bulkintII1}, it suffices to show that
    \[
        \int_{-\infty}^{\xi-\de} g(x)\eta(x)e^{-F(x)/\ep}\,dx
            \sim g(x_0)\eta(-\infty)\sqrt{\frac{2\pi\ep}{F''(x_0)}}.
    \]
    But this follows from \eqref{bulkint1} with $k=1$ and the fact that $\eta<0$
    on $(-\infty,\xi-\de)$.

    Using Proposition \ref{P:relinf}, to prove \eqref{bulkintII2}, it suffices
    to show that
    \[
        \int_0^\infty g(x)\eta(x)e^{-F(x)/\ep}\,dx
            \sim -g(x_1)\eta(-\infty)\sqrt{\frac{2\pi\ep}{F''(x_0)}}.
    \]
    As above, by \eqref{midint3} and Proposition \ref{P:relinf}, it suffices to
    show that
    \[
        \int_\de^\infty g(x)\eta(x)e^{-F(x)/\ep}\,dx
            \sim -g(x_1)\eta(-\infty)\sqrt{\frac{2\pi\ep}{F''(x_0)}}.
    \]
    But this follows from \eqref{bulkint2}, Proposition \ref{P:relinf}, and the
    fact that $\eta >0$ on $(\de,\infty)$. Finally, combining these results with
    Proposition \ref{P:relinf} and Theorem \ref{T:Lapmeth}, we obtain
    \[
        \eta(-\infty)^{-1}\int g\eta\,d\vpi \to g(x_0) - g(x_1),
    \]
    as $\ep\to0$.
\end{proof}

\section{Asymptotic behavior of the coupled process}\label{S:asympcoup}

Recall that we are assuming $F$ is a double-well potential in one dimension,
with $x_0<0<x_1$ and $F(x_0)<F(x_1)$. Here, the $x_j$'s are the local minima and
$0$ is the local maximum.  

Our construction of the coupling is dependent on our choice of $Q\in\bR^{2\times
2}$ and $\xi =\xi^{(1)}$ in the coupling construction outlined in the
introduction (see \cite[Theorem 3.8]{Kurtz2020} for more details). We begin with
a lemma that characterizes all the admissible choices for $Q$ and $\xi$.

\begin{lemma}\label{L:matcond}
    Let $\xi_0,\xi_1\in\bR$. Define $a_j=\la\xi_j/(\xi_j-\xi_{1-j})$. Then
    \[
        Q=\begin{pmatrix}
            -a_0 & a_0\\
            a_1 & -a_1
        \end{pmatrix}
    \]
    is the generator of a continuous-time Markov chain with state space
    $E_0=\{0,1\}$, eigenvalues $\{0,-\la\}$, and corresponding eigenvectors
    $(1,1)^T$ and $\xi=(\xi_1,\xi_2)^T$ satisfying $\al_j=1+\xi_j\eta>0$ if and
    only if the following conditions hold:   
    \begin{enumerate}[(i)]
        \item   $\ds{-\frac1{\eta(\infty)}\le\xi_j\le\frac1{|\eta(-\infty)|}}$,
                for $j=0,1$, and
        \item   $\xi_0\xi_1<0$.
    \end{enumerate}
\end{lemma}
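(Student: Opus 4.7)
The plan is a direct verification that (i) and (ii) translate, respectively, into the positivity of $\al_0,\al_1$ and into $Q$ being the generator of a non-degenerate CTMC with the claimed spectrum. I would separate the verification into three pieces.

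\textbf{Eigenstructure of $Q$.} By the definition of $a_j$, we have $a_j(\xi_j-\xi_{1-j})=\la\xi_j$, so
\[
    Q\xi = \begin{pmatrix} a_0(\xi_1-\xi_0)\\ a_1(\xi_0-\xi_1)\end{pmatrix}
        = \begin{pmatrix} -\la\xi_0\\ -\la\xi_1\end{pmatrix} = -\la\xi,
\]
while $Q(1,1)^T=0$ since each row of $Q$ sums to zero. Moreover,
\[
    a_0+a_1 = \frac{\la\xi_0}{\xi_0-\xi_1}+\frac{\la\xi_1}{\xi_1-\xi_0} = \la,
\]
so the trace of $Q$ equals $-\la$, which (together with the eigenvalue $0$) forces the remaining eigenvalue to be $-\la$. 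These two eigenvectors are linearly independent precisely when $\xi_0\ne\xi_1$, a condition that is implied by (ii).

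\textbf{Generator condition.} For $Q$ to generate a CTMC on $\{0,1\}$, one needs $a_0,a_1\ge 0$. Since $\la>0$ by \eqref{evasym}, this is equivalent to $\xi_j$ and $\xi_j-\xi_{1-j}$ sharing a sign for each $j$. A short case analysis on the signs of $\xi_0$ and $\xi_1$ shows that $a_0,a_1>0$ if and only if $\xi_0$ and $\xi_1$ are nonzero and of opposite signs, i.e.\ condition (ii). The boundary cases $\xi_j=0$ make state $j$ absorbing, i.e.\ yield a reducible chain, and these are precisely the cases excluded by the strict inequality in (ii); excluding them is natural here since \eqref{Bgen} would then define a degenerate coupling.

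\textbf{Positivity of $\al_j$.} As noted after Lemma \ref{L:eigenintrep}, $\eta$ is continuous, strictly increasing and bounded, with $\eta(-\infty)<0<\eta(\infty)$, and neither limit is attained. Hence the range of $\eta$ is the open interval $(\eta(-\infty),\eta(\infty))$, and $\al_j(x)=1+\xi_j\eta(x)>0$ for all $x\in\bR$ if and only if $1+\xi_j\eta(-\infty)\ge 0$ and $1+\xi_j\eta(\infty)\ge 0$. For $\xi_j>0$ only the first is binding, giving $\xi_j\le 1/|\eta(-\infty)|$; for $\xi_j<0$ only the second is binding, giving $\xi_j\ge -1/\eta(\infty)$; for $\xi_j=0$ both are vacuous. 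Combining the three cases yields exactly
\[
    -\frac{1}{\eta(\infty)}\le\xi_j\le\frac{1}{|\eta(-\infty)|},
\]
which is (i).

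Concatenating these three observations gives both directions of the stated equivalence. There is no genuine obstacle: the lemma is a direct algebraic characterization. The only subtle point is the role of the strict inequality in (ii), which ensures that both transition rates of the chain are strictly positive (hence that the coupling is non-trivial on $E_0$); accordingly, care is needed in the sign analysis to see that $\xi_0\xi_1<0$ is exactly the condition for $a_0,a_1>0$, whereas the weaker $\xi_0\xi_1\le 0$ with $(\xi_0,\xi_1)\ne(0,0)$ would correspond only to $a_0,a_1\ge 0$.
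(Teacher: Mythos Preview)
Your proposal is correct and follows the same approach as the paper's proof, which simply states that the $a_j$ are defined so that $Q$ has the required eigenstructure, that $\al_j>0$ is equivalent to (i), and that $a_j>0$ is equivalent to (ii). You have merely expanded each of these three observations into an explicit verification.
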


\begin{proof}
    Note that the $a_j$ are defined precisely so that $Q$ has the given
    eigenvalues and eigenvectors. Also, $\al_j=1+\xi_j\eta >0$ if and only if
    (i). And the $a_j$ are both positive if and only if (ii).
\end{proof}

For any such choice of $\xi$ as in Lemma \ref{L:matcond}, we obtain a
coupled process $(X,Y)$ with generator $B$ given by \eqref{Bgen} and initial
distribution $\nu$ given by \eqref{nudef}. This process is cadlag, $X$ satisfies
the SDE given by \eqref{SDE}, $Y$ is a continuous-time Markov chain with
generator $Q$, and, by \eqref{mainRd},
\begin{equation}\label{mainspec}
    P(X(t) \in \Ga \mid Y(t) = j) = \int_\Ga \al_j(x)\,\vpi(dx)
        = \vpi(\Ga) + \xi_j\int_\Ga \eta(x)\,\vpi(dx),
\end{equation}
for $j=0,1$ and all Borel sets $\Ga\subset\bR$. Recall that
$\vpi=\mu(\bR)^{-1}\mu$ and $\mu(dx)=e^{-F(x)/\ep}\,dx$.

For each fixed $\ep>0$, we may choose a different $\xi$. Hence, all of these
objects, in fact, depend on $\ep$. We will, however, suppress that dependence in
the notation.

\begin{thm}\label{T:halfmain}
    The following are equivalent to \eqref{trackdeep}:
    \begin{enumerate}[(a)]
        \item   $\xi_0=o(|\eta(-\infty)|^{-1})$ as $\ep\to0$.
        \item   $E[g(X(t)) \mid Y(t) = 0] - E^\vpi[g(X(0)) \mid X(0) < 0] \to 0$
                as $\ep\to0$, for each $t\ge0$ and each bounded, measurable
                $g:\bR\to\bR$ that is continuous at $x_0$ and $x_1$.
    \end{enumerate}
\end{thm}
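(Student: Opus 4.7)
The plan rests on the explicit conditional distribution formula \eqref{mainspec} and the moment asymptotics in Theorem \ref{T:bulkintII}, together with the standard Laplace asymptotics for $\vpi$. Note first that, because of \eqref{mainspec}, the conditional quantities $P(X_\ep(t)<0\mid Y_\ep(t)=0)$ and $E[g(X_\ep(t))\mid Y_\ep(t)=0]$ do not actually depend on $t$, so time plays no real role in what follows.

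For the equivalence of (a) with \eqref{trackdeep}, take $\Ga=(-\infty,0)$ and $j=0$ in \eqref{mainspec} to obtain
\[
    P(X(t)<0\mid Y(t)=0) = \vpi((-\infty,0)) + \xi_0\int_{-\infty}^0 \eta\,d\vpi.
\]
Since $F(x_0)<F(x_1)$, Laplace asymptotics (Theorem \ref{T:Lapmeth}) give $\vpi((-\infty,0))\to1$ and $\vpi([0,\infty))\to0$, and Theorem \ref{T:bulkintII} applied to $g=1_{(-\infty,0)}$ gives $\int_{-\infty}^0\eta\,d\vpi = \eta(-\infty)(1+o(1))$. Since $\eta(-\infty)<0$, condition \eqref{trackdeep} is therefore equivalent to $\xi_0\eta(-\infty)\to0$, which is precisely (a).

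For the equivalence of (b) with (a), I would use \eqref{mainspec} to expand
\[
    E[g(X(t))\mid Y(t)=0] - E^\vpi[g(X(0))\mid X(0)<0] = R_\ep(g) + \xi_0\int g\eta\,d\vpi,
\]
where $R_\ep(g) = \int g\,d\vpi - \vpi((-\infty,0))^{-1}\int_{-\infty}^0 g\,d\vpi$. A short algebraic manipulation shows $|R_\ep(g)| \le 2\|g\|_\infty\vpi([0,\infty))/\vpi((-\infty,0))$, which vanishes as $\ep\to 0$ by the Laplace estimate, independently of the coupling parameters. By Theorem \ref{T:bulkintII} (equation \eqref{wt-eta-int}), $\xi_0\int g\eta\,d\vpi = \xi_0\eta(-\infty)[g(x_0)-g(x_1)+o(1)]$; this vanishes whenever (a) holds, proving (a)$\Rightarrow$(b). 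For the converse, apply (b) to the admissible choice $g=1_{(-\infty,0)}$, for which $g(x_0)-g(x_1)=1$, forcing $\xi_0\eta(-\infty)\to0$ and hence (a).

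I do not anticipate any serious obstacle: once \eqref{mainspec} and Theorem \ref{T:bulkintII} are available, the argument is a pair of routine bookkeeping computations. The only step meriting any care is the uniform bound on the deterministic remainder $R_\ep(g)$, but its decay is immediate from the exponential concentration of $\vpi$ at $x_0$.
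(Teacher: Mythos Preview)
Your proposal is correct and follows essentially the same approach as the paper: both use \eqref{mainspec} to reduce the conditional quantities to $\vpi$-integrals, then invoke the Laplace asymptotics for $\vpi((-\infty,0))$ and Theorem~\ref{T:bulkintII} (in particular \eqref{wt-eta-int}) to identify the $\xi_0\int g\eta\,d\vpi$ term as $\xi_0\eta(-\infty)(g(x_0)-g(x_1)+o(1))$. The only difference is organizational---you prove (a)$\Leftrightarrow$\eqref{trackdeep} first and then (a)$\Leftrightarrow$(b), whereas the paper proves (a)$\Leftrightarrow$(b) first and then closes the loop via \eqref{trackdeep}$\Rightarrow$(a)---but the computations are the same.
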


\begin{proof}
    Note that
    \begin{multline*}
        E[g(X(t)) \mid Y(t) = 0] - E^\vpi[g(X(0)) \mid X(0) < 0]\\
        = \int g(x)(1 + \xi_0\eta(x))\,\vpi(dx)
            - \vpi((-\infty,0))^{-1}\int_{-\infty}^0 g(x)\,\vpi(dx).
    \end{multline*}
    Since $\vpi((-\infty,0))^{-1}\to 1$ and $\left|{\int_0^\infty
    g\,d\vpi}\right|\le\|g\|_\infty\vpi((0,\infty))\to 0$, in order to prove
    that (a) and (b) are equivalent, it suffices to show that
    $\xi_0=o(|\eta(-\infty)|^{-1})$ if and only if $\xi_0\int g\eta\,d\vpi\to 0$
    for all $g$ satisfying the hypotheses. But this follows from 
    \eqref{wt-eta-int}.

    That (b) implies \eqref{trackdeep} is trivial. Assume \eqref{trackdeep}.
    Since
    \[
        P(X(t) < 0 \mid Y(t) = 0)
            = \vpi((-\infty,0)) + \xi_0\int_{-\infty}^0 \eta(x)\,\vpi(dx),
    \]
    and $\vpi((-\infty,0))\to1$, it follow that
    $\xi_0\int_{-\infty}^0\eta(x)\,\vpi(dx) \to 0$. By \eqref{wt-eta-int} with
    $g=1_{(-\infty,0)}$, we have $\int_{-\infty}^0\eta(x)\,\vpi(dx) \sim
    \eta(-\infty)$, and (a) follows.
\end{proof}

\begin{thm}\label{T:halfmain2}
    The following are equivalent to \eqref{trackshallow}:
    \begin{enumerate}[(a)]
        \item   $\xi_1\sim|\eta(-\infty)|^{-1}$.
        \item   $E[g(X(t)) \mid Y(t) = 1] - E^\vpi[g(X(0)) \mid X(0) > 0] \to 0$
                as $\ep\to0$, for each $t\ge0$ and each bounded, measurable
                $g:\bR\to\bR$ that is continuous at $x_0$ and $x_1$.
    \end{enumerate}
    Moreover, \eqref{trackshallow} implies \eqref{trackdeep}.
\end{thm}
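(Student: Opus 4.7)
The plan is to mirror the proof of Theorem \ref{T:halfmain}, adapted to the fact that the stationary measure $\vpi$ now concentrates on the half-line opposite to the minimum $x_1$ of interest. Using \eqref{mainspec} with $j=1$, I would write
\[
E[g(X(t))\mid Y(t)=1]-E^{\vpi}[g(X(0))\mid X(0)>0]
= \int g\,d\vpi + \xi_1\int g\eta\,d\vpi - \frac{1}{\vpi((0,\infty))}\int_0^{\infty}g\,d\vpi.
\]
By Theorem \ref{T:Lapmeth}, the first term converges to $g(x_0)$ and the third to $g(x_1)$, while \eqref{wt-eta-int} together with $\eta(-\infty)<0$ gives $\xi_1\int g\eta\,d\vpi \sim (g(x_1)-g(x_0))\,\xi_1|\eta(-\infty)|$. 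Hence the whole difference tends to $(g(x_0)-g(x_1))(1-\xi_1|\eta(-\infty)|)$. Choosing, for the nontrivial direction, some admissible $g$ with $g(x_0)\ne g(x_1)$ shows that this vanishes for every such $g$ iff $\xi_1|\eta(-\infty)|\to 1$, which establishes (a) $\Leftrightarrow$ (b).

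Next, since $g=1_{(0,\infty)}$ is bounded and continuous at $x_0$ and $x_1$, the implication (b) $\Rightarrow$ \eqref{trackshallow} is immediate. For the converse, \eqref{mainspec} with $\Ga=(0,\infty)$ gives
\[
P(X(t)>0\mid Y(t)=1)=\vpi((0,\infty))+\xi_1\int_0^{\infty}\eta\,d\vpi.
\]
Since $\vpi((0,\infty))\to 0$ and $\int_0^{\infty}\eta\,d\vpi\sim|\eta(-\infty)|$ by \eqref{wt-eta-int} applied to $g=1_{(0,\infty)}$, condition \eqref{trackshallow} forces $\xi_1|\eta(-\infty)|\to 1$, which is (a).

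For the final implication \eqref{trackshallow}$\Rightarrow$\eqref{trackdeep}, I would argue as follows. Once (a) holds, $\xi_1>0$ for all sufficiently small $\ep$, so Lemma \ref{L:matcond}(ii) forces $\xi_0<0$, and Lemma \ref{L:matcond}(i) yields $|\xi_0|\le 1/\eta(\infty)$. Proposition \ref{P:relinf} shows that $|\eta(-\infty)|/\eta(\infty)\to 0$, whence $|\xi_0||\eta(-\infty)|\to 0$, i.e., $\xi_0=o(|\eta(-\infty)|^{-1})$. By Theorem \ref{T:halfmain}, this is equivalent to \eqref{trackdeep}. The one genuinely new ingredient, compared with Theorem \ref{T:halfmain}, is that $\vpi((0,\infty))\to 0$, so the identification $\vpi((0,\infty))^{-1}\int_0^{\infty}g\,d\vpi\to g(x_1)$ requires Laplace's method rather than being automatic; and the exponential gap between $|\eta(-\infty)|$ and $\eta(\infty)$ supplied by Proposition \ref{P:relinf} is what squeezes the admissible range of $\xi_0$ enough to force \eqref{trackdeep}.
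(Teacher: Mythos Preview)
Your proposal is correct and follows essentially the same route as the paper's proof: the same decomposition via \eqref{mainspec}, the same appeal to Laplace's method for $\int g\,d\vpi\to g(x_0)$ and $\vpi((0,\infty))^{-1}\int_0^\infty g\,d\vpi\to g(x_1)$, the same use of \eqref{wt-eta-int} for the $\xi_1\int g\eta\,d\vpi$ term, and the same argument for \eqref{trackshallow}$\Rightarrow$\eqref{trackdeep} via Lemma~\ref{L:matcond} and Proposition~\ref{P:relinf}. One small wording issue: ``the whole difference tends to $(g(x_0)-g(x_1))(1-\xi_1|\eta(-\infty)|)$'' is imprecise since the right-hand side still depends on $\ep$; what you mean (and what your argument actually shows, using $|\xi_1\eta(-\infty)|\le 1$ from Lemma~\ref{L:matcond}(i)) is that the difference equals $(g(x_0)-g(x_1))(1-\xi_1|\eta(-\infty)|)+o(1)$, from which the equivalence follows.
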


\begin{proof}
    Note that
    \begin{multline*}
        E[g(X(t)) \mid Y(t) = 1] - E^\vpi[g(X(0)) \mid X(0) > 0]\\
        = \int g(x)(1 + \xi_1\eta(x))\,\vpi(dx)
            - \vpi((0,\infty))^{-1}\int_0^\infty g(x)\,\vpi(dx)\\
        = \int g\,d\vpi
            - \vpi((0,\infty))^{-1}\int_{(0,\infty)}g\,d\vpi
            + \xi_1\int g\eta\,d\vpi
    \end{multline*}
    To prove that (a) and (b) are equivalent, by \eqref{Lapmeth}, it suffices to
    show that $\xi_1\sim|\eta(-\infty)|^{-1}$ if and only if $\xi_1\int
    g\eta\,d\vpi\to -(g(x_0)-g(x_1))$ for all $g$ satisfying the hypotheses. But
    this follows from \eqref{wt-eta-int}.  

    That (b) implies \eqref{trackshallow} is trivial. Assume
    \eqref{trackshallow}. Since
    \[
        P(X(t) > 0 \mid Y(t) = 1)
            = \vpi((0,\infty)) + \xi_1\int_0^\infty \eta(x)\,\vpi(dx),
    \]
    and $\vpi((0,\infty))\to 0$, it follows that $\xi_1\int_0^\infty
    \eta(x)\,\vpi(dx)\to 1$. By \eqref{wt-eta-int} with $g=1_{(0,\infty)}$, we
    have $\int_0^\infty\eta(x)\,\vpi(dx)\sim|\eta(-\infty)|$, and (a) follows.

    Finally, assume \eqref{trackshallow}. Then (a) holds. By Lemma
    \ref{L:matcond}, we have $-\eta(\infty)^{-1}\le\xi_0<0$ for sufficiently
    small $\ep$. In particular, $|\xi_0|\le\eta(\infty)^{-1}$, so Theorem
    \ref{T:halfmain}(a) follows from Proposition \ref{P:relinf}.
\end{proof}

\begin{thm}\label{T:jumprates}
    Let $0<\rho <|x_0|\wedge x_1$. Then $\xi_0=o(\xi_1)$ if and only if
    \begin{equation}\label{t-todeep}
        E^1[\tau^Y_0] \sim E^{x_1}[\tau^X_{B_\rho(x_0)}]
            \sim \la^{-1}
            \sim \frac{2\pi}{|F''(0)F''(x_1)|^{1/2}} e^{(F(0) - F(x_1))/\ep},
    \end{equation}
    as $\ep\to 0$. And $\xi_1/\xi_0\sim\eta(\infty)/\eta(-\infty)$ if and only
    if
    \begin{equation}\label{t-fromdeep}
        E^0[\tau^Y_1] \sim E^{x_0}[\tau^X_{B_\rho(x_1)}]
            \sim \frac{2\pi}{|F''(0)F''(x_0)|^{1/2}} e^{(F(0) - F(x_0))/\ep},
    \end{equation}
    as $\ep\to0$. Moreover, \eqref{t-fromdeep} implies \eqref{t-todeep}, which
    implies \eqref{trackdeep}. Also, \eqref{trackshallow} implies
    \eqref{t-todeep}.
\end{thm}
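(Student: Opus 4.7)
The plan is to reduce everything to the explicit formulas
\[
    E^1[\tau^Y_0] = \frac{1}{a_1} = \frac{1 - \xi_0/\xi_1}{\la}, \qquad
    E^0[\tau^Y_1] = \frac{1}{a_0} = \frac{1 - \xi_1/\xi_0}{\la},
\]
which follow directly from the definition of $a_j$ in Lemma \ref{L:matcond}. Combining these with two previously established facts---the sharp asymptotics $1/\la \sim E^{x_1}[\tau^X_{B_\rho(x_0)}] \sim \frac{2\pi}{|F''(0)F''(x_1)|^{1/2}} e^{(F(0)-F(x_1))/\ep}$ from \eqref{evasym}--\eqref{ttasym}, and the ratio $\eta(\infty)/|\eta(-\infty)| \sim \sqrt{F''(x_1)/F''(x_0)}\, e^{(F(x_1)-F(x_0))/\ep}$ from Proposition \ref{P:relinf}---will deliver the two equivalences.

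For the first equivalence, the formula for $E^1[\tau^Y_0]$ shows immediately that $E^1[\tau^Y_0] \sim 1/\la$ if and only if $\xi_0/\xi_1 \to 0$, and the asymptotic for $1/\la$ then gives \eqref{t-todeep}. For the second equivalence, dividing the right-hand side of \eqref{t-fromdeep} by $1/\la$ and using the same asymptotic transforms \eqref{t-fromdeep} into
\[
    1 - \frac{\xi_1}{\xi_0} \sim \sqrt{\frac{F''(x_1)}{F''(x_0)}}\, e^{(F(x_1)-F(x_0))/\ep}.
\]
The right-hand side tends to $+\infty$, so the additive $1$ is absorbed, and applying Proposition \ref{P:relinf} while remembering $\eta(-\infty)<0$ rewrites this as $\xi_1/\xi_0 \sim \eta(\infty)/\eta(-\infty)$, matching the statement.

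For the three chained implications I would argue purely at the level of these parameterizations, using Lemma \ref{L:matcond} (in particular $\xi_0<0<\xi_1$, $|\xi_0|\le 1/\eta(\infty)$, $\xi_1 \le 1/|\eta(-\infty)|$) together with Theorems \ref{T:halfmain} and \ref{T:halfmain2}. The statement \eqref{t-fromdeep} gives $\xi_1/\xi_0 \sim \eta(\infty)/\eta(-\infty) \to -\infty$, so $\xi_0 = o(\xi_1)$, which is \eqref{t-todeep}. Next, \eqref{t-todeep} combined with $\xi_1 \le 1/|\eta(-\infty)|$ gives $|\xi_0|\cdot|\eta(-\infty)| \to 0$, so Theorem \ref{T:halfmain}(a) yields \eqref{trackdeep}. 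Finally, \eqref{trackshallow} gives $\xi_1 \sim 1/|\eta(-\infty)|$ by Theorem \ref{T:halfmain2}(a), and combined with $|\xi_0| \le 1/\eta(\infty)$ and Proposition \ref{P:relinf} (which forces $|\eta(-\infty)|/\eta(\infty) \to 0$) one obtains $\xi_0/\xi_1 \to 0$, giving \eqref{t-todeep}.

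The argument is essentially a dictionary between the coupling parameters $(\xi_0,\xi_1)$ and the mean transition times of $Y$. The main conceptual work has already been done in Proposition \ref{P:relinf}, which pins down the precise relative magnitude of $\eta(\infty)$ and $\eta(-\infty)$; without it, the second equivalence could not be translated into a statement about $\xi_1/\xi_0$. The remaining subtleties are bookkeeping: tracking signs ($\xi_0<0<\xi_1$, $\eta(-\infty)<0<\eta(\infty)$) and absorbing lower-order terms into the dominant exponential factors.
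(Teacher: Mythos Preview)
Your proposal is correct and follows essentially the same route as the paper: both reduce everything to the identity $E^j[\tau^Y_{1-j}] = a_j^{-1} = \la^{-1}(1 - \xi_{1-j}/\xi_j)$ and then read off the equivalences and implications from \eqref{evasym}, \eqref{ttasym}, Proposition~\ref{P:relinf}, and the constraints of Lemma~\ref{L:matcond}. The only cosmetic difference is in the last implication \eqref{trackshallow} $\Rightarrow$ \eqref{t-todeep}: the paper routes through \eqref{trackdeep} (using the ``Moreover'' clause of Theorem~\ref{T:halfmain2} to get $\xi_0=o(|\eta(-\infty)|^{-1})$ and then combining with $\xi_1\sim|\eta(-\infty)|^{-1}$), whereas you use the bound $|\xi_0|\le 1/\eta(\infty)$ together with Proposition~\ref{P:relinf} directly---both arguments are valid and of the same length. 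One small caution: Lemma~\ref{L:matcond} by itself only gives $\xi_0\xi_1<0$, not $\xi_0<0<\xi_1$; the specific signs you use are justified \emph{a posteriori} in each implication (e.g.\ once \eqref{trackshallow} forces $\xi_1>0$), so the argument is fine, but the parenthetical attribution is slightly imprecise.
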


\begin{proof}
    By \eqref{ttasym} and \eqref{evasym}, we need only determine the asymptotics
    of $a_0$ and $a_1$. Recall that $a_j=\la\xi_j/(\xi_j-\xi_{1-j})$. Thus,
    \begin{equation}\label{jumprates}
        E^j[\tau^Y_{1-j}] = a_j^{-1}
            = \la^{-1}\left({1 - \frac{\xi_{1-j}}{\xi_j}}\right)
            \sim \left({1 - \frac{\xi_{1-j}}{\xi_j}}\right)
                \frac{2\pi}{|F''(0)F''(x_1)|^{1/2}} e^{(F(0) - F(x_1))/\ep},
    \end{equation}
    so the first biconditional follows immediately. The second biconditional
    then follows from Proposition \ref{P:relinf}.

    By Proposition \ref{P:relinf}, we have \eqref{t-fromdeep} implies
    \eqref{t-todeep}. By Lemma \ref{L:matcond}, we have
    $|\xi_0/\xi_1|\ge|\xi_0\eta(-\infty)|$, so that $\xi_0=o(\xi_1)$ implies
    $\xi_0=o(|\eta(-\infty)|^{-1})$. Hence, \eqref{t-todeep} implies that
    Theorem \ref{T:halfmain}(a) holds, which is equivalent to \eqref{trackdeep}.

    Finally, suppose \eqref{trackshallow} holds. By Theorems \ref{T:halfmain}
    and \ref{T:halfmain2}, we have that $\xi_1\sim|\eta(-\infty)|^{-1}$ and
    $\xi_0=o(|\eta(-\infty)|^{-1})$, so that $\xi_0=o(\xi_1)$, which is
    equivalent to \eqref{t-todeep}.
\end{proof}

\begin{thm}\label{T:fulltrack}
    The Markov chain fully tracks the diffusion, in the sense that
    \eqref{trackdeep}-\eqref{time2shallow} all hold, if and only if
    $\xi_0\sim-\eta(\infty)^{-1}$ and $\xi_1\sim|\eta(-\infty)|^{-1}$.
\end{thm}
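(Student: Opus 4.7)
The plan is to obtain Theorem \ref{T:fulltrack} as a more or less direct consequence of the preceding theorems, by combining the biconditional characterizations in Theorems \ref{T:halfmain}, \ref{T:halfmain2}, and \ref{T:jumprates} with the asymptotic ratio in Proposition \ref{P:relinf}. The key quantitative input from \ref{P:relinf} is that $\eta(\infty)/|\eta(-\infty)|\to\infty$ as $\ep\to0$ (since $F(x_1)>F(x_0)$), so that $\eta(\infty)^{-1}=o(|\eta(-\infty)|^{-1})$. I expect this argument to be short; the main thing to be careful about is keeping the signs straight, since $\eta(-\infty)<0<\eta(\infty)$ and Lemma \ref{L:matcond}(ii) forces $\xi_0<0<\xi_1$.

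For the ``if'' direction, I would assume $\xi_0\sim-\eta(\infty)^{-1}$ and $\xi_1\sim|\eta(-\infty)|^{-1}$ and verify each of \eqref{trackdeep}--\eqref{time2shallow}. Since $|\xi_0|\sim\eta(\infty)^{-1}=o(|\eta(-\infty)|^{-1})$ by Proposition \ref{P:relinf}, condition (a) of Theorem \ref{T:halfmain} holds, yielding \eqref{trackdeep}. The hypothesis $\xi_1\sim|\eta(-\infty)|^{-1}$ is exactly condition (a) of Theorem \ref{T:halfmain2}, giving \eqref{trackshallow}. For the rate conditions, I would compute
\[
    \frac{\xi_0}{\xi_1}
        \sim \frac{-\eta(\infty)^{-1}}{|\eta(-\infty)|^{-1}}
        = -\frac{|\eta(-\infty)|}{\eta(\infty)} \to 0,
\]
so $\xi_0=o(\xi_1)$, which by Theorem \ref{T:jumprates} is \eqref{time2deep}. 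Inverting,
\[
    \frac{\xi_1}{\xi_0}
        \sim -\frac{\eta(\infty)}{|\eta(-\infty)|}
        = \frac{\eta(\infty)}{\eta(-\infty)},
\]
which by Theorem \ref{T:jumprates} gives \eqref{time2shallow}.

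For the ``only if'' direction, the hypothesis includes both \eqref{trackshallow} and \eqref{time2shallow}. By Theorem \ref{T:halfmain2}, \eqref{trackshallow} yields $\xi_1\sim|\eta(-\infty)|^{-1}$ directly. By Theorem \ref{T:jumprates}, \eqref{time2shallow} (which is \eqref{t-fromdeep}) gives $\xi_1/\xi_0\sim\eta(\infty)/\eta(-\infty)$, and therefore
\[
    \xi_0 \sim \xi_1\cdot\frac{\eta(-\infty)}{\eta(\infty)}
        \sim \frac{1}{|\eta(-\infty)|}\cdot\frac{\eta(-\infty)}{\eta(\infty)}
        = -\frac{1}{\eta(\infty)},
\]
as required. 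No additional estimates are needed, since conditions \eqref{trackdeep} and \eqref{time2deep} are automatically implied by the two we have already used (as already noted in Theorems \ref{T:halfmain2} and \ref{T:jumprates}), so they contribute no new information. The only potential pitfall is the final sign calculation, which I would double-check using $\eta(-\infty)=-|\eta(-\infty)|$ to confirm the minus sign in $\xi_0\sim-\eta(\infty)^{-1}$.
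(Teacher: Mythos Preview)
Your proposal is correct and follows essentially the same route as the paper's proof: both directions are obtained by combining the biconditionals in Theorems \ref{T:halfmain2} and \ref{T:jumprates} with the ratio asymptotic from Proposition \ref{P:relinf}. The only cosmetic difference is that in the ``if'' direction the paper obtains \eqref{trackdeep} by citing the implication \eqref{trackshallow}$\Rightarrow$\eqref{trackdeep} already contained in Theorem \ref{T:halfmain2}, whereas you verify Theorem \ref{T:halfmain}(a) directly; the arguments are otherwise identical.
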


\begin{proof}
    Suppose \eqref{trackdeep}-\eqref{time2shallow} hold. Then, by Theorem
    \ref{T:halfmain2}, we have $\xi_1\sim|\eta(-\infty)|^{-1}$. Since
    \eqref{time2shallow} is equivalent to \eqref{t-fromdeep}, we also have, by
    Theorem \ref{T:jumprates}, that $\xi_1/\xi_0\sim\eta(\infty)/\eta(-\infty)$.
    Thus, $\xi_0\sim-\eta(\infty)^{-1}$.

    Conversely, suppose $\xi_0\sim-\eta(\infty)^{-1}$ and
    $\xi_1\sim|\eta(-\infty)|^{-1}$. Theorem \ref{T:halfmain2} gives us
    \eqref{trackshallow} and \eqref{trackdeep}. Theorem \ref{T:jumprates} gives
    us \eqref{t-fromdeep} and \eqref{t-todeep}, which are equivalent to
    \eqref{time2shallow} and \eqref{time2deep}, respectively.
\end{proof}

In this section, we have established that \eqref{trackshallow} implies
\eqref{time2deep} implies \eqref{trackdeep}, and \eqref{time2shallow} implies
\eqref{time2deep}. Example \ref{1steg} shows that it is possible to have all
four conditions holding. The remaining examples illustrate that there are no
implications besides those already mentioned.

\begin{expl}\label{1steg}
    Let $\xi_0=-f(\ep)\eta(\infty)^{-1}$ and $\xi_1=g(\ep)|\eta(-\infty)|^{-1}$,
    where $0<f,g\le 1$ with $f,g\to 1$ as $\ep\to 0$. By Lemma \ref{L:matcond}
    and Theorem \ref{T:fulltrack}, this is the most general family of choices
    such that the resulting coupling sequence satisfies
    \eqref{trackdeep}-\eqref{time2shallow}.  
\end{expl}

In the remaining examples, let
\[
    L(\ep) = \sqrt{\frac{F''(x_1)}{F''(x_0)}}\,e^{-(F(x_1) - F(x_0))/\ep},
\]
so that by Proposition \ref{P:relinf}, we have
$\eta(\infty)/\eta(-\infty)\sim-L(\ep)^{-1}$. Choose $0<f\le 1$ and $h\ge L$,
and let $g=L/h$, so that $0<g\le 1$. Let $\xi_0=-f(\ep)\eta(\infty)^{-1}$ and
$\xi_1=g(\ep)|\eta(-\infty)|^{-1}$. By Lemma \ref{L:matcond}, these are
admissible choices for $\xi_0$ and $\xi_1$.

Note that $\xi_0\sim-f(\ep)L(\ep)|\eta(-\infty)|^{-1}$, so that by Theorem
\ref{T:halfmain}, we have \eqref{trackdeep} in all these examples. Also note
that by Theorem \ref{T:halfmain2}, we have \eqref{trackshallow} if and only if
$h\sim L$. For applying Theorem \ref{T:jumprates}, note that
$\xi_1/\xi_0\sim-g/(fL)=-1/(fh)$. Thus, \eqref{time2deep} holds if and only if
$fh\to 0$ and \eqref{time2shallow} holds if and only $fh\sim L$.

\begin{expl}
    Let $f=h=1$. Then none of \eqref{trackshallow}, \eqref{time2deep}, or
    \eqref{time2shallow} hold, so we see that \eqref{trackdeep} does not imply
    any of the other conditions.
\end{expl}

\begin{expl}
    Let $f=1$ and $h=\sqrt {L}$. In this case, we have \eqref{time2deep}, but
    neither \eqref{trackshallow} nor \eqref{time2shallow} hold. Hence,
    \eqref{time2deep} implies neither \eqref{trackshallow} nor
    \eqref{time2shallow}.
\end{expl}

\begin{expl}
    Let $f=h=L$. In this case, \eqref{trackshallow} and \eqref{time2deep} hold,
    but \eqref{time2shallow} does not, showing that \eqref{trackshallow} does
    not imply \eqref{time2shallow}.
\end{expl}

\begin{expl}
    Let $f=h=\sqrt L$. Here we have \eqref{time2deep} and \eqref{time2shallow},
    but not \eqref{trackshallow}, showing that \eqref{time2shallow} does not
    imply \eqref{trackshallow}.
\end{expl}

\section*{Acknowledgments}

This paper was completed while the first author was visiting the University of
California, San Diego with the support of the Charles Lee Powell Foundation. The
hospitality of that institution, particularly that of Professor Ruth Williams,
was greatly appreciated.

\renewcommand{\theequation}{A.\arabic{equation}}
\appendix
\setcounter{equation}{0}

\section{Appendix}

\subsection{Results of Freidlin and Wentzell}

Let $b:\bR^d\to\bR^d$ be Lipschitz and let $\ph_{x,b}$ be the unique solution to
$\ph'_{x,b}=b(\ph_{x,b})$ with $\ph_{x,b}(0)=x$. For $\ep>0$, let $X_{\ep,b}$ be
defined by
\[
    X_{\ep,b}(t)
        = X_{\ep,b}(0) + \int_0^t b(X_{\ep,b}(s))\,ds + \sqrt{2\ep}W(t),
\]
where $W$ is a standard $d$-dimensional Brownian motion. As in Section
\ref{S:intro}, if $F:\bR^d\to\bR$ is given, then $\ph_x=\ph_{x,-\nab F}$ and
$X_\ep = X_{\ep,-\nab F}$. For the $F$ we use later, $-\nab F$ is \textit{not}
Lipschitz. This will cause no difficulty, however, since it will be locally
Lipschitz, and we will only apply these theorems on compact sets.

This first theorem is \cite[Theorem 2.40]{Olivieri2005}. It describes the
asymptotic mean time to leave a domain of attraction.

\begin{thm}\label{T:FandW} 
    Let $F:\bR^d\to\bR$ have continuous and bounded derivatives up to second
    order. Let $D$ be a bounded open domain in $\bR^d$ with boundary $\pa D$ of
    class $C^2$ and $\ang{-\nab F(x),n(x)} <0$ for all $x\in\pa D$, where $n(x)$
    is the outward unit normal vector to $\pa D$ at $x$.

    Let $x_0\in D$. Assume that if $G$ is a neighborhood of $x_0$, then there
    exists a neighborhood $\wt G$ of $x_0$ such that $\wt G\subset G$ and, for
    all $x\in\wt G$, we have $\ph_x([0,\infty))\subset G$ and $\ph_x(t)\to x_0$
    as $t\to\infty$.  Further assume that, for each $x\in\ol D$, we have
    $\ph_x((0,\infty))\subset D$.

    Then for any $x\in D$,
    \begin{enumerate}[(i)] 
        \item   $\ds{
                    \lim_{\ep\to 0}2\ep\log E^x[\tau(D^c)]
                        = \inf_{y\in\pa D}2(F(y)-F(x_0)) =: V_0
                }$, and  
        \item   for all $\ze>0$, we have $\ds{
                    \lim_{\ep\to 0} P^x(e^{(V_0-\ze)/(2\ep)}
                        < \tau (D^c)<e^{(V_0+\ze)/(2\ep)}) = 1
                }$.
    \end{enumerate}
    Moreover, both convergences hold uniformly in $x$ on each compact subset of
    $D$.  
\end{thm}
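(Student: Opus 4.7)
The plan is to derive both claims from the Freidlin--Wentzell sample-path large deviation principle for $\{X_{\ep,-\nab F}\}$ on $C([0,T];\bR^d)$ with speed $(2\ep)^{-1}$ and good rate function
\[
I_T^x(\ph) = \frac12\int_0^T |\dot\ph(s) + \nab F(\ph(s))|^2\,ds
\]
for absolutely continuous $\ph$ with $\ph(0)=x$, and $+\infty$ otherwise. The key input from the gradient structure is that the quasipotential $V(x_0,y) = \inf\{I_T^{x_0}(\ph) : T>0,\ \ph(T)=y\}$ equals $2(F(y)-F(x_0))$ whenever $y$ lies in the basin of $x_0$, which can be verified by reversing the deterministic descent $\dot\psi = +\nab F(\psi)$ from $y$ and integrating $\frac{d}{ds}F(\psi) = |\nab F(\psi)|^2$. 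In particular, $V_0 = \inf_{y\in\pa D} V(x_0,y)$.

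For the upper bound in (i) and (ii), I would fix $\ze>0$, pick a near-minimizer $y^*\in\pa D$, and construct a control path from a neighborhood of $x_0$ into a small ball about $y^*$ lying outside $\overline D$ with total action strictly below $V_0+\ze/4$. The LDP lower bound then yields
\[
\inf_{x\in N} P^x(\tau(D^c)\le T) \ge \exp(-(V_0+\ze/2)/(2\ep))
\]
on some fixed neighborhood $N$ of $x_0$ and some fixed time $T$, uniformly for small $\ep$. Because the hypothesis on $\ph_x$ guarantees that the deterministic flow from any compact $K\subset D$ enters $N$ in bounded time (and the stochastic flow tracks it with probability tending to one), an elementary strong-Markov/geometric-trials argument converts this into the uniform bound $E^x[\tau(D^c)]\le C\exp((V_0+\ze)/(2\ep))$ on $K$, and also gives $P^x(\tau(D^c)<e^{(V_0+\ze)/(2\ep)})\to 1$.

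For the lower bound, I would choose $T$ so that $\ph_x$ remains in $D$ on $[0,T]$ for every $x\in\overline{D}$; this is possible because the strict inward-pointing condition $\langle -\nab F,n\rangle<0$ on the $C^2$ boundary, combined with the assumption $\ph_x((0,\infty))\subset D$ for $x\in\overline D$, pushes trajectories away from $\pa D$. Any sample path that exits in $[0,T]$ must then remain a fixed distance from $\ph_x$, and since this set of paths is closed with $I_T^x$-value bounded below by $V_0-\ze/4$ (approaching the quasipotential as paths are optimized), the LDP upper bound yields $\sup_{x\in K} P^x(\tau(D^c)\le T)\le\exp(-(V_0-\ze/2)/(2\ep))$. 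Iterating over consecutive windows of length $T$ via the Markov property produces the geometric tail $P^x(\tau(D^c)>kT)\ge (1-\exp(-(V_0-\ze/2)/(2\ep)))^k$, from which both $E^x[\tau(D^c)]\ge c\exp((V_0-\ze)/(2\ep))$ and $P^x(\tau(D^c)>e^{(V_0-\ze)/(2\ep)})\to 1$ follow.

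The main obstacle is securing uniformity on compact subsets of $D$ simultaneously for both directions: the upper-bound step needs excursions near $x_0$ not to consume too much time, while the lower-bound step needs the single-window LDP estimate to be genuinely uniform in the starting point. The $C^2$ boundary and strict inward drift provide the stability needed to make the Freidlin--Wentzell scheme quantitative on all of $\overline{D}$, and the standing assumption that deterministic orbits are trapped near $x_0$ prevents trivial ``free'' exits that would invalidate the quasipotential identification $V_0 = 2\inf_{y\in\pa D}(F(y)-F(x_0))$.
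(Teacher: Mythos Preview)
The paper does not prove this statement at all: it is quoted in the appendix as \cite[Theorem 2.40]{Olivieri2005} without argument, serving purely as a reference result on which the main body relies. Your sketch is the standard Freidlin--Wentzell route (sample-path LDP, quasipotential identification $V(x_0,y)=2(F(y)-F(x_0))$ from the gradient structure, then upper and lower exit-time bounds by a geometric-trials / Markov iteration), which is essentially what the cited reference does; so there is no discrepancy to discuss, only the observation that the authors chose to import the theorem rather than reprove it.
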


This next theorem is \cite[Lemma 2.34(b)]{Olivieri2005}. It asserts that the
diffusion cannot linger for long inside the domain of attraction without quickly
coming into a small neighborhood of the associated minimum.

\begin{thm}\label{T:cantdance} 
    Assume the hypotheses of Theorem \ref{T:FandW}. Fix $\de>0$. Then there
    exists $C>0$, $T>0$, and $\ep_0>0$ such that
    \[
        P^x(\tau(D^c \cup B_\de(x_0)) > t) \le e^{-C(t - T)/(2\ep)},
    \]
    for all $x\in\ol D\setminus B_\de(x_0)$, all $t>T$, and all $\ep<\ep_0$.
\end{thm}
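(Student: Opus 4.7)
The plan is to reduce the long-time bound to a one-step estimate and then iterate via the Markov property. Write $U = \ol D \setminus B_\de(x_0)$ and $H = D^c \cup B_\de(x_0)$, and let $\tau = \tau^{X_\ep}(H)$.

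First I would exploit the deterministic dynamics. Since $\ph_x(t) \to x_0$ for every $x \in \ol D$ (using the hypotheses of Theorem \ref{T:FandW}, including forward invariance near $x_0$), a compactness argument produces a time $T_0 > 0$ such that $\ph_x(T_0) \in B_{\de/2}(x_0)$ for every $x \in \ol D$: for each $x$, choose $t_x$ with $\ph_x(t_x) \in B_{\de/4}(x_0)$; by continuity of the flow map $(y,s) \mapsto \ph_y(s)$ on $\ol D \times [0, t_x]$, there is an open neighborhood $N_x$ on which $\ph_y(t_x) \in B_{\de/3}(x_0)$ for $y \in N_x$; extract a finite subcover of $\ol D$ and take $T_0$ larger than all the selected $t_x$.

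Next I would apply the Freidlin--Wentzell uniform large-deviation principle on $[0,T_0]$ with rate function $I_{0,T_0}(\psi) = \tfrac14 \int_0^{T_0} |\dot\psi(s) + \nab F(\psi(s))|^2\,ds$, and define
\[
I^\ast := \inf_{x \in \ol D}\inf\left\{I_{0,T_0}(\psi): \psi(0) = x,\ \sup_{s \le T_0}|\psi(s) - \ph_x(s)| \ge \tfrac\de 2\right\}.
\]
Compactness of $\ol D$ together with lower semicontinuity of the rate function yields $I^\ast > 0$, so with $C_0 = I^\ast/2$ the uniform LDP gives
\[
\sup_{x \in \ol D} P^x\!\left(\sup_{s \le T_0}|X_\ep(s) - \ph_x(s)| > \de/2\right) \le e^{-C_0/(2\ep)}
\]
for $\ep$ sufficiently small. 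On the complement of this event, $X_\ep(T_0) \in B_\de(x_0)$, hence $\tau \le T_0$. Setting $T = T_0$, we obtain $q_\ep := \sup_{x \in U} P^x(\tau > T) \le e^{-C_0/(2\ep)}$.

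Finally I would iterate. On $\{\tau > jT\}$, path continuity forces $X_\ep(jT) \in D \setminus B_\de(x_0) \subset U$, so the Markov property at time $(k-1)T$ gives $P^x(\tau > kT) \le q_\ep P^x(\tau > (k-1)T)$, and inductively $P^x(\tau > kT) \le q_\ep^k \le e^{-kC_0/(2\ep)}$. For arbitrary $t > T$, take $k = \lfloor t/T \rfloor \ge 1$; then $kT \ge t - T$ yields the claimed bound with $C = C_0/T$. The main obstacle is the uniform lower bound $I^\ast > 0$ in the second step: one must rule out degeneration as the starting point approaches $\pa D$. This uses the inward drift hypothesis $\ang{-\nab F, n} < 0$ on $\pa D$ (so the deterministic flow does not push toward the boundary from nearby interior points) together with the fact that any macroscopic deviation from $\ph_x$ on $[0,T_0]$ carries a strictly positive action uniformly in $x \in \ol D$. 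With that in hand, the Markov iteration is routine.
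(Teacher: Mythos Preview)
The paper does not supply its own proof of this statement; it is quoted directly as \cite[Lemma 2.34(b)]{Olivieri2005} in the appendix. Your sketch is the standard argument one finds in that reference: a one-step estimate on a fixed time window obtained from the uniform Freidlin--Wentzell large-deviation bound, followed by geometric iteration via the Markov property.

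Two small comments on the sketch itself. First, the claim that $\ph_x(t)\to x_0$ for every $x\in\ol D$ is not literally contained in the hypotheses of Theorem~\ref{T:FandW} as stated (those give local asymptotic stability of $x_0$ together with forward invariance of $D$); for the gradient systems of this paper it follows because $F$ is a strict Lyapunov function and $x_0$ is the only critical point in $D$, but in general one needs the additional assumption that $x_0$ attracts all of $\ol D$, which is indeed part of the setup in \cite{Olivieri2005}. Second, in your compactness step you obtain $\ph_y(t_x)\in B_{\de/3}(x_0)$ on each patch $N_x$ but then take $T_0$ larger than all the $t_x$; to conclude $\ph_y(T_0)\in B_{\de/2}(x_0)$ you must invoke the forward-invariance part of the stability hypothesis (choose $\wt G\subset B_{\de/2}(x_0)$ with $\ph_z([0,\infty))\subset B_{\de/2}(x_0)$ for $z\in\wt G$, and aim the flow into $\wt G$ rather than $B_{\de/4}(x_0)$). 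With those adjustments the argument is correct and matches the cited proof.
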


The last result we need gives the probability of leaving the domain of
attraction through a given point. To state this result, we need some preliminary
notation and definitions. See \cite[Section 5.3]{Olivieri2005} for more details.

Let $u:[0,T]\to\bR^d$. If $u$ is absolutely continuous, define
\[
    I_T(u) = \frac12\int_0^T |u'(s) - b(u(s))|^2\,ds,
\]
and define $I_T(u)=\infty$ otherwise.

Let $G$ be a bounded domain in $\bR^d$ with $\pa G$ of class $C^2$ and define
\begin{align*}
    V(x,y) &= \inf\{I_T(u) \mid T > 0,
        u:[0,T] \to \bR^d,
        u(0) = x, u(T) = y\}\\
    V_G(x,y) &= \inf\{I_T(u) \mid T > 0,
        u:[0,T] \to G \cup \pa G,
        u(0) = x, u(T) = y\}.
\end{align*}
The functions $V$ and $V_G$ are continuous on $\bR^d\times\bR^d$ and $(G\cup\pa
G)\times (G\cup\pa G)$, respectively. We have $V_G(x,y)\ge V(x,y)$ for all
$x,y\in G\cup\pa G$. Also, for all $x,y\in G$, if $V_G(x,y)\le\min_{z\in\pa
G}V(x,z)$, then $V_G(x,y)=V(x,y)$.

Note that if $\ph_{x,b}(t)=y$ for some $t>0$ and $\ph_{x,b}([0,t])\subset
G\cup\pa G$, then $V_G(x,y)=0$. An equivalence relation on $G\cup\pa G$ is
defined by $x\sim_Gy$ if and only if $V_G(x,y)=V_G(y,x)=0$. It can be shown that
if the equivalence class of $y$ is nontrivial, then $\ph_{y,b}([0,\infty))$ is
contained in that equivalence class.

The $\om$-limit set of a point $y\in\bR^d$ is denoted by $\om(y)$ and defined as
the set of accumulation points of $\ph_{y,b}([0,\infty))$. Assume that $G$
contains a finite number of compact sets $K_1,\ldots,K_{\ell}$ such that each
$K_i$ is an equivalence class of $\sim_G$. Assume further that, for all
$y\in\bR^d$, if $\om(y)\subset G\cup\pa G$, then $\om(y)\subset K_i$ for some
$i$.

The function $V_G$ is constant on $K_i\times K_j$, so we let $V_G(K_i,K_j)$,
$V_G(x,K_i)$, and $V_G(K_i,x)$ denote this common value. Also, $V_G(K_i,\pa
G)=\inf_{y\in\pa G}V_G(K_i,y)$.  

Given a finite set $\cL$ and a nonempty, proper subset $\cQ\subset\cL$, let
$\bG(\cQ)$ denote the set of directed graphs on $\cL$ with arrows $i\to j$,
$i\in\cL\setminus\cQ$, $j\in\cL$, $j\ne i$, such that: (i) from each
$i\in\cL\setminus\cQ$ exactly one arrow is issued; (ii) for each
$i\in\cL\setminus\cQ$ there is a chain of arrows starting at $i$ and finishing
at some point in $\cQ$. If $j$ is such a point we say that the graph leads $i$
to $j$. For $i\in\cL\setminus\cQ$ and $j\in\cQ$, the set of graphs in $\bG(\cQ)$
leading $i$ to $j$ is denoted by $\bG_{i,j}(\cQ)$.

With $\cL=\{K_1,\ldots,K_\ell,\pa G\}$, let
\[
    M_G = \min_{g\in\bG(\pa G)}\sum_{(\al\to\be)\in g} V_G(\al,\be).
\]
If $x\in G$ and $y\in\pa G$, then with $\cL=\{K_1,\ldots,K_\ell,x,y,\pa G\}$,
let
\[
    M_G(x,y)
        = \min_{g\in\bG_{x,y}(\{y,\pa G\})}\sum_{(\al\to\be)\in g} V_G(\al,\be).
\]
The following theorem is \cite[Theorem 5.19]{Olivieri2005}.

\begin{thm}\label{T:FandW2}
Under the above assumptions and notation, for any compact set $K\subset G$,
$\ga>0$, and $\de>0$, there exists $\ep_0>0$ and $\de_0\in(0,\de)$ so that for
any $x\in K$, $y\in\pa G$, and $\ep\in(0,\ep_0)$, we have
\[
    \exp\left({-\frac{M_G(x,y) - M_G + 2\ga}{2\ep}}\right)
        \le P^x(X_{\ep,b}(\tau) \in B_{\de_0}(y))
        \le\exp\left({-\frac{M_G(x,y) - M_G - 2\ga}{2\ep}}\right),
\]
where $\tau=\tau^{X_{\ep,b}}(\bR^d\setminus G)$.
\end{thm}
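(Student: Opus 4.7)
The plan is to reduce the continuous-time exit problem for $X_{\ep,b}$ to a discrete Markov-chain analysis on the finite set $\cL=\{K_1,\ldots,K_\ell,\pa G\}$, where the transition weights are controlled by the quasi-potential $V_G$, and then read off the exit law via a matrix-tree (Freidlin-Wentzell $W$-graph) identity. The first preliminary step is to set up the sample-path large deviation principle for $X_{\ep,b}$ on finite time intervals, which yields the rate function $I_T$ and, via a contraction and infimum over connecting trajectories, the estimate
\[
    e^{-(V(x,y)+\ga)/(2\ep)}
        \le P^x\bigl(\tau^{X_{\ep,b}}_{B_{\de_0}(y)} < T\bigr)
        \le e^{-(V(x,y)-\ga)/(2\ep)},
\]
uniformly for $x,y$ in a compact set, with an analogous bound inside $G\cup\pa G$ for $V_G$. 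This is the standard Freidlin-Wentzell machinery, so I would quote the compactness of level sets of $I_T$ and the lower/upper semicontinuity arguments rather than reprove them.

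Next I would construct a discrete skeleton chain. Fix small radii $0<\de_0<\de<\de'$ and define successive stopping times $\si_0<\tau_0<\si_1<\tau_1<\cdots$, where $\si_k$ is the first time after $\tau_{k-1}$ that $X_{\ep,b}$ enters $\bigcup_i B_{\de_0}(K_i)\cup\pa G$, and $\tau_k$ is the first time after $\si_k$ that it exits the corresponding $\de'$-neighborhood. Together with Theorem \ref{T:cantdance} (and its local version near each $K_i$), the single-step transition probabilities of $X_{\ep,b}(\si_k)$ are shown to be of the form $\exp(-(V_G(\al,\be)+o(1))/(2\ep))$ for $\al,\be\in\cL$. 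The exit event $\{X_{\ep,b}(\tau)\in B_{\de_0}(y)\}$ is then (up to events of exponentially smaller probability) the same as the event that the skeleton chain, started from the element of $\cL$ nearest $x$, reaches $y$ before any other point of $\pa G$.

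Now the main tool is the Freidlin-Wentzell $W$-graph formula. For an irreducible Markov chain on $\cL$ with transition rates $q(\al,\be)$, the probability of absorption at a particular boundary point $y$ starting from $x$ is given by a ratio whose numerator is a sum over directed trees in $\bG_{x,y}(\{y,\pa G\})$ and whose denominator is a sum over trees in $\bG(\pa G)$, with each tree weighted by the product of its edge rates. Substituting the asymptotics $q(\al,\be)\asymp\exp(-V_G(\al,\be)/(2\ep))$ and applying Laplace's principle to each sum of exponentials, the numerator has leading order $\exp(-M_G(x,y)/(2\ep))$ and the denominator has leading order $\exp(-M_G/(2\ep))$, producing the bounds in the theorem.

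The main obstacle is obtaining the estimates with the correct \emph{uniformity} (in $x\in K$, and in $y\in\pa G$) and correctly handling the starting point $x$, which is not a priori one of the $K_i$'s. This requires separately bounding the cost of the first ``leg'' from $x$ into $\bigcup_iB_{\de_0}(K_i)$ (for which one needs $V_G(x,K_i)$ to enter the tree weights through the edges out of $x$) and ensuring the error terms $\ga$ absorb all the $o(1)$ slacks accumulated over the skeleton steps; a geometric argument bounding the expected number of skeleton returns by $e^{\ga/(3\ep)}$, say, keeps the compounded error under control. The $W$-graph bookkeeping itself is combinatorial but routine; the delicate analytic input is the uniform Laplace estimate on sums over trees, which one proves by noting that the minimizing tree is attained on a finite set and that all other trees contribute an exponentially smaller amount.
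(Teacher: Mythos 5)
This theorem is not proved in the paper at all: it is imported verbatim as \cite[Theorem 5.19]{Olivieri2005}, so there is no in-paper argument to compare against. Your outline is, in substance, the standard Freidlin--Wentzell proof that the cited source (and Freidlin--Wentzell, Chapter 6) actually gives: pathwise large deviations with rate $I_T$, an embedded skeleton chain on small neighborhoods of the equivalence classes $K_i$ and of $\pa G$ with one-step transition probabilities $\exp(-(V_G(\al,\be)+o(1))/(2\ep))$, and the $W$-graph (tree) estimates for absorption probabilities, with Laplace asymptotics identifying the numerator and denominator exponents as $M_G(x,y)$ and $M_G$. So the route is the right one and matches the literature proof the paper relies on.

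That said, as written it is a sketch rather than a proof: the points you yourself flag as ``the main obstacle'' are exactly where the real work lies, and they are deferred rather than carried out. In particular, the skeleton process $X_{\ep,b}(\si_k)$ is not a Markov chain on the finite set $\cL$ --- its transition law depends on the precise entry point into each $\de_0$-neighborhood --- so one must prove \emph{uniform} (over starting points in each neighborhood, and over $y\in\pa G$ via continuity of $V_G$ and compactness of $\pa G$) two-sided one-step bounds and then use graph lemmas that require only such bounds, not exact transition probabilities; the special role of the initial state $x$ (which is why $x$ and $y$ are adjoined to $\cL$ in the definition of $M_G(x,y)$) and the control of the number of skeleton steps also need the quantitative input of Theorem \ref{T:cantdance}-type estimates to keep the accumulated errors within the $\ga$-slack. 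None of this is fatal --- it is precisely what \cite{Olivieri2005} does --- but your write-up establishes the theorem only modulo quoting that machinery, which is effectively what the paper itself does by citing the result.
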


The next two results are auxiliary results which are needed to apply Theorem
\ref{T:FandW2}. The first is \cite[Proposition 2.37]{Olivieri2005}.

\begin{prop}\label{P:FandW}
    Under the assumptions of Theorem \ref{T:FandW}, we have
    \[
        V(x_0,y) = 2(F(y) - F(x_0)),
    \]
    for all $y\in\ol D$.
\end{prop}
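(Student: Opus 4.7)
The plan is a two-sided estimate: use the algebraic identity $|v-b|^2=|v+b|^2-4v\cdot b$ to get a clean lower bound, and construct a near-optimal path by time-reversing the gradient flow $\varphi_y$ from $y$ to $x_0$ to get the matching upper bound.

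For the lower bound, fix any absolutely continuous $u:[0,T]\to\bR^d$ with $u(0)=x_0$, $u(T)=y$. Since $b=-\nabla F$, the identity above gives
\[
    |u'(s)-b(u(s))|^2 \ge -4\,u'(s)\cdot b(u(s))
        = 4\,\nabla F(u(s))\cdot u'(s)
        = 4\,\frac{d}{ds}F(u(s)).
\]
Integrating over $[0,T]$ yields
\[
    I_T(u) = \frac12\int_0^T|u'(s)-b(u(s))|^2\,ds
        \ge 2\bigl(F(u(T))-F(u(0))\bigr)
        = 2(F(y)-F(x_0)),
\]
and taking the infimum over $T$ and $u$ gives $V(x_0,y)\ge 2(F(y)-F(x_0))$. (For non-absolutely continuous $u$ the action is infinite, so there is nothing to check.)

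For the upper bound, the plan is to construct a family of admissible paths whose action approaches $2(F(y)-F(x_0))$. Let $\psi(t)=\varphi_y(t)$ be the solution of $\psi'=-\nabla F(\psi)$ with $\psi(0)=y$. The hypotheses of Theorem \ref{T:FandW} ensure that $\psi(t)\to x_0$ as $t\to\infty$. For each large $T$, time-reverse $\psi$ on $[0,T]$ by setting $\tilde u(s)=\psi(T-s)$ for $s\in[0,T]$; then $\tilde u(T)=y$ and
\[
    \tilde u'(s)-b(\tilde u(s))
        = \nabla F(\tilde u(s)) - (-\nabla F(\tilde u(s)))
        = 2\nabla F(\tilde u(s)),
\]
so along this path $|\tilde u'-b(\tilde u)|^2 = 4|\nabla F(\tilde u)|^2 = 4\,\frac{d}{ds}F(\tilde u(s))$. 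Hence $I_T(\tilde u)=2(F(y)-F(\psi(T)))$, which tends to $2(F(y)-F(x_0))$ as $T\to\infty$.

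The remaining obstacle, and the delicate step, is that $\tilde u(0)=\psi(T)$ is only close to $x_0$, not equal to it: because $x_0$ is an equilibrium of $-\nabla F$, the trajectory $\psi$ never actually reaches $x_0$ in finite time, and so the reversed trajectory cannot start from $x_0$. To fix this, prepend a short connecting segment $\gamma:[0,\tau]\to\bR^d$ with $\gamma(0)=x_0$, $\gamma(\tau)=\psi(T)$, e.g.\ a straight line traversed at unit speed, so $\tau=|\psi(T)-x_0|$. Since $b$ is bounded in a neighborhood of $x_0$, we can bound
\[
    I_\tau(\gamma) \le \tfrac12(1+\|b\|_{L^\infty(U)})^2\,\tau,
\]
where $U$ is a fixed neighborhood of $x_0$; this goes to $0$ as $T\to\infty$ since $\psi(T)\to x_0$. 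Concatenating $\gamma$ and $\tilde u$ yields an admissible path from $x_0$ to $y$ of length $\tau+T$, with total action $I_\tau(\gamma)+I_T(\tilde u)\to 2(F(y)-F(x_0))$. Taking the infimum gives $V(x_0,y)\le 2(F(y)-F(x_0))$, completing the proof.
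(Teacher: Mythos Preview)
The paper does not supply its own proof of this proposition; it simply cites \cite[Proposition 2.37]{Olivieri2005}. So there is no in-paper argument to compare against directly. That said, your approach is the standard one and is exactly the computation the paper carries out in the very next result, Lemma~\ref{L:FandW}: the identity $|u'-b|^2=|u'+b|^2-4\langle u',b\rangle$ gives the lower bound $I_T(u)\ge 2(F(y)-F(x_0))$, and the time-reversed gradient trajectory realizes the minimum. The only extra wrinkle here compared to Lemma~\ref{L:FandW} is that the flow from $y$ reaches $x_0$ only asymptotically, which you handle correctly by prepending a short linear segment whose action vanishes in the limit.

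One point deserves a sentence of justification. You assert that $\psi(t)=\varphi_y(t)\to x_0$ for every $y\in\ol D$, citing ``the hypotheses of Theorem~\ref{T:FandW}.'' As written, those hypotheses only guarantee \emph{local} asymptotic stability of $x_0$ together with $\varphi_y((0,\infty))\subset D$ for $y\in\ol D$; they do not literally say that $x_0$ attracts all of $\ol D$. In the gradient setting this gap is easy to close: $F$ is strictly decreasing along nonconstant trajectories, so the $\omega$-limit set of $\varphi_y$ is a connected set of equilibria in $\ol D$; under the intended Freidlin--Wentzell hypotheses (and certainly in every application in this paper) $x_0$ is the unique such equilibrium, whence $\varphi_y(t)\to x_0$. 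It is worth saying this explicitly rather than leaving it implicit. With that addition, your proof is complete and correct.
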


\begin{lemma}\label{L:FandW} 
    Let $b=-\nab F$, where $F$ is as in Theorem \ref{T:FandW}. If there exists
    $T_0>0$ such that $\ph_x(T_0)=y$, then $V(x,y)=0$ and
    $V(y,x)=2(F(x)-F(y))$.
\end{lemma}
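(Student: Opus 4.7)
The plan is to establish the two claims separately, both by direct computation with the rate functional $I_T$ and the identity $b=-\nab F$.

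For $V(x,y)=0$: I would use the gradient flow itself as a competitor. Set $u=\ph_x$ restricted to $[0,T_0]$. Since $u'(s)=-\nab F(u(s))=b(u(s))$, the integrand in $I_{T_0}(u)$ vanishes identically, so $I_{T_0}(u)=0$. Taking the infimum gives $V(x,y)\le0$, and since $I_T\ge0$ always, equality holds.

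For $V(y,x)=2(F(x)-F(y))$, I would prove matching upper and lower bounds. For the upper bound, take the time-reversed trajectory $u(s)=\ph_x(T_0-s)$, which satisfies $u(0)=y$, $u(T_0)=x$, and $u'(s)=\nab F(u(s))=-b(u(s))$. Then $|u'(s)-b(u(s))|^2=4|\nab F(u(s))|^2$, and since $\tfrac{d}{ds}F(u(s))=\nab F(u(s))\cdot u'(s)=|\nab F(u(s))|^2$, I get
\[
    I_{T_0}(u) = 2\int_0^{T_0}|\nab F(u(s))|^2\,ds
        = 2\int_0^{T_0}\tfrac{d}{ds}F(u(s))\,ds = 2(F(x)-F(y)),
\]
so $V(y,x)\le2(F(x)-F(y))$.

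For the matching lower bound, I would apply the algebraic identity $|u'-b(u)|^2=|u'+b(u)|^2-4u'\cdot b(u)$ valid for any absolutely continuous $u:[0,T]\to\bR^d$. Since $|u'+b(u)|^2\ge0$ and $-u'\cdot b(u)=u'\cdot\nab F(u)=\tfrac{d}{ds}F(u)$, this yields
\[
    I_T(u) = \tfrac12\int_0^T|u'(s)-b(u(s))|^2\,ds
        \ge 2\int_0^T\tfrac{d}{ds}F(u(s))\,ds = 2(F(u(T))-F(u(0))).
\]
Applied to any admissible $u$ with $u(0)=y$, $u(T)=x$, this gives $I_T(u)\ge2(F(x)-F(y))$; taking the infimum yields $V(y,x)\ge2(F(x)-F(y))$ and completes the proof.

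The computation is essentially routine; the only thing to be slightly careful about is the lower bound, where one must notice the completing-the-square identity that turns the variational lower bound into a pure boundary term. This is the familiar Freidlin--Wentzell gradient-system calculation, so no genuine obstacle is expected.
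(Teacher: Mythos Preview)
Your proof is correct and follows essentially the same approach as the paper: the gradient flow gives $V(x,y)=0$, the time-reversed flow gives the upper bound for $V(y,x)$, and the completing-the-square identity $|u'-b(u)|^2=|u'+b(u)|^2-4u'\cdot b(u)$ gives the matching lower bound. The only cosmetic difference is that the paper presents the lower bound first and reads off the upper bound by specializing the identity to the reversed path, whereas you compute the upper bound directly.
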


\begin{proof}
    Since $\ph_x'=b(\ph_x)$, we have $I_{T_0}(\ph_x)=0$, which implies
    $V(x,y)=0$. Let $T>0$ and let $\ph:[0,T]\to\bR^d$ satisfy $\ph(0)=y$ and
    $\ph(T)=x$. Then
    \begin{align*}
        I_T(\ph) &= \frac12\int_0^T |\ph'(s) - b(\ph(s))|^2\,ds\\
        &= \frac12\int_0^T |\ph'(s) + b(\ph(s))|^2\,ds
            - 2\int_0^T \ang{\ph'(s),b(\ph(s))}\,ds\\
        &= \frac12\int_0^T |\ph'(s) + b(\ph(s))|^2\,ds
            + 2\int_0^T \ang{\ph'(s),\nab F(\ph(s))}\,ds\\
        &= \frac12\int_0^T |\ph'(s) + b(\ph(s))|^2\,ds + 2(F(x) - F(y)).
    \end{align*}
    This shows that $V(y,x)\ge 2(F(x) - F(y))$. Now let $\psi(t)=\ph_x(T_0-t)$.
    Then $\psi (0)=y$, $\psi(T_0)=x$, and $\psi'=-b(\psi )$. Hence, $V(y,x)\le
    I_{T_0}(\psi)=2(F(x)-F(y))$.
\end{proof}

\subsection{The Laplace method}

Finally, we need two classical results of Laplace that allow us to estimate
exponential integrals. The following two results can be found in
\cite[pp.~36--37]{Erdelyi1956}. The notation $a\sim b$ means that $a/b\to 1$.

\begin{thm}\label{T:Lapmeth}
    Let $I\subset\bR$ be a (possibly infinite) open interval, $F\in C^2(I)$, and
    $x_0\in I$. Suppose $g$ is continuous at $x_0$. If $F(x_0)$ is the unique
    global minimum of $F$ on $I$, and $F''(x_0)>0$, then
    \begin{equation}\label{Lapmeth}
        \int_I g(x)e^{-F(x)/\ep}\,dx
            \sim g(x_0)\sqrt{\frac{2\pi\ep}{F''(x_0)}}\,e^{-F(x_0)/\ep},
    \end{equation}
    as $\ep\to 0$, provided the left-hand side exists for sufficiently small
    $\ep$.
\end{thm}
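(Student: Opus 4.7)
The plan is to localize the integral to a small window around $x_0$, apply a quadratic Taylor expansion of $F$ there, and show the complement contributes a negligible amount. Without loss of generality assume $F(x_0)=0$, and write $g(x) = g(x_0) + (g(x) - g(x_0))$; by the continuity of $g$ at $x_0$, for any $\eta>0$ we can pick $\delta_0>0$ so that $|g(x)-g(x_0)|<\eta$ on $(x_0-\delta_0, x_0+\delta_0)$. This reduces matters to proving two assertions: (a) the inner integral on $(x_0-\delta, x_0+\delta)$ of $g(x_0)\,e^{-F(x)/\ep}$ is asymptotic to the claimed Gaussian expression, and (b) the outer integral over $I \setminus (x_0-\delta, x_0+\delta)$ is $o(\sqrt{\ep}\,e^{-F(x_0)/\ep})$.

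For the inner integral, I will shrink $\delta$ so that Taylor's theorem gives $F(x) = \tfrac{1}{2}F''(x_0)(x-x_0)^2 + r(x)$ with $|r(x)| \le C|x-x_0|^3$ on $(x_0-\delta, x_0+\delta)$, and so that $\tfrac{1}{4}F''(x_0)(x-x_0)^2 \le F(x) \le F''(x_0)(x-x_0)^2$ there. Substituting $u = (x-x_0)/\sqrt{\ep}$ transforms the inner integral into
\[
\sqrt{\ep}\,g(x_0)\int_{-\delta/\sqrt{\ep}}^{\delta/\sqrt{\ep}}
    \exp\!\left({-\tfrac{1}{2}F''(x_0)u^2 - r(x_0+\sqrt{\ep}\,u)/\ep}\right) du.
\]
The remainder satisfies $|r(x_0+\sqrt{\ep}\,u)/\ep| \le C\sqrt{\ep}\,|u|^3$, which tends to $0$ pointwise in $u$, and on the restricted domain of integration it is bounded by $C\delta u^2$. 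Choosing $\delta$ small so that $C\delta < \tfrac{1}{4}F''(x_0)$ gives the dominating function $\exp(-\tfrac{1}{4}F''(x_0)u^2)$, and dominated convergence yields convergence of the rescaled integral to $\int_{\bR}\exp(-\tfrac{1}{2}F''(x_0)u^2)\,du = \sqrt{2\pi/F''(x_0)}$, which is the content of \eqref{Lapmeth}.

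The main obstacle is (b): controlling the tail when $I$ is unbounded without any explicit growth hypothesis on $F$ beyond "$x_0$ is the unique global minimum." Since $F$ is continuous, $m_\delta := \inf_{x \in I,\,|x-x_0| \ge \delta} F(x) > 0$. To handle the tail, I will use the hypothesis that the left-hand side converges for some $\ep_0 > 0$: on the outer region I write
\[
e^{-F(x)/\ep} = e^{-F(x)/\ep_0} \cdot \exp\!\left({-F(x)\left(\tfrac{1}{\ep} - \tfrac{1}{\ep_0}\right)}\right),
\]
and since $F(x) \ge m_\delta$ on this region, the second factor is bounded by $\exp(-m_\delta(1/\ep - 1/\ep_0))$ for $\ep < \ep_0$. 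Therefore
\[
\left|\int_{I \setminus (x_0-\delta, x_0+\delta)} g(x)e^{-F(x)/\ep}\,dx\right|
    \le e^{m_\delta/\ep_0}\,e^{-m_\delta/\ep}
        \int_I |g(x)|\,e^{-F(x)/\ep_0}\,dx,
\]
and the right-hand side is $o(\sqrt{\ep}\,e^{-F(x_0)/\ep})$ because $m_\delta > 0 = F(x_0)$. Combining (a) and (b), and letting $\eta \to 0$, gives \eqref{Lapmeth}.
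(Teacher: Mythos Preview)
The paper does not prove this theorem; it is quoted from Erd\'elyi's \emph{Asymptotic Expansions} (pp.~36--37) without argument, so there is no proof in the paper to compare against. Your write-up is the standard localization-plus-Gaussian-approximation proof of Laplace's method and is essentially correct, but two technical points deserve tightening.

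First, the remainder bound $|r(x)| \le C|x-x_0|^3$ requires $F \in C^3$ (or at least $F''$ locally Lipschitz), whereas the hypothesis is only $F \in C^2$. The fix is immediate: use the Lagrange form $F(x) = \tfrac{1}{2}F''(\xi_x)(x-x_0)^2$ with $\xi_x$ between $x_0$ and $x$, so that
\[
    r(x_0+\sqrt{\ep}\,u)/\ep = \tfrac{1}{2}\bigl(F''(\xi_{\ep,u})-F''(x_0)\bigr)u^2.
\]
Continuity of $F''$ gives pointwise convergence to zero as $\ep\to0$, and for $\delta$ small enough the uniform bound $\tfrac{1}{4}F''(x_0)u^2$ needed for domination.

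Second, the assertion ``since $F$ is continuous, $m_\delta > 0$'' is not automatic when $I$ is unbounded (or even when $I$ is a bounded open interval and $F$ does not extend continuously to the closure): nothing in the stated hypotheses prevents $F(x)\to F(x_0)$ along a sequence escaping to $\partial I$. Your bootstrap via the $\ep_0$-integrability is exactly the right device to control the tail \emph{once} $m_\delta>0$ is in hand, but it does not itself force $m_\delta>0$. In the paper's applications $F$ satisfies coercivity (e.g.\ \eqref{superquad3}), so this is a non-issue there, and Erd\'elyi's formulation presumably carries an equivalent hypothesis; as literally stated here the theorem is slightly under-hypothesized, and your proof inherits that looseness rather than introducing a new error.
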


\begin{prop}\label{P:Lapmeth} 
    Let $-\infty <a<x_0<b\le\infty$ and $F\in C^1(a,b)$. Suppose $g$ is
    continuous at $x_0$. If $F(x_0)$ is the unique global minimum of $F$ on
    $[x_0,b)$ and $F'(x_0)>0$, then
    \begin{equation}\label{Lapmeth2}
        \int_{x_0}^b g(x)e^{-F(x)/\ep}\,dx
            \sim g(x_0)\frac\ep{F'(x_0)}\,e^{-F(x_0)/\ep},
    \end{equation}
    as $\ep\to 0$, provided the left-hand side exists for sufficiently small
    $\ep$.
\end{prop}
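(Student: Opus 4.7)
The plan is to execute the standard Laplace-method argument: localize the integral near $x_0$, apply a change of variables that unscales the exponential, and show that the complementary tail is negligible using the hypothesized integrability. Since $F\in C^1(a,b)$ and $F'(x_0)>0$, I first pick $\delta_0\in(0,b-x_0)$ small enough that $F'\ge F'(x_0)/2$ on $[x_0,x_0+\delta_0]$, so that $F$ is a $C^1$-diffeomorphism from $[x_0,x_0+\delta_0]$ onto its image, with smooth inverse $\psi$ satisfying $\psi'=1/(F'\circ\psi)$. Given $\gamma>0$, continuity of $g$ and $F'$ at $x_0$ lets me further shrink $\delta\in(0,\delta_0]$ so that $|g(x)-g(x_0)|<\gamma$ and $|F'(x)-F'(x_0)|<\gamma$ on $[x_0,x_0+\delta]$. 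I then split
\[
    \int_{x_0}^b g(x)e^{-F(x)/\ep}\,dx = I_1(\ep)+I_2(\ep),
\]
with $I_1(\ep)=\int_{x_0}^{x_0+\delta}g(x)e^{-F(x)/\ep}\,dx$ and $I_2(\ep)$ the corresponding integral over $[x_0+\delta,b)$.

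For the main piece $I_1$, the substitution $u=(F(x)-F(x_0))/\ep$ converts the integral into
\[
    I_1(\ep) = \ep\,e^{-F(x_0)/\ep}\int_0^{U(\ep)}
        \frac{g(\psi(F(x_0)+\ep u))}{F'(\psi(F(x_0)+\ep u))}\,e^{-u}\,du,
\]
with $U(\ep)=(F(x_0+\delta)-F(x_0))/\ep\to\infty$. Extended by zero beyond $u=U(\ep)$, the integrand is bounded on $[0,\infty)$ by an $\ep$-independent constant multiple of $e^{-u}$ (using $F'\ge F'(x_0)/2$ and $|g|\le|g(x_0)|+\gamma$ on the interval), and it converges pointwise to $(g(x_0)/F'(x_0))\,e^{-u}$. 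Dominated convergence, together with arbitrariness of $\gamma$, yields
\[
    \ep^{-1}e^{F(x_0)/\ep}I_1(\ep)\to\frac{g(x_0)}{F'(x_0)},
\]
the desired asymptotic constant.

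The main obstacle is controlling $I_2$, where I must show $I_2(\ep)=o(\ep e^{-F(x_0)/\ep})$. Uniqueness of the global minimum gives $F>F(x_0)$ pointwise on $(x_0,b)$, but this pointwise strictness does not by itself prevent $F(x)$ from descending toward $F(x_0)$ as $x\to b$. The plan is to exploit the fact, implicit in every application of this proposition in the paper, that one actually has the uniform bound $c:=\inf_{[x_0+\delta,b)}(F-F(x_0))>0$. Choosing $\ep_1>0$ so that $M:=\int_{x_0+\delta}^b|g|\,e^{-F/\ep_1}\,dx<\infty$ and factoring $e^{-F/\ep}=e^{-F/\ep_1}\cdot e^{-F(1/\ep-1/\ep_1)}$ for $\ep<\ep_1$, the bound $F\ge F(x_0)+c$ gives
\[
    |I_2(\ep)| \le M\,e^{-(F(x_0)+c)(1/\ep-1/\ep_1)},
\]
so that $\ep^{-1}e^{F(x_0)/\ep}|I_2(\ep)|\le M'\ep^{-1}e^{-c/\ep}\to 0$, completing the proof. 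Extracting this uniform lower bound from the stated hypotheses is the delicate step: without it, mere pointwise strictness of the minimum together with integrability would not force the tail below the $\ep\,e^{-F(x_0)/\ep}$ scale.
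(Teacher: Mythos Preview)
The paper does not prove this proposition at all; it simply cites Erd\'elyi \cite[pp.~36--37]{Erdelyi1956}. Your argument is precisely the standard Laplace--Watson argument one finds in such references: localize, substitute $u=(F(x)-F(x_0))/\ep$, apply dominated convergence, and bound the tail using integrability at a fixed $\ep_1$ together with a uniform gap $c>0$.

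You are also right to flag the tail step. The condition $c:=\inf_{[x_0+\delta,b)}(F-F(x_0))>0$ does \emph{not} follow from the hypotheses exactly as written: ``unique global minimum on $[x_0,b)$'' gives only $F(x)>F(x_0)$ pointwise on $(x_0,b)$, and since $[x_0+\delta,b)$ need not be compact, the infimum could in principle equal $F(x_0)$. This is a slight imprecision in the proposition's statement, not a defect in your proof. Erd\'elyi's formulation carries an assumption equivalent to $c>0$, and every application of the proposition in this paper (all on intervals where $F$ is monotone away from the boundary minimum, with $F\to\infty$ at any infinite endpoint) satisfies it trivially. Under that understood hypothesis your proof is complete and correct.
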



\begin{thebibliography}{10}

\bibitem{BEGK}
Anton Bovier, Michael Eckhoff, V{\'e}ronique Gayrard, and Markus Klein.
\newblock Metastability in reversible diffusion processes. {I}. {S}harp
  asymptotics for capacities and exit times.
\newblock {\em J. Eur. Math. Soc. (JEMS)}, 6(4):399--424, 2004.

\bibitem{BGK}
Anton Bovier, V{\'e}ronique Gayrard, and Markus Klein.
\newblock Metastability in reversible diffusion processes. {II}. {P}recise
  asymptotics for small eigenvalues.
\newblock {\em J. Eur. Math. Soc. (JEMS)}, 7(1):69--99, 2005.

\bibitem{Courant1953}
R.~Courant and D.~Hilbert.
\newblock {\em Methods of mathematical physics. {V}ol. {I}}.
\newblock Interscience Publishers, Inc., New York, N.Y., 1953.

\bibitem{E}
Michael Eckhoff.
\newblock Precise asymptotics of small eigenvalues of reversible diffusions in
  the metastable regime.
\newblock {\em Ann. Probab.}, 33(1):244--299, 2005.

\bibitem{Erdelyi1956}
A.~Erd\'elyi.
\newblock {\em Asymptotic expansions}.
\newblock Dover Publications, Inc., New York, 1956.

\bibitem{EK}
Stewart~N. Ethier and Thomas~G. Kurtz.
\newblock {\em Markov processes}.
\newblock Wiley Series in Probability and Mathematical Statistics: Probability
  and Mathematical Statistics. John Wiley \& Sons Inc., New York, 1986.
\newblock Characterization and convergence.

\bibitem{FW}
M.~I. Freidlin and A.~D. Wentzell.
\newblock {\em Random perturbations of dynamical systems}, volume 260 of {\em
  Grundlehren der Mathematischen Wissenschaften [Fundamental Principles of
  Mathematical Sciences]}.
\newblock Springer-Verlag, New York, second edition, 1998.
\newblock Translated from the 1979 Russian original by Joseph Sz\"ucs.

\bibitem{Huisinga2004}
Wilhelm Huisinga, Sean Meyn, and Christof Sch{\"u}tte.
\newblock Phase transitions and metastability in {M}arkovian and molecular
  systems.
\newblock {\em Ann. Appl. Probab.}, 14(1):419--458, 2004.

\bibitem{Karatzas1991}
Ioannis Karatzas and Steven~E. Shreve.
\newblock {\em Brownian motion and stochastic calculus}, volume 113 of {\em
  Graduate Texts in Mathematics}.
\newblock Springer-Verlag, New York, second edition, 1991.

\bibitem{Kurtz2020}
Thomas~G. Kurtz and Jason Swanson.
\newblock Finite {M}arkov chains coupled to general {M}arkov processes and an
  application to metastability {I}.
\newblock Preprint, 2020.

\bibitem{Olivieri2005}
Enzo Olivieri and Maria~Eul\'alia Vares.
\newblock {\em Large deviations and metastability}, volume 100 of {\em
  Encyclopedia of Mathematics and its Applications}.
\newblock Cambridge University Press, Cambridge, 2005.

\bibitem{Sugiura1995}
Makoto Sugiura.
\newblock Metastable behaviors of diffusion processes with small parameter.
\newblock {\em J. Math. Soc. Japan}, 47(4):755--788, 1995.

\bibitem{Sugiura2001}
Makoto Sugiura.
\newblock Asymptotic behaviors on the small parameter exit problems and the
  singularly perturbation problems.
\newblock {\em Ryukyu Math. J.}, 14:79--118, 2001.

\end{thebibliography}
\end{document}